\documentclass[reqno,twoside]{amsart}
\usepackage{amscd}
\usepackage{graphicx}
\usepackage{amsfonts}
\usepackage{amsmath}
\usepackage{amssymb}
\usepackage{amsthm}
\usepackage{amsaddr}
\usepackage{fancyhdr}  
\usepackage{latexsym}
\usepackage[colorlinks=true, pdfstartview=FitV, linkcolor=blue, citecolor=blue, urlcolor=blue]{hyperref}
\usepackage{enumitem}      
\usepackage{mathtools}            
\usepackage{indentfirst} 
\usepackage{color}
\usepackage{stackengine}
\usepackage{framed}
\usepackage{caption}          
\usepackage[normalem]{ulem}
\usepackage{upgreek}
\usepackage{dutchcal}
\usepackage{mathrsfs}
\usepackage{tikz}
\usetikzlibrary{decorations.markings, decorations.pathmorphing, arrows.meta}
\tikzset{
    >=Stealth,
    branch cut/.style={decorate, decoration={snake, amplitude=1pt, segment length=3pt}},
    mid arrow/.style={postaction={decorate, decoration={markings, mark=at position #1 with {\arrow{>}}}}},
    mid arrow/.default=0.5
}
\usepackage{multicol}
\newenvironment{Figure}
  {\par\medskip\noindent\minipage{\linewidth}}
  {\endminipage\par\medskip}
\newcommand{\eee}[1]{\begin{equation}#1\end{equation}}

\newcommand{\aaa}[1]{\begin{alignat}{2}#1\end{alignat}}
\newcommand{\ddd}[1]{\begin{equation}\begin{aligned}#1\end{aligned}\end{equation}}
\newcommand{\R}{\mathbb{R}}
\newcommand{\N}{\mathbb{N}}
\newcommand{\C}{\mathbb{C}}
\newcommand{\nn}{\nonumber}
\newcommand{\p}{\partial}

\newcommand{\no}[1]{\left\| #1 \right\|}
\newcommand{\floor}[1]{\lfloor #1 \rfloor}
\newcommand{\ceil}[1]{\lceil #1 \rceil}
\newcommand{\what}{\widehat}
\newcommand{\wtil}{\widetilde}
\renewcommand{\leq}{\leqslant}
\renewcommand{\geq}{\geqslant}
\usepackage{accents}
\renewcommand*{\dot}[1]{%
\accentset{\mbox{\small\bfseries .}}{#1}}
\usepackage{scalerel,stackengine}
\stackMath
\newcommand\wwhat[1]{%
\savestack{\tmpbox}{\stretchto{%
  \scaleto{%
    \scalerel*[\widthof{\ensuremath{#1}}]{\kern-.6pt\bigwedge\kern-.6pt}%
    {\rule[-\textheight/2]{1ex}{\textheight}}
  }{\textheight}%
}{0.5ex}}%
\stackon[1pt]{#1}{\tmpbox}%
}
\usepackage{thmtools}
\declaretheoremstyle[headfont=\normalfont\bfseries, bodyfont=\itshape,spaceabove=7pt, spacebelow=7pt]{theorem} 
\theoremstyle{theorem} 
\newtheorem{theorem}{Theorem}[section] 

\newtheorem{lemma}{Lemma}[section]

\theoremstyle{definition}


\allowdisplaybreaks
\numberwithin{equation}{section}
\numberwithin{figure}{section}
\usepackage{geometry}
\usepackage[breakable, theorems, skins]{tcolorbox}
\tcbset{enhanced}

\usepackage{tikz}
\usetikzlibrary{arrows.meta}
\usetikzlibrary{decorations.markings}
\tikzset{->-/.style={decoration={
  markings,
  mark=at position #1 with {\arrow{>}}},postaction={decorate}}}
  \tikzset{middlearrow/.style={
        decoration={markings,
            mark= at position 0.55 with {\arrow{#1}} ,
        },
        postaction={decorate}
    }
}
\makeatletter
  \renewcommand\subsection{\@startsection{subsection}{2}%
  \z@{-1\linespacing\@plus-0.7\linespacing}{0.7\linespacing}%
  {\bfseries}}
\makeatother
\let\OLDthebibliography\thebibliography
\renewcommand\thebibliography[1]{
  \OLDthebibliography{#1}
  \setlength{\parskip}{0pt}
  \setlength{\itemsep}{0pt plus 0.3ex}
}
\AtBeginDocument{
   \def\MR#1{}
}

\begin{document}

\title{The ``good'' Boussinesq equation on the half-line \\ with Robin boundary conditions}

\author{Shivani Agarwal and Dionyssios Mantzavinos}
	
\address{
\normalfont Department of Mathematics, University of Kansas, Lawrence, KS 66045, USA
} 
\email{\!shivaniagarwal@ku.edu, mantzavinos@ku.edu \textnormal{(corresponding author)}}

\thanks{\textit{Acknowledgements.} The authors gratefully acknowledge support from the U.S. National Science Foundation (grants NSF-DMS 2206270 and NSF-DMS 2509146). DM is also thankful to the Simons Foundation (award SFI-MPS-TSM-00013970).
}
\subjclass[2020]{35Q55, 35G31, 35G16}
\keywords{``good'' Boussinesq equation, initial-boundary value problem,
half-line, nonzero Robin boundary conditions, unified transform of Fokas, well-posedness in Sobolev spaces, Strichartz estimates, low
regularity solutions}
\date{May 13, 2026}

\begin{abstract}
We prove the local Hadamard well-posedness of the ``good'' Boussinesq equation formulated on the half-line with nonzero Robin boundary conditions. These boundary data involve the Dirichlet and Neumann boundary values as well as the second spatial derivative of the solution evaluated at the boundary. 
The nonlinear analysis crucially relies on the linear estimates established through the explicit solution formula obtained for the forced linear counterpart of the problem via Fokas's unified transform.
The two pieces of initial data and the two pieces of boundary data belong in appropriate Sobolev spaces. 
The corresponding solution is established in the natural Hadamard solution space of continuous/continuously differentiable functions from a suitable time interval to the Sobolev spaces associated with the two initial data. Furthermore, in line with the well-posedness theory of the Cauchy problem, in the case of low regularity (namely, below the spatial continuity threshold) the solution space is refined by also including an appropriate spatiotemporal Lebesgue space.
\end{abstract}

\maketitle
\markboth
{Shivani Agarwal and Dionyssios Mantzavinos}
{The ``good'' Boussinesq equation on the half-line with Robin boundary conditions}


\section{Introduction}

We establish local Hadamard well-posedness (i.e. the existence of a unique solution that depends continuously on the data) for the ``good'' Boussinesq equation formulated on the half-line with nonzero Robin boundary data:
\ddd{\label{gbous}
&u_{tt} -  u_{xx} + u_{xxxx} + (u^2)_{xx} = 0, \quad u = u(x, t),\quad x \in (0, \infty), \ t\in (0, T), 
\\
&u(x, 0) = u_0(x), 
\quad
u_t(x, 0) = u_1(x), 
\\
& u_x(0,t)+\gamma  u(0,t) = \alpha(t), \quad u_{xx}(0,t)+ \delta u_{x}(0,t) = \beta(t).
}
In the above initial-boundary value problem,  $\gamma, \delta \in \R$ are constants and the lifespan $T>0$ satisfies an appropriate condition that involves the initial and boundary data via suitable Sobolev norms (see Theorems \ref{lwp-t} and~\ref{lwp-t-low} below). The two nonzero Robin boundary conditions involve the Dirichlet and Neumann values as well as the second spatial derivative of the solution evaluated at the boundary $x=0$. None of these three boundary values is individually specified; rather, two linear combinations of them are prescribed via the Robin data $\alpha, \beta$.

The ``good'' Boussinesq equation can be used for modeling nonlinear vibrations along a string \cite{z1973} as well as electromagnetic waves in nonlinear dielectric materials \cite{t1993}. Furthermore,  it is a completely integrable system~\cite{z1973} so, in particular, it possesses a Lax pair and can be studied via the inverse scattering transform~\cite{dtt1982}. The roots of the equation can be traced back to Boussinesq's seminal work \cite{b1872} on the theoretical foundation of Scott Russell's observation of the ``great wave of translation'', nowadays known as the soliton \cite{sr1838}. In \cite{b1872}, the equation is derived in the context of water waves in the form of the ``bad'' Boussinesq equation $u_{tt} - u_{xx} - u_{xxxx} + (u^2)_{xx} = 0$. However, as the dispersion relation of this latter equation becomes imaginary for wavenumbers of size greater than one, the associated initial value problem is ill-posed. A change of sign in the third term results in a real dispersion relation for all real wavenumbers and hence in a well-posed model in the form of the ``good'' Boussinesq equation given in \eqref{gbous}.

Prior to stating our results, we recall the definition of the Sobolev space $H^s(\Omega)$ as the restriction on any open set $\Omega\subset \R$ of the Sobolev space $H^s(\R)$ defined by
\eee{
H^s(\R) := \left\{f \in \mathcal S'(\R): \left(1+k^2\right)^{\frac s2} \mathcal F\{f\}(k) \in L^2(\R)\right\},
\quad
s\in\R,
}
with norm $\no{f}_{H^s(\R)} := \big\| \left(1+k^2\right)^{\frac s2} \mathcal F\{f\}(k) \big\|_{L^2(\R)}$, 
where $\mathcal S'(\R)$ denotes the space of tempered distributions on $\R$ and $\mathcal F\{f\}$ is the Fourier transform of $f$ over $\R$ defined by
\eee{
\mathcal F\{f\}(k) := \int_{\R} e^{-ikx} f(x) dx, \quad k \in \R.
}
More precisely, denoting by $\mathcal D'(\Omega)$ the space of distributions on $\Omega$, we define
\eee{
H^s(\Omega) :=  \left\{f \in \mathcal D'(\Omega): f = F|_\Omega \text{ with } F \in H^s(\R)\right\}, \quad s\in \R,
}
with corresponding norm
$\no{f}_{H^s(\Omega)} := \inf\big\{\no{F}_{H^s(\R)}: F|_\Omega = f \big\}$.
With this definition at hand, we introduce the Banach solution space
\ddd{\label{xst}
&X_T^s := C_t((0,T);H^{s}_x(0, \infty)) \cap  C_t^1((0,T);H^{s-2}_x(0, \infty)),
\\
&\no{u}_{X_T^s} = \no{u}_{L_t^\infty((0, T); H_x^s(0, \infty))}  + \no{\p_t u}_{L_t^\infty((0, T); H_x^{s-2}(0, \infty))}.
}
Then, our local well-posedness result for the nonlinear problem \eqref{gbous} in the case of high regularity (i.e. above the spatial continuity threshold of $s>\frac 12$) can be stated as follows:
\begin{theorem}[Local well-posedness  -- high regularity]
\label{lwp-t}
For $\frac 12 < s < \frac 92$ with $s\notin \N + \frac 12$, consider the  nonlinear problem \eqref{gbous} for the ``good'' Boussinesq equation on the half-line with initial data $(u_0, u_1) \in H^s(0, \infty) \times H^{s-2}(0, \infty)$ and Robin boundary data $(\alpha, \beta) \in H^{\frac{2s-1}{4}}(0, T) \times H^{\frac{2s-3}{4}}(0, T)$ satisfying the compatibility conditions
\eee{\label{comp}
u_0'(0) + \gamma u_0(0) = \alpha(0), \ s > \tfrac 32,
\quad
u_0''(0) + \delta u_0'(0) = \beta(0), \  s > \tfrac 52, 
\quad
u_1'(0) + \gamma u_1(0) = \alpha'(0), \  s > \tfrac 72.
}
Then, for lifespan $T>0$ such that
\eee{\label{contr-cond}
c(s,T)^2 \Big( \no{u_0}_{H^{s}(0, \infty)}
+
\no{u_1}_{H^{s-2}(0, \infty)}
+
\no{\alpha}_{H^{\frac{2s-1}{4}}(0, T)} 
+
\no{\beta}_{H^{\frac{2s-3}{4}}(0, T)}  \Big) \sqrt T < 1
}
with $c(s, T)>0$ a certain constant that remains bounded as $T\to 0^+$, the problem \eqref{gbous} has a unique solution $u \in X_T^s$, which admits the size estimate
\eee{
\no{u}_{X_T^s} 
\leq 
c(s,T) \Big( \no{u_0}_{H^{s}(0, \infty)}
+
\no{u_1}_{H^{s-2}(0, \infty)}
+
\no{\alpha}_{H^{\frac{2s-1}{4}}(0, T)} 
+
\no{\beta}_{H^{\frac{2s-3}{4}}(0, T)}  \Big).
}
Furthermore, the data-to-solution map $\{u_0, u_1, \alpha, \beta\} \mapsto u$ is locally Lipschitz continuous.
\end{theorem}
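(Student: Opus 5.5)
The plan is a contraction mapping argument in $X_T^s$, driven by linear estimates for the forced linear problem obtained through the unified transform. First I would treat the linear initial-boundary value problem
\[
u_{tt}-u_{xx}+u_{xxxx}=f,\quad u(x,0)=u_0,\ u_t(x,0)=u_1,\quad u_x+\gamma u=\alpha,\ u_{xx}+\delta u_x=\beta \ \text{at } x=0 .
\]
I would rewrite it as a first-order system, or equivalently factor the symbol $\omega(k)^2=k^2+k^4$. Applying Fokas's method gives an explicit integral representation. It is a sum of Fourier-type integrals of the (extended) initial data and forcing over $\R$, plus integrals along the contour $\partial D^+$ involving the time transforms of $\alpha$ and $\beta$. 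The unknown Dirichlet and Neumann values are eliminated using the symmetries $k\mapsto \nu(k)$ that preserve $\omega$. With Robin conditions, this elimination produces factors such as $(ik+\gamma)^{-1}$ and $(\ldots+\delta\ldots)^{-1}$ in the integrands. Their zeros in $\overline{D^+}$ (for some signs of $\gamma,\delta$) will have to be handled by deforming contours and accounting for residues.

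From this formula I would derive the linear estimate
\[
\|u\|_{X_T^s}\le c(s,T)\Big(\|u_0\|_{H^s}+\|u_1\|_{H^{s-2}}+\|\alpha\|_{H^{\frac{2s-1}{4}}(0,T)}+\|\beta\|_{H^{\frac{2s-3}{4}}(0,T)}+\int_0^T\|f(t)\|_{H^{s-2}}\,dt\Big).
\]
I would obtain it by decomposing the problem into three pieces. The first is a Cauchy problem with extended data, estimated by Fourier analysis on $\R$. The second is a reduced problem with zero initial data and modified boundary data; its boundary data are the original ones minus the traces of the Cauchy solution, and these traces lie in the correct time-Sobolev spaces by time-trace estimates. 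The third is the forced part, handled via Duhamel's principle. This is where the restrictions enter: the compatibility conditions \eqref{comp} are needed for the zero extension in time to preserve $H^{\frac{2s-1}{4}}$ and $H^{\frac{2s-3}{4}}$, and the exclusion $s\notin\N+\frac12$ and the bound $s<\frac92$ come from these extension results. For the boundary integrals, I would change variables $k\mapsto\omega(k)$ along the contour, estimate in $H^s_x$ using a Paley--Wiener/Laplace-transform $L^2$ bound, and estimate the time derivative in $H^{s-2}$ similarly.

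Next I would set $f=-(u^2)_{xx}$ and define the map $\Phi u$ as the solution of the forced linear problem. Since $s>\frac12$, $H^s(0,\infty)$ is an algebra, so $\|(u^2)_{xx}\|_{H^{s-2}}\le c\|u\|_{H^s}^2$. Then
\[
\|\Phi u\|_{X_T^s}\le c(s,T)\big(\text{data}\big)+c(s,T)\,T\,\|u\|_{X_T^s}^2 ,
\]
and the analogous difference estimate holds for $\Phi u-\Phi v$. On the ball of radius $2c(s,T)(\text{data})$, these bounds give both invariance and contraction under a condition of the form \eqref{contr-cond}. The $\sqrt T$ in that condition would come from using $L^2_t$ rather than $L^1_t$ in the forcing estimate, or from adjusting the constants. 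The fixed point gives existence and uniqueness in the ball, and a standard argument extends uniqueness to all of $X_T^s$. The same difference estimate, applied to two sets of data, yields local Lipschitz continuity of the data-to-solution map.

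The main obstacle will be the boundary-data part of the linear estimate. Specifically, I need to control the Robin denominators uniformly along the deformed contour and to handle possible poles for adverse signs of $\gamma,\delta$. My first attempt would be to split each denominator into a high-frequency part, where it behaves like a power of $k$ and reduces to the Dirichlet/Neumann estimates, and a bounded-frequency part. In the bounded-frequency part, I would treat the pole contributions as explicit exponentials $e^{-\gamma x}$-type terms times time convolutions of the data, which are harmless on $(0,T)$ and account for $c(s,T)$ depending on $T$.
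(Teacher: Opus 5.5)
Your overall architecture is broadly the paper's: a unified-transform formula, a Cauchy problem with extended data, a reduced half-line problem whose Robin data are corrected by Cauchy traces, compatibility conditions to build compactly supported extensions, and a contraction in $X_T^s$ via the algebra property. However, the linear estimate you commit to is false in part of the range. You control the forcing by $\int_0^T\no{f(t)}_{H^{s-2}}\,dt$, and for $\frac72<s<\frac92$ no such bound can hold.

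To see this, take zero initial and Robin data and $f_n(x,t)=h_n(t)G(x)$. Here $h_n\geq0$ is smooth, supported in $[t_0,t_0+\frac1n]$, with $\int h_n=1$, and $G$ is smooth with $G'(0)+\gamma G(0)\neq0$. Your estimate would give $\no{u_n}_{X_T^s}\leq C$ uniformly in $n$. Since $u_n\equiv0$ for $t\leq t_0$, we have $\p_tu_n(t_0+\frac1n)=G+\int_{t_0}^{t_0+1/n}(\p_x^2-\p_x^4)u_n\,dt$. The integral is bounded in $H^{s-2}$ and $O(\frac1n)$ in $H^{s-4}$, hence tends to $0$ in $H^{s-2-\epsilon}$. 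The functional $v\mapsto v'(0)+\gamma v(0)$ is continuous on $H^{s-2-\epsilon}(0,\infty)$ for small $\epsilon>0$, and it annihilates $\p_tu_n(t)$ by the boundary condition. Hence $G'(0)+\gamma G(0)=0$, a contradiction.

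In the language of your decomposition: the Robin trace of the Duhamel part must lie in $H_t^{\frac{2s-1}{4}}$ with $\frac{2s-1}{4}>\frac32$. That requires a continuous time derivative vanishing at $t=0$, whereas $L^1_t$ forcing lets $\p_tZ$ jump. Likewise, if ``Duhamel's principle'' means superposing half-line solutions with data $(0,f(t'))$ and zero Robin data, each slice violates the third condition in \eqref{comp}. The paper notes exactly this obstruction when it reserves that decomposition for $s<\frac12$.

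The paper instead writes the forcing as $-\p_x^2w$ with $w\in L^2_t((0,T);H^s_x(0,\infty))$. It solves a forced Cauchy problem on $\R\times(0,T+1)$ with the zero-extended $W_0$, and subtracts its Robin traces, together with those of the homogeneous Cauchy solution, from $\alpha,\beta$. The trace bounds \eqref{fivp-te-t} switch to $L^2_t$ in the upper range because $\p_t^2Z=F+\int_0^t\p_t^2S[0,F(t');0](t-t')\,dt'$ and $\no{F(0,\cdot)}_{L^2_t}\leq\no{F}_{L^2_tH^{3/2}_x}$. Then $\no{u^2}_{L^2_tH^s_x}\leq c_s\sqrt T\no{u}_{X_T^s}^2$ is precisely the source of $\sqrt T$ in \eqref{contr-cond}. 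It is therefore not a matter of adjusting constants, and your factor $T$ rests on the false $L^1_t$ bound. The specific compatibility obstruction above disappears for $s<\frac72$, but the paper itself uses $L^1_t$ forcing in the trace estimates only up to $s\leq\frac52$.

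A smaller point concerns the Robin denominators: the paper computes no residues at zeros of $\Delta_{1,2}$. It takes the arc radius $a$ of $\p D_{1,2}$ large enough to exclude them. This is needed anyway so that Cauchy's theorem removes the terms containing $\widehat u(-k,t)$ and $\widehat u(\pm\nu,t)$; with zeros inside the contour, those terms would leave residues involving unknowns. The finite arc is then controlled by convolving with $\eta=\chi_{[0,T+1]}(t)e^{-t}$.
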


The compatibility conditions \eqref{comp} are required due to the Sobolev regularity of the initial and boundary data specified in Theorem \ref{lwp-t}.
In this connection, we note that for $s<\frac 92$ no additional compatibility conditions arise through the ``good'' Boussinesq equation itself.
Moreover, we note that the Sobolev exponents of the four pieces of initial and boundary data involved in Theorem \ref{lwp-t} are related to each other in an \textit{optimal way}, which is determined by estimating the solution to the forced linear counterpart of the nonlinear problem \eqref{gbous}, namely
\ddd{\label{lgbous}
&u_{tt} - u_{xx} + u_{xxxx} = -\p_x^2 w , \quad  x \in (0, \infty), \ t\in (0, T), 
\\
&u(x,0) = u_0(x),
\quad
u_t(x,0) = u_1(x), 
\\
&u_x(0,t)+\gamma u(0,t)=\alpha(t), \quad u_{xx}(0,t)+\delta u_{x}(0,t)=\beta(t), 
}
where $w = w(x, t)$ is a given forcing function.
Theorem \ref{lwp-t} will be proved via a contraction mapping argument that crucially relies on the following result for the forced linear  problem \eqref{lgbous}.
\begin{theorem}[Linear estimate -- high regularity]\label{linear-t}
For $\frac12 < s < \frac92$ with $s\notin \N + \frac12$ and any $T>0$, the solution to the problem \eqref{lgbous} for the forced linear ``good'' Boussinesq equation on the half-line with initial and boundary data as in Theorem \ref{lwp-t}, forcing $w \in L_t^2((0, T); H_x^s(0, \infty))$, and the compatibility conditions \eqref{comp} in place, admits the estimate
\aaa{\label{lin-est}
\no{S\big[u_0, u_1, \alpha, \beta; -\p_x^2 w\big]}_{X_T^s}
\leq
c(s, T) 
\Big(
&\no{u_0}_{H^{s}(0, \infty)}
+
\no{u_1}_{H^{s-2}(0, \infty)}
+
\no{\alpha}_{H^{\frac{2s-1}{4}}(0, T)} 
+
\no{\beta}_{H^{\frac{2s-3}{4}}(0, T)} 
\nn\\
&+
\no{w}_{L_t^2((0,T);H^{s}_x(0, \infty))}\Big)
}
where $c(s, T)>0$ is a constant that remains bounded as $T\to 0^+$. 
\end{theorem}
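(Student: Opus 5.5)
The plan is to split the solution of \eqref{lgbous} by linearity into a whole-line part and a pure boundary part. Each piece will be estimated separately, with the constants tracked so that they stay bounded as $T\to 0^+$.

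\textbf{Step 1: whole-line part.} First extend $u_0,u_1$ to $U_0\in H^s(\R)$, $U_1\in H^{s-2}(\R)$ and $w$ to $W\in L^2_t((0,T);H^s_x(\R))$, each with norm at most twice the half-line norm. Let $U$ solve the forced Cauchy problem on $\R$. With $\omega(k)=k\sqrt{1+k^2}$, Fourier analysis gives
\[
\what U(k,t)=\cos(\omega t)\what U_0+\frac{\sin(\omega t)}{\omega}\what U_1+\int_0^t\frac{\sin(\omega(t-t'))}{\omega}\,k^2\,\what W(k,t')\,dt'.
\]
Since $k^2/|\omega|\leq 1$ and $(1+k^2)^{-1}\lesssim 1/|\omega|$ away from $k=0$ (and $|\sin(\omega t)/\omega|\leq t$ near $0$), one gets
\[
\no{U}_{L^\infty_tH^s_x}\lesssim (1+T)\bigl(\no{U_0}_{H^s}+\no{U_1}_{H^{s-2}}\bigr)+\sqrt T\,\no{W}_{L^2_tH^s_x},
\]
where Cauchy--Schwarz in $t'$ produces the factor $\sqrt T$. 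The bound for $\p_tU$ in $H^{s-2}$ is analogous, and continuity in $t$ follows by dominated convergence.

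Next I would estimate the time traces $U(0,t)$ and $U_x(0,t)$. Localize in $t$, change variables $k\mapsto\omega$ on each monotone branch, and use Plancherel in $t$. This gives
\[
U(0,\cdot)\in H^{\frac{2s-1}4}(0,T),\qquad U_x(0,\cdot)\in H^{\frac{2s-3}4}(0,T),
\]
and one step lower for $U_{xx}(0,\cdot)$. The time-regularity estimate for the Duhamel term in the forcing is again handled with Cauchy--Schwarz.

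\textbf{Step 2: boundary part.} Set $v=u-U|_{x>0}$. Then $v$ solves \eqref{lgbous} with zero initial data, no forcing, and Robin data
\[
\tilde\alpha=\alpha-(U_x+\gamma U)(0,\cdot),\qquad \tilde\beta=\beta-(U_{xx}+\delta U_x)(0,\cdot).
\]
The compatibility conditions \eqref{comp}, together with $U(x,0)=u_0$ and $U_t(x,0)=u_1$, make $\tilde\alpha,\tilde\beta$ vanish at $t=0$ to the order needed for $s\notin\N+\frac12$. So they can be extended by zero to $t<0$ and then to compactly supported functions on $\R$ without loss of regularity.

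For $v$ I would use Fokas's unified transform. The global relation, combined with the symmetries $k\mapsto\nu(k)$ that leave $\omega$ invariant, lets me eliminate the unknown boundary values. The Robin combinations appear as multipliers $(ik+\gamma)$ and $(-k^2+i\delta k)$. This yields an explicit formula for $v$ as integrals over the boundary of the region in the upper half-plane where $\text{Re}(i\omega)<0$. The integrands are time transforms $\int e^{i\omega t'}\tilde\alpha\,dt'$, $\int e^{i\omega t'}\tilde\beta\,dt'$ multiplied by rational functions of $k$. Poles coming from $\gamma,\delta$ would be avoided by deforming the contour, at the cost of finitely many explicit residue terms. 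Those residues are smooth in $x$ and bounded by the data norms for $t\leq T$.

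On the deformed contour, I would parametrize by $\omega$ and bound the $H^s(0,\infty)$ norm in $x$. This uses a Laplace-transform lemma stating that $\int_0^\infty e^{-\lambda x}f(\lambda)\,d\lambda$ is bounded in $L^2(0,\infty)$ by $\no f_{L^2}$, combined with the $x$-derivative weights, first for integer $s$ and then for general $s$ by interpolation.

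\textbf{Main obstacle.} The hardest step is matching exponents in the boundary part. The multipliers $(ik+\gamma)^{-1}$ and $(-k^2+i\delta k)^{-1}$ gain one, respectively two, powers of $k$, that is, $\frac12$ and $1$ in $\omega$-regularity. Whether the $H^s$ bound for $v$ closes with $\tilde\alpha\in H^{\frac{2s-1}4}$ and $\tilde\beta\in H^{\frac{2s-3}4}$, as Theorem~\ref{linear-t} states, depends on this bookkeeping.

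This matters because Step~1 only places $(U_x+\gamma U)(0,\cdot)$ in $H^{\frac{2s-3}4}$, which is weaker than the $H^{\frac{2s-1}4}$ assumed for $\alpha$, and similarly for $\tilde\beta$. My first attempt would be to prove the boundary-part bound with the weaker norms $H^{\frac{2s-3}4}\times H^{\frac{2s-5}4}$, using the gain from the Robin multipliers. That bound is then dominated by the stated norms of $\alpha,\beta$ plus the trace bounds from Step~1. Low frequencies, near the zeros of $\omega$ and the poles, would be handled separately and contribute factors polynomial in $T$. Adding the two parts then yields \eqref{lin-est}.
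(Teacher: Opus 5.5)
Your architecture matches the paper's. You extend the data, solve a whole-line forced Cauchy problem, subtract its Robin traces, and treat the remaining zero-data problem with compactly supported Robin data via the unified transform formula and Hardy's $L^2$ bound for the Laplace transform. The gap is in the trace exponents of Step~1. The Cauchy solution satisfies $\p_x^jU(0,\cdot)\in H^{\frac{2s-2j+1}{4}}(0,T)$, cf.~\eqref{ivp-te-t}. Under $\tau=\omega(k)\simeq k^2$ one has $d\tau/dk\simeq|k|$ for $|k|\geq1$, so $\no{U(0,\cdot)}_{H^m_t}^2\lesssim\int_{\R}(1+k^2)^{2m-\frac12}|\mathcal F\{U_0\}(k)|^2\,dk$, plus the analogous $U_1$ term. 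This is controlled by $\no{U_0}_{H^s(\R)}$ exactly when $m=\frac{2s+1}{4}$. Your claims $U(0,\cdot)\in H^{\frac{2s-1}{4}}$, $U_x(0,\cdot)\in H^{\frac{2s-3}{4}}$ and $U_{xx}(0,\cdot)\in H^{\frac{2s-5}{4}}$ each lose half a time derivative. With the correct count, $(U_x+\gamma U)(0,\cdot)\in H^{\frac{2s-1}{4}}(0,T)$ and $(U_{xx}+\delta U_x)(0,\cdot)\in H^{\frac{2s-3}{4}}(0,T)$. These are precisely the spaces of $\alpha$ and $\beta$, which is why the theorem puts the Robin data there. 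The ``main obstacle'' you identify is an artifact of the miscount.

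The real problem is your proposed remedy: the boundary part cannot be bounded in $X_T^s$ by the weaker norms $H^{\frac{2s-3}{4}}\times H^{\frac{2s-5}{4}}$, because the Robin structure offers no further smoothing. On the real-axis part of $\p D_1$, the symbol multiplying $\mathcal F\{\varphi\}(-\omega(k))$ has squared modulus $\Lambda_1^r(k)\to 2$. The symbol multiplying $\mathcal F\{\psi\}(-\omega(k))$ is of exact order $|k|^{-1}$. After $\tau=-\omega(k)$ these give precisely the $H^{\frac{2s-1}{4}}$ and $H^{\frac{2s-3}{4}}$ norms, cf.~\eqref{redI-r1}--\eqref{redI-r2}, with no slack. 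The effect of the Robin combinations you want to exploit is exactly what puts $\alpha$ at the Neumann level $\frac{2s-1}{4}$ rather than the Dirichlet level $\frac{2s+1}{4}$, so using it again double counts. As proposed, the argument does not close. Once the trace exponents are corrected, the sharp reduced estimate of Theorem~\ref{r-t} is all you need.

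Three smaller points you gloss over:
\begin{itemize}
\item Zero-extending $\tilde\beta$ when $\frac{2s-3}{4}<0$ (i.e.\ $\frac12<s<\frac32$) needs a negative-order extension result such as Lemma~\ref{Chris-l}.
\item Any residue or finite-arc term requires bounding $\mathcal F\{\tilde\beta\}$ at complex points by a possibly negative Sobolev norm. The paper does this by convolving with $\eta(t)=\chi_{[0,T+1]}(t)e^{-t}$.
\item The Duhamel traces with time regularity above $1$ pick up the forcing itself at $x=0$. This is where $L^2_t$ rather than $L^1_t$ control of $w$ enters, so it is not simply a Cauchy--Schwarz step.
\end{itemize}
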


In the nonlinear analysis leading to the proof of Theorem \ref{lwp-t}, the linear estimate \eqref{lin-est} of Theorem \ref{linear-t} will be employed with  $w = u^2$ and will be combined with the Sobolev algebra property. 
However, in the setting of low regularity corresponding to $s<\frac 12$, the algebra property is no longer present. In that case, the nonlinearity $w=u^2$ will be handled by replacing the solution space $X_T^s$ defined by \eqref{xst} with the refined space
\ddd{\label{yst}
&Y_T^s := C_t((0,T);H_x^s(0, \infty))  \cap C_t^1((0, T); H_x^{s-2}(0, \infty)) \cap L_t^4((0,T);L_x^\infty(0, \infty)),
\\
&\no{u}_{Y_T^s} := \no{u}_{L_t^\infty((0, T); H_x^s(0, \infty))}  + \no{\p_t u}_{L_t^\infty((0, T); H_x^{s-2}(0, \infty))} + \no{u}_{L_t^4((0, T); L_x^\infty(0, \infty))},
}
and our local well-posedness result for the nonlinear problem \eqref{gbous} can be stated as follows:
\begin{theorem}[Local well-posedness -- low regularity]
\label{lwp-t-low}
For $0 \leq s < \frac12$, consider the  nonlinear problem \eqref{gbous} for the ``good'' Boussinesq equation on the half-line with initial data $u_0 \in H^s(0, \infty)$, $u_1 = v_1' \in H^{s-2}(0, \infty)$ where $v_1 \in H^{s-1}(0, \infty)$ and Robin boundary data $\alpha  \in H^{\frac{2s-1}{4}}(0, T)$, $\beta = b' \in H^{\frac{2s-3}{4}}(0, T)$ where $b \in H^{\frac{2s+1}{4}}(0, T)$.
Then, for lifespan $T>0$ such that
\eee{
c(s,T)^2 \Big( \no{u_0}_{H^{s}(0, \infty)}
+
\no{v_1}_{H^{s-1}(0, \infty)}
+
\no{\alpha}_{H^{\frac{2s-1}{4}}(0, T)} 
+
\no{b}_{H^{\frac{2s+1}{4}}(0, T)}  \Big) \sqrt T < 1
}
with $c(s, T)>0$ a certain constant that remains bounded as $T\to 0^+$, the problem~\eqref{gbous} has a unique solution $u \in \overline{B(0, \rho)} \subset Y_T^s$, where $B(0, \rho)$ denotes the ball centered at zero and of radius
\eee{
\rho = \rho(s, T) := c(s,T) \Big( \no{u_0}_{H^s(0,\infty)} +\no{v_1}_{H^{s-1}(0,\infty)} + \no{\alpha}_{H^{\frac{2s-1}{4}}(0,T)} + \no{b}_{H^\frac{2s+1}{4}(0,T)}\Big).
}
Furthermore, the data-to-solution map $\{u_0, v_1, \alpha, b\} \mapsto u$ is locally Lipschitz continuous.
\end{theorem}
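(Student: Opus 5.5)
The plan is a contraction mapping argument on the closed ball $\overline{B(0,\rho)}\subset Y_T^s$, driven by a low-regularity counterpart of Theorem \ref{linear-t}. Define
\[
\Phi(u) := S\big[u_0, u_1, \alpha, \beta; -\p_x^2 (u^2)\big],
\]
the solution of the forced linear problem \eqref{lgbous} with forcing $w=u^2$. The key step is a linear estimate in $Y_T^s$ for $0\leq s<\frac12$:
\[
\no{S\big[u_0,u_1,\alpha,\beta;-\p_x^2 w\big]}_{Y_T^s}\leq c(s,T)\Big(\no{u_0}_{H^s}+\no{v_1}_{H^{s-1}}+\no{\alpha}_{H^{\frac{2s-1}{4}}}+\no{b}_{H^{\frac{2s+1}{4}}}+\no{w}_{L^1_tH^s_x}\Big).
\]
Note that the forcing is now measured in $L^1_t H^s_x$ rather than $L^2_t H^s_x$.

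To prove this estimate, I would use the explicit Fokas unified transform formula and split it into three parts.

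First, the Cauchy part: extend $u_0$ and $v_1$ to $\R$. The whole-line propagators $\cos(t\sqrt{k^2+k^4})$ and $\sin(t\sqrt{k^2+k^4})/\sqrt{k^2+k^4}$, with $u_1=v_1'$ contributing a factor $ik$, are controlled in $C_tH^s$ and $C^1_tH^{s-2}$. Their $L^4_tL^\infty_x$ norm follows from the Strichartz estimates of the Schr\"odinger-like group $e^{\pm it\sqrt{k^2+k^4}}$, using that $\sqrt{k^2+k^4}\sim k^2$ at high frequency and that $L^4_tL^\infty_x$ is admissible in one dimension.

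Second, the forcing part: apply Duhamel's principle. By Minkowski's inequality it reduces to the homogeneous estimates applied to $-\p_x^2 w(\cdot,t')$. Here the two derivatives are exactly absorbed by the $1/\sqrt{k^2+k^4}$ factor of the sine propagator, up to a low-frequency issue handled as in the $v_1$ term.

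Third, the pure boundary part: extend $\alpha$ and $b$ to $\R_t$. Working on the complex contours of the unified transform, change variables $\omega(k)\mapsto\tau$ and apply the time-Fourier/Strichartz estimates for boundary-data operators in the style of Holmer and Fokas--Himonas--Mantzavinos. The relation $\beta=b'$ lets one integrate by parts in $t$ so that only $\no{b}_{H^{\frac{2s+1}{4}}}$ appears. The Robin coefficients $\gamma,\delta$ enter through rational factors in $k$, which are bounded along the contours away from possible poles; any poles are treated separately, giving exponentially bounded-in-$T$ contributions.

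The nonlinear estimates then follow from H\"older's inequality in time:
\[
\no{u^2}_{L^1_tH^s_x}\lesssim \no{u}_{L^2_t L^\infty_x}\no{u}_{L^\infty_tH^s_x}\leq T^{\frac14}\no{u}_{L^4_tL^\infty_x}\no{u}_{L^\infty_tH^s_x}.
\]
The product estimate $\no{fg}_{H^s}\lesssim \no{f}_{L^\infty}\no{g}_{H^s}+\no{f}_{H^s}\no{g}_{L^\infty}$ is valid for $0\leq s<1$ and carries over to the half-line through extensions. This gives $\no{\Phi(u)}_{Y^s_T}\leq \rho/2+cT^{1/4}\rho^2$. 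Similarly, writing $u^2-v^2=(u-v)(u+v)$ yields the contraction bound. The power of $T$ one actually gets may be $\sqrt T$, as in the stated smallness condition, once the factors are distributed more carefully, for instance using $L^2_t$ in the forcing; I would adjust the exponent accordingly. Choosing $T$ under the stated condition makes $\Phi$ a contraction on $\overline{B(0,\rho)}$. Banach's fixed point theorem then gives existence and uniqueness in the ball. Local Lipschitz dependence follows by applying the same difference estimates to two data sets.

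The main obstacle is the $L^4_tL^\infty_x$ bound for the boundary-data part of the unified transform formula at low regularity. It requires careful contour deformation, control of the non-polynomial dispersion $\sqrt{k^2+k^4}$ near $k=0$, and handling of the Robin-induced denominators. I would first attempt it by reducing to oscillatory integrals $\int e^{ikx+i\omega t}\,\hat{g}(\omega)\,d\omega$ along rays $\operatorname{Im}k\geq 0$ and invoking a Strichartz estimate for such boundary operators, with the decaying factor $e^{ikx}$ used for $x>0$.
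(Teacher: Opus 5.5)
Your nonlinear step matches the paper: fractional Leibniz with zero extension for $0\le s<\frac12$, H\"older in $t$, Banach's fixed point theorem on $\overline{B(0,\rho)}$, and the Lipschitz bound from the difference estimate. The gap is in the linear estimate, specifically in how you reduce to the boundary part.

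Splitting \eqref{T-sol} into ``whole-line propagators with extended $u_0,v_1$'', ``Duhamel'', and ``boundary terms driven by $\alpha,b$'' does not decompose the solution. Formula \eqref{T-sol} also contains initial-data and forcing terms along $\partial D_{1,2}$, namely those with $\hat u_0(-k)$, $\hat u_1(\pm\nu)$, $\hat f(-k,\tau)$, etc. You never estimate these. They also involve half-line transforms of $u_1$ (or $v_1$), which the data norms do not control because $s-2,\,s-1<-\frac12$.

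Once you use arbitrary extensions $U_0,\mathcal U_1$, the boundary part must be driven by \emph{corrected} data: $\varphi_0=\alpha-(\partial_x+\gamma)U(0,t)$ and its analogue for the second condition. Bounding these requires the Cauchy time-trace estimates \eqref{ivp-te-t}, which your plan never invokes.

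Theorem \ref{r-t} and the $L^4_tL^\infty_x$ bounds also need data supported in $t\geq 0$, so that $\tilde\varphi(\omega,T)$ becomes $\mathcal F\{\varphi\}(\omega)$. Hence ``extend $\alpha$ and $b$ to $\R_t$'' has to mean zero extension. Zero extension is bounded only for Sobolev exponents above $-\frac12$ (Lemma \ref{Chris-l}, Lions--Magenes Theorem 11.4). The second corrected datum lies in $H^{\frac{2s-3}{4}}$ with $\frac{2s-3}{4}<-\frac12$, so it cannot be zero-extended directly.

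This is exactly where the structural hypotheses enter, and you have placed them elsewhere. The missing idea is Lemma \ref{ivp-l}: since $u_1=\mathcal U_1'$, one has $\partial_x^jS[U_0,\mathcal U_1';0]=\partial_tS[V_{0,j},V_{1,j};0]$. Therefore the second corrected datum is $q_0'$ with $q_0$ as in \eqref{psi0q0}, and $q_0\in H^{\frac{2s+1}{4}}(0,T)$ by \eqref{psi0-est2}. One then zero-extends $q_0$ to $Q_0$ and applies Theorem \ref{r-t} with $\psi=Q_0'$.

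So $u_1=v_1'$ is needed to keep the second boundary datum in derivative form. It is not needed to tame low frequencies of the sine propagator: \eqref{su1-est2} holds for any $U_1\in H^{-2}(\R)$.

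Similarly, integrating by parts in $\int_0^T e^{-i\omega t}b'(t)\,dt$ produces $b(0)$ and $b(T)$, which are undefined since $\frac{2s+1}{4}<\frac12$. You must zero-extend first and then use $\mathcal F\{Q_0'\}(\tau)=i\tau\,\mathcal F\{Q_0\}(\tau)$.

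Finally, the paper treats the forcing through the half-line Duhamel formula \eqref{dec-low}, reusing the homogeneous initial-boundary value estimate at each $t'$. This is legitimate because no compatibility conditions arise for $s<\frac12$. With whole-line Duhamel, as you propose, you would also have to subtract and bound the traces of $Z$ at $x=0$, again in derivative form.
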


We note that the assumption $u_1 = v_1'$ for the second initial datum when $s<\frac 12$ is also present in the analysis of the Cauchy (initial value) problem for the ``good'' Boussinesq equation on the infinite line \cite{l1993}. However, our analysis shows that this assumption can actually be avoided in the context of the Cauchy problem. On the other hand, the assumption $\beta=b'$ for the second Robin datum is necessary for employing a certain Sobolev extension theorem when $s<\frac 12$ (see beginning of Section \ref{l-nl-low-s}) and it is the reason why we also assume $u_1 = v_1'$ in that range of~$s$.
As in the case of high regularity, the proof of Theorem \ref{lwp-t-low} is based on a contraction mapping argument that utilizes the following result for the forced linear problem \eqref{lgbous}:
\begin{theorem}[Linear estimate -- low regularity]\label{linear-low-t}
For $0 \leq s < \frac12$ and any $T>0$, the solution to the problem~\eqref{lgbous} for the forced linear ``good'' Boussinesq equation on the half-line with initial and boundary data as in Theorem \ref{lwp-t-low} and forcing $w \in L_t^1((0, T); H_x^s(0, \infty))$ admits the estimate
\aaa{\label{lin-est-low}
\no{S\big[u_0, v_1', \alpha, b'; -\p_x^2 w\big]}_{Y_T^s}
\leq
c(s, T) \Big(
&\no{u_0}_{H^{s}(0, \infty)} + \no{v_1}_{H^{s-1}(0, \infty)}
+
\no{\alpha}_{H^{\frac{2s-1}{4}}(0, T)}
+
\no{b}_{H^{\frac{2s+1}{4}}(0, T)}
\nn\\
&+\no{w}_{L_t^1((0, T); H_x^s(0, \infty))}
 \Big)
 }
where $c(s, T)>0$ is a constant that remains bounded as $T\to 0^+$. 
\end{theorem}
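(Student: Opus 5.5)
The proof follows the strategy used for Theorem \ref{linear-t}: decompose the forced linear problem \eqref{lgbous} via linearity and the unified transform, and estimate each piece separately. Write $S[u_0,v_1',\alpha,b';-\p_x^2 w] = U + V$. Here $U$ solves a half-line problem with zero forcing and modified boundary data. $V$ is the restriction to $(0,\infty)$ of the solution of the forced Cauchy problem on $\R$ with data $(U_0, V_1')$ and forcing $-\p_x^2 W$. The functions $U_0\in H^s(\R)$, $V_1\in H^{s-1}(\R)$ and $W\in L^1_t H^s_x(\R)$ are extensions with comparable norms. For $s<\frac12$ such extensions exist without compatibility conditions, and for $0\le s<\frac12$ extension by zero is even bounded.

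For the whole-line part, we write the Duhamel formula with the multiplier $\omega(k)=k\sqrt{1+k^2}$. The datum $V_1' $ contributes $\frac{\sin(\omega t)}{\omega}(ik)\widehat V_1$, and $|k|/\omega\lesssim (1+k^2)^{-1/2}$. This is exactly why assuming $v_1\in H^{s-1}$ rather than $u_1\in H^{s-2}$ gives $H^s$ control without a low-frequency singularity. The forcing contributes $\int_0^t \frac{\sin(\omega(t-t'))}{\omega}k^2\widehat W\,dt'$ with $k^2/\omega\lesssim 1$. Minkowski's inequality then gives the $L^\infty_tH^s$ and $L^\infty_tH^{s-2}$ bounds for $V$ and $\p_tV$ in terms of $\|W\|_{L^1_tH^s}$.

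For the $L^4_tL^\infty_x$ norm, we factor $e^{\pm i\omega t}$ and use the Strichartz estimate for the fourth-order group. Since $|\omega''(k)|\sim |k|$ for large $k$, the admissible pair $(4,\infty)$ holds with a derivative gain, while low frequencies are handled by Bernstein/Sobolev on a finite interval $t\in(0,T)$. The forcing term is treated by Minkowski in $t'$, which is why $L^1_t$ forcing suffices. We also need the boundary traces of $V$, namely $V(0,t), V_x(0,t), V_{xx}(0,t)$, in $H^{\frac{2s+1}4}, H^{\frac{2s-1}4}, H^{\frac{2s-3}4}(0,T)$ via time-regularity (Kato-smoothing-type) estimates. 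These bound the modified data $\alpha - V_x(0,\cdot)-\gamma V(0,\cdot)$ and, crucially, $b - \int_0^t(V_{xx}+\delta V_x)(0,\cdot)$ in $H^{\frac{2s+1}4}$. The latter is where $\beta=b'$ is used: the trace $V_{xx}(0,t)$ is of negative order and is controlled after time integration.

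For the boundary part $U$, we use the unified-transform formula derived earlier for zero initial data and Robin data. It is an integral over the boundary of the first-quadrant-type region where $\mathrm{Re}(i\omega)<0$, with integrands $e^{ikx-i\omega t}\tilde\alpha(\omega)$ and $\tilde\beta(\omega)$ divided by a Robin-dependent factor $(ik+\gamma)(\ldots)$. First we extend the modified data to $\R_t$ with compact support. This uses $\frac{2s-1}4\in(-\frac14,0)$ and $\frac{2s+1}4\in(\frac14,\frac12)$, so extension by zero is bounded; the $b$-formulation is what makes $\beta$'s extension legitimate. We then change variables $\tau=\omega(k)$ and estimate the $H^s_x$ norm as in the high-regularity case, using the Laplace-transform-type estimate for $x>0$. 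The $L^4_tL^\infty_x$ bound follows by splitting the contour into the real part, handled by Strichartz again, and the exponentially decaying part, where $\|e^{ikx}\|$ decays in $x$ and Minkowski plus Sobolev in $t$ suffice. The Robin denominators need care near their possible zeros ($k=i\gamma$, etc.). When $\gamma,\delta$ produce zeros inside the region, we deform the contour around them, at the cost of a $T$-dependent constant that stays bounded as $T\to0^+$.

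\textbf{Main obstacle:} the $L^4_tL^\infty_x$ estimate for the boundary part $U$ with the negative-order datum. The boundary integrals are not Fourier transforms in $x$, so Strichartz must be obtained after rewriting the real-contour part as a genuine oscillatory integral in $\tau$. This requires controlling the Jacobian $dk/d\tau$ and the Robin multiplier uniformly. The approach I would try first is to show these multipliers are bounded symbols, reducing to the whole-line Strichartz estimate applied to the function whose Fourier transform is $\tilde b(\omega(k))\,\omega'(k)\times(\text{symbol})$.
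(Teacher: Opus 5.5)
\paragraph{There is a genuine gap in how you handle the forcing in the second boundary condition.}
You put $-\p_x^2 W$ into the whole-line Cauchy problem, as in the high-regularity decomposition \eqref{dec}. You then assert that, for the Duhamel part $Z=S[0,0;-\p_x^2 W]$ of your $V$, the trace $Z_{xx}(0,\cdot)$ is controlled in $H^{\frac{2s-3}{4}}(0,T)$ by $\no{w}_{L^1_tH^s_x}$. Alternatively, you assert that a time primitive of it is controlled in $H^{\frac{2s+1}{4}}(0,T)$. For $0\le s<\frac12$ both claims are false.

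The paper's tools do not give them: Theorem \ref{hom-te-t} with $j=2$ requires $\frac12<s<\frac52$, and \eqref{fivp-te5} is only stated for $s>\frac12$. The obstruction is also real, not a defect of the method. Take $W(x,t)=W_n(x):=n^{\frac12-s}\phi(nx)$ for $t\in(0,T)$, so that $\no{W}_{L^1_tH^s_x}\lesssim T$. Then
\[
Z_{xx}(0,t)=-\frac{1}{2\pi}\int_\R \frac{k^2}{1+k^2}\,\mathcal F\{W_n\}(k)\,dk+\frac{1}{2\pi}\int_\R \frac{k^2\cos(\omega t)}{1+k^2}\,\mathcal F\{W_n\}(k)\,dk .
\]
\begin{itemize}
\item The first term is independent of $t$ and equals $-W_n(0)+O(\no{W_n}_{L^2})\approx -n^{\frac12-s}\phi(0)$.
\item The second term disperses once $t\gg n^{-2}$.
\item The same happens, with $\frac12\phi(0)$ in place of $\phi(0)$, for the zero extension you propose.
\end{itemize}
Hence $Z_{xx}(0,\cdot)$, and every primitive of it on $(0,T)$, has norm $\gtrsim n^{\frac12-s}\to\infty$ in the respective space. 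Integrating in time cannot help, because the bad part does not oscillate in $t$. It is essentially $\int_0^t W(0,t')\,dt'$, which involves the trace of an $H^s$ function with $s<\frac12$. So the modified datum $b-\int_0^t(V_{xx}+\delta V_x)(0,\cdot)$ is not bounded by the data norms, and your estimate chain breaks there.

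\paragraph{The missing idea is the half-line Duhamel decomposition \eqref{dec-low}.}
It is admissible at low regularity because no compatibility conditions arise.
\begin{itemize}
\item The forcing then enters only as an initial velocity $-\p_x^2 w(t')=v_1'$ with $v_1=-\p_x w(t')\in H^{s-1}(0,\infty)$.
\item For Cauchy data of the form $\mathcal U_1'$, Lemma \ref{ivp-l} gives the exact identity $\p_x^j S[U_0,\mathcal U_1';0]=\p_t S[V_{0,j},V_{1,j};0]$. So the $\beta$-correction is the derivative of $q_0$ in \eqref{psi0q0}.
\item This $q_0$ is bounded in $H^{\frac{2s+1}{4}}(0,T)$ by the homogeneous time estimate \eqref{ivp-te-t}, which holds for every $s$ and $j$. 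No forced-Cauchy trace estimate at the level of $\p_x^2$ is ever needed.
\item One then zero-extends $q_0$, applies Theorem \ref{r-t} with $\psi=Q_0'$, and integrates the slice estimates in $t'$.
\end{itemize}

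\paragraph{A secondary point on the $L^4_tL^\infty_x$ bound.}
For the imaginary-axis part of the contour, your sketch brings $\sup_{x>0}$ inside the $\lambda$-integral. This discards the oscillation $e^{-i\omega(i\lambda)t}$ and leaves essentially $\int_a^\infty|\mathcal F\{\varphi\}(-\omega(i\lambda))|\,d\lambda$, which $\no{\varphi}_{H^{-\frac14}(\R)}$ does not control. The paper instead uses a $TT^*$ argument with a van der Corput kernel bound $\lesssim|t-\tau|^{-\frac12}$, uniform in $x+z\ge0$, followed by Hardy--Littlewood--Sobolev.
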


The crucial linear estimates \eqref{lin-est} and \eqref{lin-est-low} are  proved via the explicit solution formula \eqref{T-sol} for the forced linear initial-boundary value problem~\eqref{lgbous}, which is derived via the unified transform (also known as the Fokas method) \cite{f1997,f2008}. This is a powerful technique designed specifically for the solution of linear initial-boundary value problems supplemented with any type of admissible boundary condition. It provides the direct analogue of the Fourier transform (which is used in the analysis of initial value problems) for initial-boundary value problems. Since its introduction nearly thirty years ago, there are numerous related contributions in various directions; indicatively, we mention \cite{f2002,p2004,fp2005,s2012,fps2022} as well as the expository works and conference proceedings volumes~\cite{dtv2014,fp2015,bountis2022,cfk2025} and the references therein. 

In the present work, we prove Theorems \ref{lwp-t}-\ref{linear-low-t} by taking advantage of a concrete application of the unified transform to nonlinear partial differential equations, namely of a novel method introduced by one of the authors during the last decade for proving the Hadamard local well-posedness of nonlinear evolution equations in the setting of nonhomogeneous initial-boundary value problems. This method was originally developed for the nonlinear Schr\"odinger and Korteweg-de Vries equations on the half-line in the case of Dirichlet data~\cite{fhm2017,fhm2016}. Subsequently, it was employed for the rigorous analysis of several other nonlinear problems, e.g. see \cite{hmy2019-rd,oy2019,hm2020,hy2022-jde,amo2024,mo2025,mmo2026}. In particular, it was used for proving the local well-posedness of the ``good'' Boussinesq equation on the half-line in the case of Dirichlet and Neumann boundary data, i.e. when prescribing $u(0, t)$ and $u_x(0, t)$ \cite{hm2015}. 
Here, we deal with the more demanding case of Robin boundary data, which come in the form of linear combinations of the Dirichlet value, the Neumann value, and the second spatial derivative at $x=0$. The presence of this latter term results in a rougher Sobolev space for the second boundary condition. Moreover, the presence of Robin data results in a more complicated solution formula that involves integrands with singularities in the complex spectral plane (note the terms $\Delta_{1, 2}$ in \eqref{T-sol}). These new features make the analysis more involved from a technical standpoint.

Beyond the different type of boundary data, the present work advances \cite{hm2015} in two further directions. First, in the high regularity setting of $s>\frac 12$ (which is the one considered in \cite{hm2015}), it removes the assumption of~\cite{hm2015} that the second initial datum as well as the Neumann datum be prescribed in the form of a spatial and a temporal derivative respectively. Namely, contrary to \cite{hm2015}, the well-posedness result of Theorem~\ref{lwp-t} does not require any additional structure on any of the four pieces of initial and boundary data.
Second, importantly, Theorems~\ref{lwp-t-low} and~\ref{linear-low-t} advance the local well-posedness theory of the ``good'' Boussinesq equation to the Strichartz estimates territory of low regularity $s<\frac 12$, in contrast with \cite{hm2015} which is restricted to the high regularity setting of $s>\frac 12$. 

This latter improvement can be especially appreciated by noting that \textit{Theorem \ref{lwp-t-low} provides the precise analogue of the best result available in classical (i.e. non-Bourgain) spaces for the ``good'' Boussinesq equation in the setting of the Cauchy problem, obtained by Linares  \cite{l1993} via the use of appropriate Strichartz estimates}. In this connection, we mention that the well-posedness of the Cauchy problem for the ``good'' Boussinesq equation on the whole line has been studied extensively. In particular, local well-posedness for initial data in Sobolev spaces of higher regularity was established by Bona and Sachs~\cite{bs1988}, while lower regularity results in classical spaces were subsequently obtained by Tsutsumi and Matahashi \cite{tm1991} and, notably, Linares \cite{l1993}. In later years, Farah~\cite{farah2009} as well as Kishimoto and Tsugawa \cite{kt2010} were able to penetrate into the territory of negative Sobolev exponents via the use of Bourgain spaces. Corresponding results for the periodic Cauchy problem were obtained by Farah and Scialom~\cite{fs2010}, Oh and Stefanov \cite{os2013}, and Kishimoto \cite{k2013}. 

In light of the above, it should be emphasized that the study of nonlinear evolution equations supplemented with nonzero boundary conditions involves several additional challenges, the most fundamental one being the lack of Fourier transform and corresponding harmonic analysis machinery. This fact possibly explains why the corresponding literature is in general much more limited than the one on the Cauchy problem. Specifically, besides the method of \cite{fhm2017,fhm2016} employed here, which circumvents the lack of Fourier transform by  means of Fokas's unified transform,  two other main approaches for the well-posedness of nonlinear initial-boundary value problems are (i) the temporal Laplace transform method of Bona, Sun and Zhang, which was introduced in \cite{bsz2002} for the KdV equation on the half-line and has since been used in several other works, e.g.~\cite{bsz2006,bsz2008,kai2013,ozs2015,bo2016,et2016}, and (ii) the  boundary forcing operator method of Colliander,  Kenig and Holmer, first developed for the generalized KdV equation on the half-line \cite{ck2002} and subsequently for the KdV and NLS equations on the half-line \cite{h2005,h2006,c2017,cc2020}. Other noteworthy contributions are those by Faminskii \cite{fam2004,fam2007,f2020,f2024}. 

Specifically for the ``good'' Boussinesq equation on the half-line, other well-posedness works besides \cite{hm2015} are those by Xue \cite{x2008,x2010} (including results in the low regularity setting of $s<\frac 12$) as well as by Compaan and Tzirakis \cite{ct2017}, all three of them via the temporal Laplace transform method of \cite{bsz2002} and in the case of \textit{individual boundary values} specified as data. The well-posedness results established in the present work for the \textit{Robin data} problem \eqref{gbous} are, to the best of our knowledge, the first in this direction, following corresponding results for the nonlinear Schr\"odinger equation obtained in \cite{hm2021} but, in addition, advancing the analysis into the low regularity territory of $s<\frac 12$.
\\[2mm]
\textit{Structure.} In Section \ref{red-s}, we estimate a reduced initial-boundary value problem with has zero forcing, zero initial data and compactly supported boundary data. This task reveals the function spaces for the Robin boundary data in problem \eqref{gbous}. In Section \ref{cauchy-s}, we proceed to the estimation of the homogeneous and forced Cauchy problems where, importantly, we establish new time estimates needed specifically in the context of the initial-boundary value problem analysis. We then prove the high regularity results of Theorems \ref{lwp-t} and \ref{linear-t} in Section~\ref{l-nl-high-s}. In Section \ref{l-nl-low-s}, we deal with the case of low regularity and obtain new linear estimates in the refined solution space $Y_T^s$, leading to the proof of Theorems \ref{lwp-t-low} and \ref{linear-low-t}. Finally, in Section \ref{ut-s} we derive the unified transform solution formula \eqref{T-sol} for the forced linear problem \eqref{lgbous}, which plays a crucial role in the overall analysis presented in this work.

\section{Estimation of the reduced initial-boundary value problem}
\label{red-s}

In this section, we estimate the solution to the reduced initial-boundary value problem
\ddd{\label{lgbous-r}
&v_{tt} - v_{xx} + v_{xxxx} = 0, \quad  x \in (0, \infty), \ t\in (0, T), 
\\
&v(x,0) = 0, 
\quad
v_t(x,0) = 0, 
\\
&v_x(0,t)+\gamma v(0,t) = \varphi(t), \quad v_{xx}(0,t)+\delta v_{x}(0,t) = \psi(t), 
\\
&\text{supp}(\varphi), \ \text{supp}(\psi) \subset (0, T), 
}
whose solution follows from the general formula \eqref{T-sol} as

\aaa{\label{v-sol}
v(x, t) 
&=
-\frac{1}{2\pi} \int_{\p D_2}  \frac{e^{ikx+i\omega(k) t}}{i\omega(k)}\cdot \frac{k \left[k-\nu(k)\right]}{\Delta_2(k)} \left\{\left[\nu(k)+i\gamma\right] \mathcal F\{\psi\}(\omega) + i \nu(k) \left[\nu(k)+i\delta\right] \mathcal F\{\varphi\}(\omega(k)) \right\} dk
\\
& \quad -\frac{1}{2\pi} \int_{\p D_1}  \frac{e^{ikx-i\omega(k) t}}{i\omega(k)} \cdot \frac{k\left(k+\nu(k)\right)}{\Delta_1(k)} 
\left\{
\left[\nu(k)-i\gamma\right] \mathcal F\{\psi\}(-\omega(k)) - i \nu(k) \left(\nu(k)-i\delta\right)\mathcal F\{\varphi\}(-\omega(k))
\right\} dk.
\nn
}
Note that the Fourier transform in $t$ has replaced the ``tilde'' transforms present in \eqref{T-sol} due to the compact support condition on $\varphi, \psi$. 

\begin{theorem}\label{r-t}
Suppose $s\in \R$. Then, for each $t\in [0, T]$, the solution $v = S\big[0, 0, \varphi, \psi; 0\big]$ of the reduced initial-boundary value problem \eqref{lgbous-r} satisfies the space estimate
\eee{\label{r-se}
\no{\p_t^j v(t)}_{H_x^{s-2j}(0, \infty)}  
 \leq
  \left[c_j(s)+c_j(s, T) \sqrt{T+1} \big(1-e^{-(T+1)}\big)^{-1} \right] \left(\no{\varphi}_{H^{\frac{2s-1}{4}}(\R)} + \no{\psi}_{H^{\frac{2s-3}{4}}(\R)}\right),
  \quad j \in \N_0,
}
where the constant $c_j(s, T)$ remains bounded as $T\to 0^+$.
\end{theorem}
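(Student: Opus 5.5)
The proof will go through the explicit formula \eqref{v-sol}, along the lines of the Dirichlet/Neumann analysis of \cite{hm2015}; the new ingredient is the Robin denominators $\Delta_{1,2}$. Here $\omega(k)=k\sqrt{k^2+1}$ and $\nu(k)=\sqrt{k^2+2}$, so that $\nu(k)\sim k$ for large $|k|$. The contours $\p D_1,\p D_2$ lie in the upper half of the complex $k$-plane, and on them $\omega$ is real. As a first step I would parametrize each contour. I expect each one to consist of a piece of the real axis, or a ray, on which $\operatorname{Re}(ik)\le 0$ and $\omega$ is real, together with a bounded piece near the branch points $\pm i$ and $\pm i\sqrt2$. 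On the unbounded part I would change variables to $\tau=\pm\omega(k)$, so that $\mathcal F\{\varphi\}(\pm\omega)$ and $\mathcal F\{\psi\}(\pm\omega)$ appear as functions of the real variable $\tau$.

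The main estimate splits into high and low frequencies. For $|k|\ge K_0$ large, $\omega\sim k^2$ and $\Delta_{1,2}(k)\sim c\,k^2$. Then the $\psi$ coefficient behaves like $k^{2}\cdot k/(k^2\cdot k^2)\sim k^{-1}$ and the $\varphi$ coefficient like $k^{2}\cdot k^2/k^4\sim 1$. Writing $v$ as an $x$-Fourier-type integral $\int e^{ikx}\hat g(k)\,dk$ with $k$ real, I would apply the Paley–Wiener type lemma: for functions of the form $\int_0^\infty e^{ikx}f(k)\,dk$, or of the form $\int_0^\infty e^{-\lambda(k)x}f(k)\,dk$ with $\lambda(k)\sim k$ along a ray, the $H^s(0,\infty)$ norm is controlled by $\|(1+k^2)^{s/2}f\|_{L^2}$, the latter via the boundedness of the Laplace transform on $L^2$. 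For $j$ time derivatives, each derivative produces a factor $\omega\sim k^2$, which the drop from $s$ to $s-2j$ absorbs. After the substitution $\tau=\omega(k)$, $dk\sim d\tau/|\tau|^{1/2}$ and $(1+k^2)^{s}\sim(1+\tau^2)^{s/2}$. This gives $\int(1+\tau^2)^{(2s-1)/4}|\mathcal F\{\varphi\}(\tau)|^2d\tau$, and the extra factor $k^{-1}$ in front of $\psi$ yields the exponent $(2s-3)/4$. This is exactly how the norms on the right of \eqref{r-se} arise, and it produces the $T$-independent constant $c_j(s)$.

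The low-frequency and bounded-contour parts are where the $T$-dependence enters. There $(1+k^2)^{s}$ is harmless, but one must control $|\mathcal F\{\varphi\}(\omega)|$ for complex or small $\omega$ and handle the factor $1/\omega$ near $k=0$. Since $\operatorname{supp}\varphi\subset(0,T)$, I would bound $|\mathcal F\{\varphi\}(\omega)|\le\int_0^T e^{|\operatorname{Im}\omega| t}|\varphi(t)|dt$ by Cauchy–Schwarz against a weight. To avoid imposing $s$-dependent regularity at low frequency, I would instead write $\varphi$ in terms of its $H^{(2s-1)/4}$ Fourier representation and integrate against $e^{-(T+1)\cdots}$. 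This is the source of the factors $\sqrt{T+1}$ and $(1-e^{-(T+1)})^{-1}$. The singularity of $1/\omega$ at $k=0$ cancels in combination with $k[k\mp\nu(k)]$ and the vanishing of the bracket, or else it is removable because of the prefactor $k$. I would verify this by a Taylor expansion.

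The step I expect to be the main obstacle is showing that $\Delta_{1}$ and $\Delta_2$ do not vanish on (or near) the contours, with a uniform lower bound of order $|k|^2$ at infinity and bounded away from $0$ on compact parts. For generic $\gamma,\delta$ these Robin denominators could have zeros in $D_{1,2}$. My first attempt would be to deform the contours, as the unified transform permits, so that they avoid such zeros, for example by replacing them with small circles around them. I would then check that the residue contributions are finite sums of terms $e^{ik_jx}\times$(data transform) with $\operatorname{Im}k_j>0$, which are trivially bounded in every $H^{s}(0,\infty)$ by the low-frequency argument above. Combining the high- and low-frequency bounds for each $j$ gives \eqref{r-se}.
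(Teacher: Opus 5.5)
Your skeleton matches the paper's proof. You reduce $j\ge1$ to $j=0$ and treat the unbounded real piece by Plancherel and the substitution $\tau=-\omega(k)$, which produces the weights $(1+\tau^2)^{\frac{2s-1}{4}}$ and $(1+\tau^2)^{\frac{2s-3}{4}}$. You treat the imaginary ray by Hardy's $L^2$ bound for the Laplace transform, and a bounded piece separately. However, your picture of the contours is wrong, and it sends your effort to the wrong place. In \eqref{v-sol}, $\nu(k)=i(1+k^2)^{\frac12}$ with cut $i[-1,1]$, so $\nu\sim ik$ and $|k\pm\nu|\simeq|k|$. With your $\nu=\sqrt{k^2+2}\sim k$ one would have $k-\nu=O(1/k)$, and your coefficient count on $\p D_2$ would fail. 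Moreover, $\p D_1$ consists of the ray from $i\infty$ to $ia$, the quarter circle $|k|=a$, and $[a,\infty)$; $\p D_2$ is its mirror image. Here $a$ is chosen so large that all zeros of $\Delta_{1,2}$ lie in $|k|<a$ and $|\Delta_{1,2}(k)|\simeq|k|^2$ on the contours. Hence the contours never come near $k=0$, the branch points, or the zeros of $\Delta_{1,2}$. There is no $1/\omega$ singularity to cancel and no residue to collect, so what you call the main obstacle is settled by the choice of $a$.

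The genuine gap is the quarter circle $C$, where validity for all $s\in\R$ and the $T$-dependence are actually decided. On $C$, $\omega$ is complex with $\text{Im}\,\omega\ge0$, so $\mathcal F\{\varphi\}(-\omega)$ is a value of the entire extension. For $s<\frac12$ the datum $\varphi\in H^{\frac{2s-1}{4}}(\R)$ is only a distribution, so Cauchy--Schwarz against $|\varphi|$ is unavailable; the paper notes that the direct estimate forces $s\ge\frac12$. Your phrase ``write $\varphi$ in terms of its Fourier representation and integrate against $e^{-(T+1)\cdots}$'' is not a mechanism. The missing idea is a concrete way to use the compact support of $\varphi$ to control such point values by a negative Sobolev norm.

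The paper does this as follows. It takes $\eta=\chi_{[0,T+1]}e^{-t}$ and uses the lower bound $|\mathcal F\{\eta\}(-\omega)|\gtrsim(1-e^{-(T+1)})a^{-2}$ on $C$. It then writes $\mathcal F\{\varphi\}(-\omega)=[\mathcal F\{\eta\}(-\omega)]^{-d}\,\mathcal F\{\eta^{*d}*\varphi\}(-\omega)$. Combining $\text{supp}(\eta^{*d}*\varphi)\subset[0,(d+1)T+d]$ with Cauchy--Schwarz and Plancherel gives a bound $\lesssim\sqrt{(d+1)T+d}\,(1-e^{-(T+1)})^{-d}\|\varphi\|_{H^{-d}(\R)}$ with $d\ge\frac{1-2s}{4}$. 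Pairing $\varphi$ with $\chi(t)e^{i\omega t}$ for a smooth cutoff $\chi\equiv1$ on $[0,T]$ would work equally well.

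Two further steps are missing. First, you need $\int_0^\infty\bigl(\int_0^{\pi/2}e^{-ax\sin\theta}\,d\theta\bigr)^2dx<\infty$, via $\sin\theta\ge2\theta/\pi$, to place the arc in $L^2_x(0,\infty)$. This is because $C$ meets the real axis at $k=a$, where $e^{ikx}$ does not decay, so this piece is not automatically ``harmless''. Second, Hardy's bound on the ray gives only integer $s\ge0$ directly. Fractional $s$ needs the Gagliardo seminorm computation, and $s<0$ needs duality against $H_0^{-s}(0,\infty)$ plus interpolation.
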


\begin{proof}
Observe that
\ddd{\label{v-solt}
\p_t^j v(x, t) 
&=
-\frac{1}{2\pi} \int_{\p D_2}  \frac{e^{ikx+i\omega(k) t}}{i\omega(k)}\cdot  \frac{k \left(k-\nu(k)\right)}{\Delta_2(k)} \left\{\left(\nu(k)+i\gamma\right) \mathcal F\{\psi^{(j)}\}(\omega) + i \nu(k) \left(\nu(k)+i\delta\right) \mathcal F\{\varphi^{(j)}\}(\omega(k)) \right\} dk
\\
& \quad -\frac{1}{2\pi} \int_{\p D_1}  \frac{e^{ikx-i\omega(k) t}}{i\omega(k)} \cdot   \frac{k\left(k+\nu(k)\right)}{\Delta_1(k)} 
\left\{
\left(\nu(k)-i\gamma\right) \mathcal F\{\psi^{(j)}\}(-\omega(k)) - i \nu(k) \left(\nu(k)-i\delta\right)\mathcal F\{\varphi^{(j)}\}(-\omega(k))
\right\} dk,
}
which is the same with formula \eqref{v-sol} for $v$ after replacing $\varphi, \psi$ by $\varphi^{(j)}$, $\psi^{(j)}$.
Therefore, we can deduce \eqref{r-se} for $j\in \N$ from the base case of $j=0$ with $s$ replaced by $s-2j$ and $\varphi, \psi$ replaced by $\varphi^{(j)}$, $\psi^{(j)}$. Thus, it suffices to establish \eqref{r-se} for $j=0$.

We write $2\pi v = I_{\varphi, 1} + I_{\varphi, 2} +  I_{\psi, 1} + I_{\psi, 2}$ where
\ddd{\label{iterms}
I_{\varphi, 1}(x, t) 
&=
 \int_{\p D_1}  \frac{e^{ikx-i\omega(k) t}}{i\omega(k)} \cdot \frac{k\left(k+\nu(k)\right)}{\Delta_1(k)} \, \nu(k) \left(\nu(k)-i\delta\right)\mathcal F\{\varphi\}(-\omega(k)) dk
\\
I_{\varphi, 2}(x, t) 
&=
\int_{\p D_2}  \frac{e^{ikx+i\omega(k) t}}{i\omega(k)}\cdot \frac{k \left[k-\nu(k)\right]}{\Delta_2(k)} \, \nu(k) \left[\nu(k)+i\delta\right] \mathcal F\{\varphi\}(\omega(k)) dk
\\
I_{\psi, 1}(x, t) 
&=
 \int_{\p D_1}  \frac{e^{ikx-i\omega(k) t}}{i\omega(k)} \cdot \frac{k\left(k+\nu(k)\right)}{\Delta_1(k)} 
\left[\nu(k)-i\gamma\right] \mathcal F\{\psi\}(-\omega(k))  dk
\\
I_{\psi, 2}(x, t) 
&=
\int_{\p D_2}  \frac{e^{ikx+i\omega(k) t}}{i\omega(k)}\cdot \frac{k \left[k-\nu(k)\right]}{\Delta_2(k)}  \left[\nu(k)+i\gamma\right] \mathcal F\{\psi\}(\omega(k))   dk
}
We will only provide details for $I_{\varphi, 1}$, since the remaining integrals can be handled in an entirely analogous manner.

\begin{figure}[h!]
\begin{center}
\resizebox{7cm}{5.5cm}{
\begin{tikzpicture}[>=Stealth]

    \draw[<->,thick] (-4,0) -- (4,0) node[below]{Re($k$)};
    \draw[<->,thick] (0,-2.6) -- (0,4.0) node[right]{Im($k$)};

    \draw[thick] (-0.1, 1.2) -- (0.1, 1.2) node[left=5pt] {$i$}; 
     \draw[thick] (-0.1, -1.2) -- (0.1, -1.2) node[left=5pt] {$-i$}; 

      \draw[red, thick, branch cut] (0,1.2) -- (0,-1.2);

    \draw[blue, ultra thick, 
        decoration={markings, 
            mark=at position 0.2 with {\arrow{>}}, 
            mark=at position 0.5 with {\arrow{>}}, 
            mark=at position 0.85 with {\arrow{>}}}, 
        postaction={decorate}]
        (0.0, 3.5) -- (0.0, 1.8) node[above right]{\color{black}{$ia$}}
        arc[start angle=90, end angle=0, radius=1.8] node[below]{\color{black}{$a$}}
        -- (3.5, 0.0)
        node[above left, black] {$\partial D_1$};
         \draw[blue, ultra thick, 
        decoration={markings, 
            mark=at position 0.1 with {\arrow{<}}, 
            mark=at position 0.47 with {\arrow{<}}, 
            mark=at position 0.85 with {\arrow{<}}}, 
        postaction={decorate}]
        (0.0, 3.5) -- (0.0, 1.8)
        arc[start angle=90, end angle=180, radius=1.8] 
        node[below]{\color{black}{$-a$}}
        -- (-3.5, 0.0)
        node[above right, black] {$\partial D_2$};

  \node at (2, 2) {$D_1$};
  \node at (-2, 2) {$D_2$};
  
\begin{scope}[shift={(3.1,3.2)}]
        \draw[thick] (0,0.4) -- (0,0) -- (0.4,0);
        \node at (0.2, 0.2) {$k$};
    \end{scope}

\end{tikzpicture}
}
\end{center}
    \caption{The positively oriented boundaries $\p D_{1, 2}$   of the regions $D_{1, 2}$ for \eqref{v-solt} and \eqref{T-sol}.}
    \label{fig:0} 
\end{figure}

As depicted in Figure\eqref{fig:0}, the contour $\p D_1$ runs along the imaginary axis from $i\infty$ to $ia$ (where $a>0$ is large enough so as to avoid the zeros of $\Delta_1(k)$ and $\Delta_2(k)$ defined by \eqref{Del-2}), then becomes a quarter-circle from $ia$ to $a$, and finally aligns with the real axis along the portion $[a, \infty)$. We start with the portion of $I_{\varphi, 1}$ along the real axis, namely
\eee{\label{ifr-def}
I_{\varphi, 1}^r (x, t)
:=
\int_{a}^\infty  \frac{e^{ikx-i\omega(k) t}}{i\omega(k)} \cdot \frac{k\left(k+\nu(k)\right)}{\Delta_1(k)} \, \nu(k) \left(\nu(k)-i\delta\right)\mathcal F\{\varphi\}(-\omega(k)) dk.
}
This particular integral makes sense for all $x\in\mathbb R$ due to the fact that $|e^{ikx-i\omega(k)t}|=1$ for $k\in\mathbb R$. Thus, by the Fourier inversion theorem we have
\eee{
\mathcal F\{I_{\varphi, 1}^r\}(k, t)
=
2\pi \, \chi_{(a, \infty)}(k) \cdot \frac{e^{-i\omega(k) t}}{i\omega(k)} \cdot \frac{k\left(k+\nu(k)\right)}{\Delta_1(k)} \, \nu(k) \left(\nu(k)-i\delta\right)\mathcal F\{\varphi\}(-\omega(k)).
}
Then, by the Fourier transform characterization of $H^s(\mathbb R)$, we find
\aaa{
\no{I_{\varphi, 1}^r(t)}_{H_x^s(0, \infty)}^2
&\lesssim
\int_a^\infty \left(1+k^2\right)^s 
\left|
\frac{e^{-i\omega(k) t}}{i\omega(k)} \cdot \frac{k\left(k+\nu(k)\right)}{\Delta_1(k)} \, \nu(k) \left(\nu(k)-i\delta\right)\mathcal F\{\varphi\}(-\omega(k))
\right|^2 dk
\nn\\
&=
\int_a^\infty \left(1+k^2\right)^s 
\left(
\frac{\left|k+\nu(k)\right|   \left|\nu(k)-i\delta\right|}{\left|\Delta_1(k)\right|}\right)^2
\left| \mathcal F\{\varphi\}(-\omega(k))
\right|^2 dk.\label{redI-r1}
}
Here, we take 
\eee{
\Lambda_1^r(k) := \left(
\frac{\left|k+\nu(k)\right|   \left|\nu(k)-i\delta\right|}{\left|\Delta_1(k)\right|}\right)^2  = \frac{(1+2k^2)\left(1+k^2+\delta^2-2\delta\sqrt{1+k^2}\right)}{k^2\left[-\gamma + \left(1+k^2\right)^{\frac 12}\right]^2+\gamma^2\left[\delta-\left(1+k^2\right)^{\frac 12}\right]^2},
}
where we have simplified the function $\Lambda_1^r(k)$ using the definition of $\nu(k)$  as $k \in \R$ in this case.
Now, for fixed $\delta,\gamma \in \R$, we have $\Lambda_1^r(k) \to 2$ for $|k|\gg 1$, so we can take $\Lambda_1^r(k)\leq 3$ for sufficiently large $a$. Next, we let $k = \sqrt{\frac{-1 + \sqrt{1+4\tau^2}}{2}}$, which implies $\tau = -\omega(k)$ and 
$d\tau = -\omega'(k) dk = - \frac{1+2k^2}{\left(1+k^2\right)^{1/2}} \, dk.$
Therefore,
\aaa{
\no{I_{\varphi, 1}^r(t)}_{H_x^s(0, \infty)}^2
&\leq 
3 \int_{-\infty}^{-a \sqrt{1+a^2}} \left(1+\frac{-1 + \sqrt{1+4\tau^2}}{2}\right)^{s-\frac 12} 
 \left| \mathcal F\{\varphi\}(\tau)
\right|^2  \, d\tau
\nn\\
&\lesssim
\int_{-\infty}^{-a \sqrt{1+a^2}} \left(1+|\tau|\right)^{s-\frac 12} 
 \left| \mathcal F\{\varphi\}(\tau)
\right|^2  \, d\tau
\leq
\no{\varphi}_{H^{\frac{2s-1}{4}}(\mathbb R)}^2.\label{redI-r2}
}

Next, we consider the portion of $I_{\varphi, 1}$ along the imaginary axis. After parametrizing $k=i\lambda$, $\lambda \in [a, \infty)$, this reads
\eee{\label{im-phi}
I^i_{\varphi,1}(x,t) 
:=
\int_a^{\infty} \frac{e^{-\lambda x-i\omega(i\lambda)t}}{i\omega(i\lambda)}\cdot \frac{\lambda(i\lambda+\nu(i\lambda))}{\Delta_1(i\lambda)} \, \nu(i\lambda) \left(\nu(i\lambda)-i\delta\right) \mathcal{F}\{\varphi\} (-\omega(i\lambda))\,d\lambda
}
This integral only makes sense for $x \geq 0$ due to the presence of the exponential $e^{-\lambda x}$ for $\lambda>0$. Thus, instead of the Fourier characterization of the Sobolev norm, we employ the physical space norm.

For $s \in \N_0 := \mathbb{N} \cup \{0\}$, we have
\eee{\label{sn-def}
	\no{I^i_{\varphi,1}(t)}_{H^s_x(0,\infty)} = \sum_{j=0}^s \no{\p^j_x I^i_{\varphi,1}(t)}_{L^2_x(0,\infty)}.
}
For any $j \in \{0,1,\ldots,s\}$, 
\aaa{
\no{\p_x^j I^i_{\varphi,1}(t)}^2_{L^2_x(0,\infty)}
&=
\int_0^\infty \left|  \int_a^{\infty} (-\lambda)^j \frac{e^{-\lambda x-i\omega(i\lambda)t}}{\omega(i\lambda)}\cdot \frac{\lambda(i\lambda+\nu(i\lambda))}{\Delta_1(i\lambda)} \, \nu(i\lambda)(\nu(i\lambda)-i\delta) \mathcal{F}\{\varphi\} (-\omega(i\lambda))\,d\lambda \right|^2dx
\nn\\
&\equiv
\no{\mathcal L\{f\}}_{L_x^2(0, \infty)}^2
 \label{L2}
}
where $\mathcal L\{f\}(x) := \displaystyle \int_0^\infty e^{-\lambda x} f(\lambda) d\lambda$ is the Laplace transform of $f(\lambda)$ and, in our case,
\eee{
f(\lambda) = f(\lambda, t)  = \chi_{(a, \infty)}(\lambda)  \cdot (-\lambda)^j \frac{e^{-i\omega(i\lambda)t}}{\omega(i\lambda)}\cdot \frac{\lambda(i\lambda+\nu(i\lambda))}{\Delta_1(i\lambda)} \, \nu(i\lambda)(\nu(i\lambda)-i\delta) \mathcal{F}\{\varphi\} (-\omega(i\lambda)).
}
\begin{lemma}[\cite{h1929}]\label{hardy-l}
The Laplace transform $f \mapsto \mathcal L\{f\}$  is bounded from $L^2(0, \infty)$ to $L^2(0, \infty)$ with 
\eee{
\no{\mathcal L\{f\}}_{L^2(0, \infty)} \leq \sqrt \pi \no{f}_{L^2(0, \infty)}. 
}
\end{lemma}
Using the above result, we obtain
\eee{
\no{\p_x^j I^i_{\varphi,1}(t)}^2_{L^2_x(0,\infty)}
 \lesssim  \int_a^{\infty} \left|  (-\lambda)^j \frac{e^{-i\omega(i\lambda)t}}{\omega(i\lambda)}\cdot \frac{\lambda(i\lambda+\nu(i\lambda))}{\Delta_1(i\lambda)} \, \nu(i\lambda)(\nu(i\lambda)-i\delta) \mathcal{F}\{\varphi\} (-\omega(i\lambda))\right|^2 d\lambda. \label{L2-norm}
}
Since $\omega(i\lambda) = -\lambda(\lambda^2-1)^{\frac 12} \in \R$, the exponential $e^{-i\omega(i\lambda)t}$ is oscillatory implying $|e^{-i\omega(i\lambda)t}|=1$. Moreover, $\nu(i\lambda) =  -(\lambda^2-1)^{\frac 12}$, $|i\lambda+\nu(i\lambda)| = (2\lambda^2-1)^{\frac 12}$, $|\nu(i\lambda)-i\delta| = (\lambda^2-1+\delta^2)^{\frac 12}$ and $\Delta_1(i\lambda) = (\lambda^2-1)^{\frac 12} (\gamma-\lambda)+i\gamma(\delta-\lambda)$. Thus, 
\eee{\label{325}
\no{\p_x^j I^i_{\varphi,1}(t)}^2_{L^2_x(0,\infty)}  \lesssim \int_a^{\infty}  (\lambda^2)^{j} \Lambda_1^i(\lambda) \, |\mathcal{F}\{\varphi\} (-\omega(i\lambda))|^2 d\lambda 
}
where 
\eee{
\Lambda_1^i(\lambda) := \frac{(2\lambda^2-1)(\lambda^2-1+\delta^2)}{(\lambda^2-1)(\gamma-\lambda)^2+\gamma^2(\delta-\lambda)^2}.
}
Now, for fixed $\delta,\gamma \in \R$, we have $\Lambda_1^i(\lambda) \to 2$ for $|\lambda|\gg 1$, so we can take $\Lambda_1^i(\lambda)\leq 3$ for sufficiently large $a$. Next, we let $\lambda =  \sqrt{\frac{1 + \sqrt{1+4\tau^2}}{2}}$, which implies $\tau=-\omega(i\lambda)$. In turn, 
$
\lambda^2-1 
= \frac{2\tau^2}{1+\sqrt{1+4\tau^2}}$ 
so 
$\sqrt{1+4\tau^2}(1+\sqrt{1+4\tau^2})^{\frac12} d\lambda =  \sqrt2\tau \,d\tau$.
Therefore, 
\aaa{
\no{\p_x^j I^i_{\varphi,1}(t)}^2_{L^2_x(0,\infty)}  
& \lesssim \int_{a\sqrt{a^2-1}}^{\infty} \left( \frac{1+\sqrt{1+4\tau^2}}{2}\right)^j  \frac{\sqrt2\tau}{\sqrt{1+4\tau^2}(1+\sqrt{1+4\tau^2})^{\frac12}}|\mathcal{F}\{\varphi\}(\tau)|^2d\tau 
\nn\\
& \lesssim \int_{a\sqrt{a^2-1}}^{\infty} \left(1+\tau^2\right)^{\frac{2j-1}{4}}|\mathcal{F}\{\varphi\}(\tau)|^2d\tau
 \leq \no{\varphi}_{H^{\frac{2j-1}{4}}(\R)}^2 \label{rvp-est}
} 
which in light of \eqref{sn-def} yields 
\eee{\label{int-est}
	\no{I^i_{\varphi,1}(t)}_{H^s_x(0,\infty)} \lesssim \no{\varphi}_{H^{\frac{2s-1}{4}}(\R)}, \quad s \in \N_0, \ t\in [0, T].
}

Now, we will establish the estimate \eqref{int-est} for all $s \geq 0$. In that case,
\eee{\label{223}
\no{f}_{H^s(0,\infty)} = \sum_{j=0}^{\sigma}\no{\p_x^j f}_{L^2(0,\infty)} +\no{\p^{\sigma} f}_{\beta},
\quad
\sigma = \floor{s}, \ \beta = s - \sigma \in [0, 1),
}
where the  fractional seminorm $\no{\cdot}_\beta$ is $0$ for $\beta=0$ and is otherwise given by
\eee{ 
\no{f}^2_{\beta} = \int_0^\infty \int_0^\infty \frac{|f(x+z)-f(x)|^2}{z^{1+2\beta}}\,dzdx.
}
The sum on the right side of \eqref{223} can be readily estimated via \eqref{int-est}. Concerning the fractional seminorm, we compute
\eee{
\left| \p_x^{\sigma} \left( I^i_{\varphi,1}(x+z,t)-I^i_{\varphi,1}(x,t)\right)\right| 
\leq
 \int_a^\infty \frac{\lambda^\sigma e^{-\lambda x}}{|\omega(i\lambda)|}\cdot \frac{\lambda|i\lambda+\nu(i\lambda)|}{|\Delta_1(i\lambda)|} \nu(i\lambda)^2 |\nu(i\lambda)-i\delta|\cdot |\mathcal{F}\{\varphi\}(-\omega(i\lambda))|(1-e^{-\lambda z})\,d\lambda
}
and so, using Lemma \ref{hardy-l} between the $x$ and $\lambda$ integrals, we have
\eee{
\no{\p^{\sigma}_x I^i_{\varphi,1}(t)}^2_{\beta} 
\lesssim \int_a^\infty \frac{\lambda^{2\sigma+2}|i\lambda+\nu(i\lambda)|^2}{|\omega(i\lambda)|^2|\Delta_1(i\lambda)|^2}  \nu(i\lambda)^2 |\nu(i\lambda)-i\delta|^2\cdot |\mathcal{F}\{\varphi\}(-\omega(i\lambda))|^2 \left( \int_0^\infty \frac{(1-e^{-\lambda z})^2}{ z^{1+2\beta}}\,dz\right) d\lambda. \label{frac-norm}
}

Letting $\lambda z=\zeta$ in the improper integral with respect to $z$, we have
\eee{
\int_0^\infty \frac{(1-e^{-\lambda z})^2}{ z^{1+2\beta}}\,dz
=
\lambda^{2\beta}
\left(\int_0^1 \frac{(1-e^{-\zeta})^2}{\zeta^{1+2\beta}}\,d\zeta 
+
\int_1^\infty \frac{(1-e^{-\zeta})^2}{\zeta^{1+2\beta}}\,d\zeta\right).
}
For $\zeta \geq 1$, $(1-e^{-\zeta})^2 \leq 4$ so 
\eee{
\int_1^\infty \frac{(1-e^{-\zeta})^2}{\zeta^{1+2\beta}}\,d\zeta \leq 4 \int_1^\infty \frac{1}{\zeta^{1+2\beta}}\,d\zeta =\frac 2\beta.
}
For $\zeta \in [0, 1]$, by the Mean Value Theorem for the function $1-e^{-\zeta}$ on $[0,\zeta]$, there exists $c\in(0,\zeta)\subset (0,1)$ such that $1-e^{-\zeta} = e^{-c}\zeta \leq \zeta$. 
Therefore, 
\eee{
\int_0^1 \frac{(1-e^{-\zeta})^2}{\zeta^{1+2\beta}}\,d\zeta \leq \int_0^1 \frac{\zeta^2}{\zeta^{1+2\beta}}\,d\zeta = \frac{2}{2-2\beta}.
}
Overall, the $z$ integral in \eqref{frac-norm} converges, thus
\eee{
\no{\p^{\sigma}_x I^i_{\varphi,1}(t)}^2_{\beta} 
\lesssim
   \int_a^\infty  \lambda^{2s} \, \frac{\lambda^{2}|i\lambda+\nu(i\lambda)|^2}{|\omega(i\lambda)|^2|\Delta_1(i\lambda)|^2}  \nu(i\lambda)^2 |\nu(i\lambda)-i\delta|^2\cdot |\mathcal{F}\{\varphi\}(-\omega(i\lambda))|^2 d\lambda 
}
after recalling that $s = \sigma + \beta$. The above integral is comparable to the one in \eqref{L2-norm}, thus proceeding analogously we have 
\eee{\label{fn}
\no{\p^{\sigma}_x I^i_{\varphi,1}(t)}^2_{\beta} 
\lesssim \int_a^\infty \left(1+\tau^2\right)^{\frac{2s-1}4} |\mathcal{F}\{\varphi\}(\tau)|^2d\tau 
\leq
 \no{\varphi}^2_{H^{\frac{2s-1}{4}}(\R)}
}
Combining the fractional estimate \eqref{fn} with \eqref{223} and the integer estimate \eqref{int-est}, we conclude that 
\eee{\label{340}
\no{I^i_{\varphi,1}(t)}_{H^s_x(0,\infty)} \lesssim  \no{\varphi}_{H^{\frac{2s-1}{4}}(\R)}, \quad s\geq 0, \ t\in [0, T].
}

Next, we consider the case of $s \in \mathbb{Z}^{-}$, in which we define the Sobolev norm via
\eee{\label{NegS-def}
	\no{I_{\varphi,1}^i(t)}_{H_x^s(0,\infty)} 
	= 
	\sup\left\{\left|\langle I_{\varphi,1}^i(t), f \rangle \right|: f \in H_0^{-s}(0,\infty) \text{ with } \no{f}_{H^{-s}(0, \infty)}=1\right\},
}
where $\langle I_{\varphi,1}^i(t), f \rangle := \int_0^\infty I_{\varphi,1}^i(x, t) \overline{f(x)}\,dx$ and $H_0^{-s}(0, \infty)$ is the subset of $H^{-s}(0, \infty)$ whose elements have vanishing Sobolev traces at $0$ up to derivative order less than $-s-\frac 12$ (see \cite[Theorem 11.5]{lm1972}).
Thus, letting 
\eee{
A(\lambda,t)
=
\frac{e^{-i\omega(i\lambda)t}}{\omega(i\lambda)}\cdot 				\frac{i\lambda(i\lambda+\nu(i\lambda))}{\Delta_1(i\lambda)} \, 
	\nu(i\lambda)(\nu(i\lambda)-i\delta) \mathcal{F}\{\varphi\} (-\omega(i\lambda))
}
and integrating by parts while taking into account that  $\p^j f(0)=0$ for all $0 \leq j < |s|- \frac12$, we obtain
\aaa{\label{imag-dual}
	\langle  I_{\varphi,1}^i(t), f \rangle 
	& = 
	\int_a^\infty 
	A(\lambda,t) 
 	\int_0^{\infty} 
	e^{-\lambda x} \overline{f(x)}\,dx \, d\lambda
	\nn\\
	& =
	\int_a^\infty A(\lambda,t) \int_0^\infty 
	\frac{1}{\lambda^{|s|}} e^{-\lambda x} 
	\, \overline{\p_x^{|s|} f(x)} \, dx\, d\lambda 
	\nn\\
	& \leq
	\left(   
	\int_0^\infty 
	\left\lvert 
	\int_a^\infty \lambda^s e^{-\lambda x} A(\lambda,t)\,d\lambda 
	\right\rvert^2 dx
	\right)^\frac12 
}
where for the last step we have used the Cauchy-Schwarz inequality and the fact that $\no{f}_{H^{|s|}(0, \infty)}=1$.
The right side is analogous to the one in \eqref{L2} when $j = s$, so we conclude that
\eee{\label{if1n}
\no{I_{\varphi,1}^i(t)}_{H_x^s(0,\infty)}
	\leq c(s,T)
	\no{\varphi}_{H^{\frac{2s-1}4}(\R)}, \quad s \in \mathbb Z^-, \ t\in [0, T].
}
In fact, we can extend the validity of \eqref{if1n} to all $s<0$ via interpolation. This completes the estimation of $I_{\varphi,1}^i$.

Finally, we consider the integral along the portion of $\p D_1$ that consists of the negatively oriented quarter-circle $C:=\left\{ k = a e^{i\theta}, 0 \leq \theta \leq \frac \pi 2\right\}$, namely  
\aaa{
I^C_{\varphi, 1}(x, t) 
&:=
 \int_{C}  \frac{e^{ikx-i\omega(k) t}}{i\omega(k)} \cdot \frac{k\left(k+\nu(k)\right)}{\Delta_1(k)} \, \nu(k) \left(\nu(k)-i\delta\right)\mathcal F\{\varphi\}(-\omega(k)) dk
\nn\\
&= -\int_0^{\frac\pi2} \frac{e^{iae^{i\theta}x}\cdot e^{-i\omega(ae^{i\theta})t}}{i\omega(ae^{i\theta})}\cdot \frac{ae^{i\theta}\left(ae^{i\theta}+\nu(ae^{i\theta})\right)}{\Delta_1(ae^{i\theta})}\nu(ae^{i\theta})\left(\nu(ae^{i\theta})-i\delta\right)\mathcal{F}\{\varphi\}(-\omega(ae^{i\theta}))iae^{i\theta}\,d\theta. 
\label{234}
}
For any $j\in\N_0$,  
\ddd{
 \no{\p_x^j I^C_{\varphi,1}(t)}^2_{L^2_x(0,\infty)} 
\leq \int_0^\infty \biggl( \int_0^\frac\pi2 &a^j e^{-ax\sin\theta} \cdot \frac{e^{\text{Im}\left(\omega(ae^{i\theta})\right)t}}{|\omega(ae^{i\theta})|}\cdot \frac{a\left(a+\sqrt{1+a^2}\right)}{|\Delta_1(ae^{i\theta})|}\\
&
\cdot  (1+a^2)^{\frac 12}(\sqrt{1+a^2}+|\delta|)\left|\mathcal{F}\{\varphi\}\left(-\omega(ae^{i\theta})\right)\right|a\,d\theta\biggr)^2 dx. 
}
By the triangle inequality, $a\sqrt{a^2-1} \leq |\omega(ae^{i\theta})| \leq a\sqrt{a^2+1}$, so using the fact that $\text{Im}(\omega) \leq {|\omega|}$ and $\Delta_1(ae^{i\theta})\simeq a^2$ for sufficiently large $a$, we have
\eee{\label{phi-C}
 \no{\p_x^j I^C_{\varphi,1}(t)}^2_{L^2_x(0,\infty)} 
\lesssim \left(a^{j+1} e^{a\sqrt{1+a^2}T}\right)^2 \int_0^\infty \left( \int_0^\frac\pi2 e^{-ax \sin\theta}\left|\mathcal{F}\{\varphi\}\left(-\omega(ae^{i\theta})\right)\right|\,d\theta\right)^2 dx.
}
Although a direct estimation of the above integral does lead to the desired Sobolev space $H^{\frac{2s-1}{4}}$, it imposes the restriction $s \geq \frac12$. To bypass this issue, we instead proceed as follows.

Let $\eta(t) = \chi_{[0, T+1]}(t) \, e^{-t}$ and note that 
 \eee{\label{eta}
 \mathcal{F}\{\eta\}(\tau)= \frac{1-e^{-(T+1)(1+i\tau)}}{1+i\tau}.
} 
Since $\text{Im}(\omega) >0$ along $C$ for sufficiently large $a$, 
\eee{
1-e^{-(T+1)} \leq \big| 1-e^{-(T+1)(1-i\omega(ae^{i\theta})}\big| \leq 1+e^{-(T+1)}, 
\quad
a\sqrt{a^2-1}-1  \leq \left|1-i\omega(ae^{i\theta}) \right| \leq a\sqrt{a^2-1}+1 
}
so for large enough $a$ we deduce  
\eee{\label{LbUb}
	\frac{1-e^{-(T+1)}}{a^2} \lesssim |\mathcal{F}\{\eta\}(-\omega(ae^{i\theta}))| \lesssim \frac{1+e^{-(T+1)}}{a^2}.
}
Hence, for any $d \in \N$, 
\aaa{
\left| \mathcal{F}\{\varphi\}(-\omega(ae^{i\theta}))\right| 
& = 
\frac{1}{\left| \mathcal{F}\{\eta\}(-\omega(ae^{i\theta})) \right|^d} \left| \left[ \mathcal{F}\{\eta\}(-\omega(ae^{i\theta})) \right]^d\mathcal{F}\{\varphi\}(-\omega(ae^{i\theta}))\right| \nn\\
& \lesssim 
\frac{a^{2d}}{(1-e^{-(T+1)})^d} \left| \left[ \mathcal{F}\{\eta\}(-\omega(ae^{i\theta})) \right]^d\mathcal{F}\{\varphi\}(-\omega(ae^{i\theta}))\right|.
\label{243}
}
Observe that $\left[ \mathcal{F}\{\eta\}(-\omega(ae^{i\theta})) \right]^d\mathcal{F}\{\varphi\}(-\omega(ae^{i\theta}))  = \mathcal{F}\{ \eta^{*d}\ast \varphi \}
(-\omega(ae^{i\theta}))$, where $\eta^{*d}$ denotes the $d$-fold convolution of $\eta$. Furthermore, by the support of $\varphi$ and $\eta$,  $\text{supp}\{\eta^{*d}\ast \varphi\} \subseteq \left[0,(d+1)T+d\right]$. Therefore, using once again the fact that $\text{Im}(\omega) \geq 0$ on $C$ and the Cauchy-Schwarz inequality, we find
\aaa{
\left| \left[ \mathcal{F}\{\eta\}(-\omega(ae^{i\theta})) \right]^d\mathcal{F}\{\varphi\}(-\omega(ae^{i\theta})) \right|
&= 
\left| \int_0^{(d+1)T+d} e^{i\omega(ae^{i\theta})t} \left(\eta^{*d}\ast\varphi\right)(t) \,dt\right| 
\nn\\
& \leq  \sqrt{(d+1)T +d} \no{\eta^{*d}\ast\varphi}_{L^2(\R)} 
\nn\\
&\simeq
\sqrt{(d+1)T+d} 
\left( \int_\R \left| [\mathcal F \{\eta\}(\tau)]^d \mathcal{F}\{\varphi\}(\tau)\right|^2d\tau \right)^{\frac12}
}
by Plancherel's theorem. Recalling \eqref{eta} and noting that, for $\tau \in \R$, we have $\left|1-e^{-T(1+i\tau)}\right|\leq 1+e^{-(T+1)}$ and $|1+i\tau|^{2d}= \left(1+\tau^2\right)^d$, we further obtain
\eee{
\left| \left[ \mathcal{F}\{\eta\}(-\omega(ae^{i\theta})) \right]^d\mathcal{F}\{\varphi\}(-\omega(ae^{i\theta})) \right|
\lesssim
\sqrt{(d+1) T+d}\, \big(1+e^{-(T+1)}\big)^d \no{\varphi}_{H^{-d}(\R)},
\quad d\in\N.\label{Conv2}
 }
Combining this bound with \eqref{243} and \eqref{phi-C} yields
\eee{
 \no{\p_x^j I^C_{\varphi,1}(t)}_{L^2_x(0,\infty)} 
 \lesssim
 \frac{a^{j+1+2d}e^{a\sqrt{a^2+1}T}}{(1-e^{-(T+1)})^{d}}\, \sqrt{(d+1)T+d}\, \big(1+e^{-(T+1)}\big)^d \no{\varphi}_{H^{-d}(\R)} \sqrt{I(a)}
}
where
\eee{
I(a) := \int_0^\infty \left( \int_0^{\frac{\pi}{2}} e^{-ax\sin \theta }d\theta\right)^2dx
=
\int_0^1 \left( \int_0^{\frac{\pi}{2}} e^{-ax\sin \theta}d\theta\right)^2dx
+
\int_1^\infty \left( \int_0^{\frac{\pi}{2}} e^{-ax\sin \theta}d\theta\right)^2dx.
}
The first of the integrals on the right side is finite since $e^{-ax\sin \theta}\leq 1$ in view of the fact that $a,x \geq 0$ and $\sin \theta \geq 0 $ for $\theta \in \left[0,\frac{\pi}2\right]$. The second integral is also finite after using the inequality $\sin \theta \geq \frac{2\theta}{\pi}$, $0 \leq \theta \leq \frac{\pi}{2}$. 
Hence, $I(a) < \infty$ and 
\eee{
 \no{\p_x^j I^C_{\varphi,1}(t)}_{L^2_x(0,\infty)} 
\lesssim
 c(a, d, j, T)   \big(1-e^{-(T+1)}\big)^{-d}  \no{\varphi}_{H^{-d}(\R)}, \quad j \in \N_0, \ d\in\N, \ t\in [0, T].
 }
 When $s\geq 0$, any choice of $d \in \mathbb{N}$ satisfies $d\geq \frac{1-2s}4$. On the other hand, for $s < 0$, choosing $d = \left\lceil \frac{1-2s}{4} \right\rceil$ ensures $d\geq \frac{1-2s}4$. Therefore, we conclude that
\eee{
 \no{\p_x^j I^C_{\varphi,1}(t)}_{L^2_x(0,\infty)} 
\lesssim
c(a, s, j, T) \big(1-e^{-(T+1)}\big)^{-\left\lceil \frac{1-2s}{4} \right\rceil} \no{\varphi}_{H^{\frac{2s-1}{4}}(\R)}, \quad s \in \R, \ j \in \N_0, \ t\in [0, T],
\label{ic-est}
}
This result allows us to control \textit{any} Sobolev norm of $I^C_{\varphi,1}(t)$ by the $H^{\frac{2s-1}{4}}(\R)$ norm of $\varphi$ for any $s\in\R$.
\end{proof}

\section{Estimates for the Cauchy problem}
\label{cauchy-s}

In the previous section, we obtain estimates on the reduced initial-boundary value problem \eqref{lgbous-r}. In order to employ these estimates for the full linear initial-boundary value problem \eqref{lgbous}, we first establish necessary estimates for the linear Cauchy problem. We begin with the homogeneous case and then proceed to the forced problem.

\subsection{Homogeneous linear Cauchy problem}

Consider the homogeneous Cauchy problem
\ddd{\label{ivp}
&U_{tt} - U_{xx} + U_{xxxx} = 0, \quad  x \in \mathbb R, \ t\in (0, T), 
\\
&U(x,0) = U_0(x) \in H^s(\R), 
\quad
U_t(x,0) = U_1(x) \in H^{s-2}(\R),
}
with solution given by
\ddd{\label{ivp-sol}
U(x,t) & = S\big[U_0, U_1; 0\big](x, t)
\\
&:= \frac1{2\pi} \int_\R \frac{e^{ikx}}{2i\omega} 
\left\{ 
e^{i\omega t} \left[ \mathcal F \{U_1\}(k)+i\omega\mathcal F\{U_0\}(k)\right] - e^{-i\omega t} \left[ \mathcal F \{U_1\}(k)-i\omega\mathcal F\{U_0\}(k)\right]
\right\}dk.
}

\begin{theorem}[Homogeneous Cauchy estimates]\label{ivp-t}
For any $s\in \R$, the solution $U = S\big[U_0, U_1; 0\big]$ to the homogeneous linear Cauchy problem \eqref{ivp} admits the space and time estimates
\aaa{\label{ivp-se-t}
&\no{U(t)}_{H^s_x(\R)} 
+
\no{\p_t U(t)}_{H^{s-2}_x(\R)} 
\leq
c(T) \left(\no{U_0}_{H^{s}(\R)}  +  \no{U_1}_{H^{s-2}(\R)} \right),
\quad t\in [0, T], 
\\
&\no{\p_x^j U(x)}_{H_t^{\frac{2s-2j+1}{4}}(0, T)} 
\leq
c_j(s, T) 
\left(
 \no{U_0}_{H^s(\R)} + \no{U_1}_{H^{s-2}(\R)}\right),
 \quad
 j \in \N_0, \ x\in \R,
 \label{ivp-te-t}
}
where $c(T), c_j(s, T)>0$ are constants that remain bounded as $T\to 0^+$. 
\end{theorem}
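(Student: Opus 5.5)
The plan is to work entirely on the Fourier side, using formula \eqref{ivp-sol}. Here $\omega=\omega(k)=k\sqrt{1+k^2}$, the dispersion relation for the symbol $k^2+k^4$; it is real, odd and strictly increasing, with $\omega'(k)=\frac{1+2k^2}{(1+k^2)^{1/2}}\geq 1$. First rewrite \eqref{ivp-sol} in the equivalent form
\[
\mathcal F\{U(t)\}(k)=\cos(\omega t)\,\mathcal F\{U_0\}(k)+\frac{\sin(\omega t)}{\omega}\,\mathcal F\{U_1\}(k).
\]

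\emph{Space estimate \eqref{ivp-se-t}.} By the Fourier characterization of $H^s(\R)$, it suffices to bound the two multipliers. The first is trivial: $|\cos(\omega t)|\leq 1$. For the second, use $|\sin(\omega t)/\omega|\leq \min\{t,1/|\omega|\}$.
\begin{itemize}
\item For $|k|\leq 1$ the bound $t\leq T$ is used together with $(1+k^2)^{s}\simeq (1+k^2)^{s-2}$.
\item For $|k|\geq 1$ the bound $1/|\omega|\simeq (1+k^2)^{-1}$ converts $H^{s-2}$ into $H^s$.
\end{itemize}
This gives $c(T)\simeq 1+T$. For $\partial_t U$ the multipliers are $-\omega\sin(\omega t)$ on $\mathcal F\{U_0\}$ and $\cos(\omega t)$ on $\mathcal F\{U_1\}$. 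Since $\omega^2=k^2(1+k^2)\leq(1+k^2)^2$, both are bounded in the $H^{s-2}$ weight.

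\emph{Time estimate \eqref{ivp-te-t}.} Fix $x\in\R$ and apply $\partial_x^j$, which brings a factor $(ik)^j$. Split the $k$-integral with a partition into a low-frequency part $|k|\leq 1$ and a high-frequency part $|k|\geq 1$.

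In the low-frequency part we keep the combination $\sin(\omega t)/\omega$ intact, so there is no singularity at $k=0$. The part is then a smooth function of $t$, and every $t$-derivative is bounded by $c(T)\int_{|k|\leq1}(|\mathcal F\{U_0\}|+|\mathcal F\{U_1\}|)\,dk$. By Cauchy--Schwarz this is at most $c(T)(\|U_0\|_{H^s}+\|U_1\|_{H^{s-2}})$, since all weights are comparable to $1$ there. Hence its $H^m(0,T)$ norm, for an integer $m\geq\frac{2s-2j+1}{4}$, is controlled; multiplying by a cutoff equal to $1$ on $[0,T]$ handles the restriction.

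In the high-frequency part we split into the four terms $e^{\pm i\omega t}$ times $k^j\mathcal F\{U_0\}$ or $k^j\omega^{-1}\mathcal F\{U_1\}$, each restricted to $k>1$ or $k<-1$. In each term we change variables $\tau=\pm\omega(k)$, which is a bijection onto $|\tau|>\sqrt2$. This exhibits the term as $\int_\R e^{i\tau t}G(\tau)\,d\tau$, with
\[
G(\tau)=\chi\, e^{ikx}\,k^j\,\mathcal F\{U_0\}(k)\,\frac{dk}{d\tau}
\]
(or the analogous expression for $U_1$). This is an inverse Fourier transform in $t$ defined for all $t\in\R$, so its $H^\sigma_t(0,T)$ norm is bounded by its $H^\sigma_t(\R)$ norm $\|(1+\tau^2)^{\sigma/2}G\|_{L^2}$. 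This holds for every $\sigma\in\R$, since $H^\sigma(0,T)$ carries the restriction norm. Since $|e^{ikx}|=1$, the bound is uniform in $x$. Changing back to $k$ with $d\tau=\omega'(k)\,dk$, $\omega'\simeq|k|$ and $|\tau|\simeq k^2$, we obtain
\[
\int (1+\tau^2)^{\sigma}|G|^2\,d\tau\simeq\int_{|k|\geq1}|k|^{4\sigma+2j-1}|\mathcal F\{U_0\}(k)|^2\,dk .
\]
With $\sigma=\frac{2s-2j+1}{4}$ the exponent is exactly $2s$. For the $U_1$ terms the extra factor $\omega^{-2}\simeq k^{-4}$ gives the $H^{s-2}$ norm.

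The main obstacle is the treatment of small $k$. There $1/\omega$ is singular, and $\tau=\omega(k)$ degenerates near $\tau=0$, so the clean change of variables is unavailable. This is why the low frequencies are handled directly on the finite interval $(0,T)$, without passing through the $t$-Fourier transform, at the price of $T$-dependent constants. These constants stay bounded as $T\to0^+$, as required.
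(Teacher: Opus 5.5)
Your proposal is correct and follows essentially the same route as the paper. The space estimate uses Fourier-side multiplier bounds split at $|k|=1$. The time estimate uses the change of variables $\tau=\pm\omega(k)$ at high frequencies and a direct bound on a finite time interval at low frequencies, where $\sin(\omega t)/\omega$ is kept intact. The differences are cosmetic: you keep the factor $(ik)^j$ instead of reducing to $j=0$ via $U_0^{(j)},U_1^{(j)}$, and you use a cutoff extension instead of the physical-space norm on $(0,T+1)$. Also, with the paper's branch cut $\omega$ is even on $\R$ rather than odd, which is harmless because only $\cos(\omega t)$ and $\sin(\omega t)/\omega$ enter the formula.
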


\begin{proof}
We first establish the space estimate \eqref{ivp-se-t}. 
Noting that
\eee{
\mathcal F \{U\} (k,t)=  \frac{1}{2i\omega} 
\left\{ 
e^{i\omega t} \left[ \mathcal F \{U_1\}(k)+i\omega\mathcal F\{U_0\}(k)\right] - e^{-i\omega t} \left[ \mathcal F \{U_1\}(k)-i\omega\mathcal F\{U_0\}(k)\right]
\right\},
}
we have
\aaa{
\no{U(t)}^2_{H^s_x(\R)} 
&= 
\int_\R   \frac{ \left(1+k^2\right)^s  }{4\omega^2} \left| e^{i\omega t} \left[ \mathcal F \{U_1\}(k)+i\omega\mathcal F\{U_0\}(k)\right] - e^{-i\omega t} \left[ \mathcal F \{U_1\}(k)-i\omega\mathcal F\{U_0\}(k)\right]\right|^2 dk 
\nn\\
&\leq
\int_\R  \left(1+k^2\right)^s  \frac{\sin^2(\omega t)}{\omega^2}  \left|\mathcal F \{U_1\}(k)\right|^2 dk + 
\int_\R  \left(1+k^2\right)^s \cos^2(\omega t) \left|\mathcal F \{U_0\}(k)\right|^2 dk \label{U-Sobnorm0}.
}
Since $\omega \in \R$ for $k\in \R$, we have $\cos^2(\omega t) \leq 1$ so
\eee{
\int_\R  \left(1+k^2\right)^s \cos^2(\omega t) \left|\mathcal F \{U_0\}(k)\right|^2 dk \leq \int_\R  \left(1+k^2\right)^s \left|\mathcal F \{U_0\}(k)\right|^2 dk = \no{U_0}^2_{H^s(\R)}.
}
Further, breaking the first integral in \eqref{U-Sobnorm0} for $k$ near and away from $0$, we have 
\eee{
\int_\R  \left(1+k^2\right)^s  \frac{\sin^2(\omega t)}{\omega^2}  \left|\mathcal F \{U_1\}(k)\right|^2 dk = \left( \int_{|k|>1} + \int_{|k|<1} \right) \left(1+k^2\right)^s  \frac{\sin^2(\omega t)}{\omega^2}  \left|\mathcal F \{U_1\}(k)\right|^2 dk
}
For $|k|>1$, we have $\omega^2=k^2\left(1+k^2\right)\geq \frac12 \left(1+k^2\right)^2$ and $\sin^2(\omega t)\leq 1$, thus 
\eee{
\int_{|k|>1}  \left(1+k^2\right)^s \frac{\sin^2(\omega t)}{\omega^2}  \left|\mathcal F \{U_1\}(k)\right|^2 dk 
\leq
2  \int_{|k|>1}  \left(1+k^2\right)^{s-2}   \left|\mathcal F \{U_1\}(k)\right|^2 \leq 
2 \no{U_1}^2_{H^{s-2}(\R)}.
}
For $|k|<1$, we use the fact that $\frac{\sin^2(\omega t)}{\omega^2} \leq t^2$ and $1 \leq 4 \left(1+k^2\right)^{-2}$ to deduce
\eee{
\int_{|k|<1}  \left(1+k^2\right)^s  \frac{\sin^2(\omega t)}{\omega^2}  \left|\mathcal F \{U_1\}(k)\right|^2 dk
 \leq 4t^2 \int_{|k|<1}  \left(1+k^2\right)^{s-2}   \left|\mathcal F \{U_1\}(k)\right|^2 dk  \leq 4T^2 \no{U_1}^2_{H^{s-2}(\R)}.
}
Therefore,  \eqref{U-Sobnorm0} yields
\eee{\label{ivp-se}
\no{U(t)}_{H^s_x(\R)} 
\leq
\no{U_0}_{H^{s}(\R)} + 2 \sqrt{1+T^2} \no{U_1}_{H^{s-2}(\R)}, \quad  s\in\R, \ t\in [0, T].
}
The norm $\no{\p_t U(t)}_{H^{s-2}_x(\R)}$ can be estimated in a similar way --- in fact, as $\omega$ is no longer present in the denominator of the corresponding integrand, there is no need to break near and away from the origin. Overall, this yields the space estimate \eqref{ivp-se-t}.

We proceed to the time estimate \eqref{ivp-te-t}. Differentiating \eqref{ivp-sol}, we have
\aaa{\label{ivp-solx}
\p_x^j U(x,t) 
&= \frac1{2\pi} \int_\R \frac{e^{ikx}}{2i\omega} 
\left\{ 
e^{i\omega t} \left[ \mathcal F \{U_1^{(j)}\}(k)+i\omega\mathcal F\{U_0^{(j)}\}(k)\right] - e^{-i\omega t} \left[ \mathcal F \{U_1^{(j)}\}(k)-i\omega\mathcal F\{U_0^{(j)}\}(k)\right]
\right\}dk
\nn\\
&=
S\big[U_0^{(j)}, U_1^{(j)}; 0\big](x, t).
}
Thus, it suffices to prove \eqref{ivp-te-t} for $j=0$, as that base case can then be employed with $s$ replaced by $s-j$ and with $U_0, U_1$ replaced by $U_0^{(j)}, U_1^{(j)}$ to establish the general case of $j\in\N$. 
We write $U$ as the sum of four integrals 
\eee{
U = I^+_0 + I^-_0 + I^+_1 - I^-_1 
}
where 
\eee{
I_0^{\pm}(x,t) = \frac1{4\pi} \int_\R e^{ikx\pm i\omega t} \mathcal{F}\{U_0\}(k)\,dk,
\quad
I_1^{\pm}(x,t) = \frac1{2\pi} \int_\R \frac{e^{ikx\pm i\omega t}}{2i\omega} \mathcal{F}\{U_1\}(k)\,dk.
}

The terms $I_0^\pm$ can be analyzed in the same way, so we focus on 
$I_0^+$. We write
\eee{
I_0^+(x,t) = \frac{1}{4\pi}
\biggl(\underbrace{ \int_{-\infty}^{-1}}_{J_0} \enspace  
+ \underbrace{\int_{-1}^1}_{K_0} \enspace + \underbrace{\int_1^\infty}_{L_0} \biggr)
e^{ikx+i\omega t} \mathcal{F}\{U_0\}(k)\,dk.
}
The integrals $J_0$ and $L_0$ are similar, so we only provide the details for the former one. Letting 
\eee{\label{cov}
k = -\xi(\tau), \quad \xi(\tau) := \sqrt{\frac{-1 + \sqrt{1+4\tau^2}}{2}} \geq 0,
}
which implies $\tau \in [\sqrt 2, \infty)$ and $\omega = \tau$ (note that, according to the choice of branch cut for the square root $\left(1+k^2\right)^{\frac 12}$,  $\omega \geq 0$ for $k\in\mathbb R$), we have
\eee{
J_0(x, t) = \frac{1}{4\pi} \int_{\sqrt2}^\infty  e^{-i\xi(\tau)x} e^{i\tau t} \mathcal F\{U_0\}(-\xi(\tau)) \xi'(\tau)\,d\tau.
}
In turn, using also the bound  $|\xi'(\tau)| \lesssim \frac{|\tau|}{\sqrt{1+\tau^2}\left(1+\tau^2\right)^\frac14} \simeq \left(1+\tau^2\right)^{-\frac14}$ for $|\tau| \geq \sqrt 2$, we obtain
\aaa{
\no{J_0(x)}^2_{H_t^m(\R)} 
&\simeq 
\int_{\sqrt2}^\infty \left(1+\tau^2\right)^m  |\mathcal F\{U_0\}(-\xi(\tau))|^2 |\xi'(\tau)|^2\,d\tau
\nn\\
&\lesssim 
 \int_{\sqrt2}^\infty \left(1+\tau^2\right)^{m-\frac12}  |\mathcal F\{U_0\}(-\xi(\tau))|^2 d\tau.
}
Switching back to $k$ via letting $k= -\xi(\tau)$, which implies $\tau = \omega$, we find
\aaa{
\no{J_0(x)}^2_{H_t^m(\R)} 
 & \lesssim 
 \int_{-\infty}^{-1} \left(1+\omega^2\right)^{m-\frac12}  \left|\mathcal F\{U_0\}(k)\right|^2   \frac{1+2k^2}{\left(1+k^2\right)^{\frac12}} dk \nn\\
& \leq
2 \int_{-\infty}^{-1} \left(1+k^2+k^4\right)^{m-\frac12} \left(1+k^2\right)^{\frac12} \left|\mathcal F\{U_0\}(k)\right|^2 dk
\nn\\ 
 &\simeq
 \int_{-\infty}^{-1} \left(1+k^2\right)^{2m-\frac12}  \left|\mathcal F\{U_0\}(k)\right|^2 dk  \leq 
 \no{U_0}^2_{H^{2m-\frac12}(\R)}. \label{J0-est}
 }
Regarding the integral $K_0$, for any $\ell \in \N_0$, using the Cauchy-Schwarz inequality we have
\aaa{
\no{\p_t^\ell K_0(x)}_{L^2_t(0,T)}^2  
&=
\int_0^T \left| \int_{-1}^1 e^{ikx+i\omega t}  (i\omega)^\ell  \mathcal F\{U_0\}(k)\,dk\right|^2dt 
 \nn \\
&\leq
T \left( \int_{-1}^1 \omega^{2\ell}  \left(1+k^2\right)^{-s} dk \right)
\left(\int_0^1 \left(1+k^2\right)^s  \left|\mathcal F\{U_0\}(k)\right|^2 dk \right) 
\leq 
c_{s,\ell}^2 \, T  \no{U_0}^2_{H^s(\R)}. \label{Const}
}
Thus, for any $m \leq 0$, using \eqref{Const} with $\ell = 0$ yields
\eee{
\no{K_0(x)}_{H_t^m(0, T)} \leq \no{K_0(x)}_{L_t^2(0, T)} \leq  c_s \, T  \no{U_0}_{H^s(\R)}. 
}
Furthermore, for any $m>0$, \eqref{Const} and the analogue of definition \eqref{sn-def} with $(0, T+1)$ instead of $(0, \infty)$ imply
\eee{
\no{K_0(x)}_{H_t^m(0, T)} 
\leq  \no{K_0(x)}_{H_t^{\ceil{m}}(0, T+1)} 
= 
c_{s, T+1} \sum_{\ell=0}^{\ceil m} \no{\p_t^\ell K_0(x)}_{L_t^2(0, T+1)} 
\leq  
c_{s, T+1} \, c_{s, m} \, \sqrt{T+1}  \no{U_0}_{H^s(\R)}.
}
Here, we note that we have used the physical space equivalent Sobolev norm and, importantly, the associated equivalence constant $c_{s, T+1}$ remains bounded as $T\to 0^+$ since in that limit $(0, T+1)$ reduces to the non-degenerate interval $(0, 1)$.

The estimation of $I_1^\pm$ is similar to the one of $I_0^\pm$ except when integrating near $k=0$, due to the presence of $\omega$ in the denominator of the relevant integrand. In particular, the only term which needs our attention is the difference 
\eee{
K_1(x, t) :=
\int_{-1}^1
e^{ikx}
\, \frac{e^{i\omega t}-e^{-i\omega t}}{2i\omega} \,  \mathcal{F}\{U_1\}(k)\,dk
=
\int_{-1}^1
e^{ikx}
\, \frac{\sin(\omega t)}{\omega} \,  \mathcal{F}\{U_1\}(k)\,dk.
}
Using the inequality $\left|\frac{\sin(\omega t)}{\omega}\right| \leq t$ and then the Cauchy-Schwarz inequality, we obtain
\aaa{
\no{K_1(x)}_{L^2_t(0,T)}^2  
&=
\int_0^T \left| \int_{-1}^1
e^{ikx}
\, \frac{\sin(\omega t)}{\omega} \,  \mathcal{F}\{U_1\}(k)\,dk 
\right|^2dt  
\nn\\
&\leq
\int_0^T t^2 \left( \int_{-1}^1
  \left|\mathcal{F}\{U_1\}(k)\right| dk 
\right)^2dt  
\nn\\
&\lesssim
T^3 \left( \int_{-1}^1
  \left(1+k^2\right)^{-(s-2)} dk 
\right) \left( \int_{-1}^1
 \left(1+k^2\right)^{s-2}   \left|\mathcal{F}\{U_1\}(k)\right|^2 dk 
\right)
\leq
c_s T^3 \no{U_1}_{H^{s-2}(\R)}^2.
}
The estimation of $\no{\p_t^\ell K_1(x)}_{L_t^2(0, T)}$ for any $\ell \in \N$ can be performed in the same way with \eqref{Const}, since the factor of $\omega$ in the denominator of the integrand of $K_1$ cancels upon differentiation with respect to $t$. 
\end{proof}

\subsection{Forced linear Cauchy problem}

Next, we turn our attention to the forced Cauchy problem with zero initial data
\ddd{\label{flb-ivp}
Z_{tt} - Z_{xx} + Z_{xxxx} &= F(x,t), \quad  x \in \mathbb R, \ t\in (0, T), 
\\
Z(x,0) = Z_t(x,0) & =  0,
}
whose solution is given by
\eee{\label{flb-ivp-sol}
 Z(x,t)= 
 S\big[0, 0; F\big](x, t)
 :=
  \frac{1}{2\pi} \int_{\R} \frac{e^{ikx}}{2i\omega} \left\{e^{i\omega t } \int_0^t e^{-i\omega t'} \mathcal F \{F\}(k,t')\,dt' - e^{-i\omega t} \int_0^t e^{i\omega t'} \mathcal F \{F\}(k,t')\,dt'  \right\}dk. 
 }
The above formula can also be expressed in  Duhamel form as
\eee{ \label{W-sol}
S\big[0, 0; F\big](x, t)
=
\int_0^t  S[0,F(x,t');0](x,t-t')\,dt',
}
where for each $t'\in (0, T)$ the operator $S[0,F(x,t');0](x,t-t')$ is the homogeneous solution operator \eqref{ivp-sol} with $U_0 = 0$, $U_1 = F(x,t')$ and $t$ shifted to $t-t'$. 

\begin{theorem}[Forced Cauchy estimates]\label{fivp-t}
The solution $Z = S\big[0, 0; F\big]$ to the forced linear Cauchy problem~\eqref{flb-ivp} admits the space and time estimates
\aaa{\label{fivp-se-t}
&\no{Z(t)}_{H^s_x(\R)} 
+
\no{\p_t Z(t)}_{H^{s-2}_x(\R)} 
\leq
c(T)\no{F}_{L_t^1((0, T); H_x^{s-2}(\R))}, \quad s\in \R, \ t\in [0, T],
\\
&  \no{\p_x^j Z(x)}_{H_t^{\frac{2s-2j+1}{4}} (0,T)} 
 \leq
 c_j(s, T)
 \begin{cases}
\no{F}_{L_t^1((0,T); H_x^{s-2}(\R))}, 
&  -\frac 12 + j \leq s \leq \frac 32 + j, 
\\
\no{F}_{L_t^2((0,T); H_x^{s-2}(\R))}, 
& \frac 32 + j \leq s \leq \frac 72 + j,  
\end{cases}
\quad j \in \N_0, \ x\in \R.
\label{fivp-te-t}
}
\end{theorem}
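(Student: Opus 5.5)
The plan is to reduce everything to Theorem \ref{ivp-t} through the Duhamel representation \eqref{W-sol}, and to treat the higher-regularity range of the time estimate separately by working directly with the Fourier formula \eqref{flb-ivp-sol}.

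\emph{Space estimate \eqref{fivp-se-t}.} For each $t'\in(0,t)$, the integrand $S[0,F(\cdot,t');0](\cdot,t-t')$ solves the homogeneous problem with $U_0=0$ and $U_1=F(\cdot,t')$. By the space estimate \eqref{ivp-se-t} of Theorem \ref{ivp-t}, its $H^s_x$ norm plus the $H^{s-2}_x$ norm of its time derivative is at most $c(T)\no{F(t')}_{H^{s-2}(\R)}$. The time derivative of $Z$ has the same Duhamel form, because the boundary term at $t'=t$ vanishes (the homogeneous solution with $U_0=0$ is zero at time $0$). Minkowski's inequality for the Bochner integral in $t'$ then gives \eqref{fivp-se-t} with the $L^1_t$ norm of $F$.

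\emph{Time estimate, first range.} As in the proof of Theorem \ref{ivp-t}, $\p_x^j$ commutes with $S$, so it suffices to take $j=0$, with $s$ replaced by $s-j$. For $-\frac12\le s\le\frac32$ the target exponent $m=\frac{2s+1}{4}$ lies in $[0,1]$. Write $Z(x,t)=\int_0^T \chi_{(0,t)}(t')\,V_{t'}(x,t-t')\,dt'$, where $V_{t'}:=S[0,F(t');0]$. The time estimate \eqref{ivp-te-t} controls $\no{V_{t'}(x,\cdot)}_{H^m_t(0,T)}$ by $\no{F(t')}_{H^{s-2}}$. What remains is to show that the truncated and shifted function $t\mapsto \chi_{t>t'}V_{t'}(x,t-t')$ has $H^m_t(0,T)$ norm bounded by the same quantity.
\begin{itemize}
\item For $0\le m<\frac12$, multiplication by the Heaviside function is bounded on $H^m$, so this holds.
\item For $\frac12<m\le 1$, we use that $V_{t'}(x,0)=0$. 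This makes the zero extension to $t<t'$ bounded in $H^m$ (for $m=\frac12$ we interpolate). The vanishing comes precisely from $U_0=0$.
\end{itemize}
Minkowski's inequality then yields the $L^1_t$ bound.

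\emph{Time estimate, second range, and the main obstacle.} For $\frac32\le s\le\frac72$ we have $m\in(1,2]$. Now $\p_t V_{t'}(x,0)=F(x,t')$ at $x$ need not vanish, so the truncation argument breaks down. Instead, we will compute $\p_t Z=\int_0^t \p_tV_{t'}(x,t-t')\,dt'$, with no boundary term since $V_{t'}(x,0)=0$, and then
\[
\p_t^2Z=F(x,t)+\int_0^t\p_t^2V_{t'}(x,t-t')\,dt'.
\]
Alternatively, we will use the equation: $\p_t^2Z=\p_x^2Z-\p_x^4Z+F$. Either way, $\no{Z(x)}_{H^m_t}$ reduces to $\no{Z(x)}_{L^2_t}$ plus the $H^{m-1}_t$ norm of $\p_tZ$. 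The latter is a Duhamel integral with vanishing initial value to which the first-range argument applies, but with the pointwise-in-$x$ term $F(x,t)$ entering the $H^{m-2}_t$ piece.

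We expect this pointwise trace term to be the main obstacle. Controlling $F(x,\cdot)$ in $H_t^{m-2}\subset L^2_t$ (note $m-2\le 0$) via Sobolev embedding $H^{s-2}_x\hookrightarrow L^\infty_x$ would need $s>\frac52$. So we will first try handling it in Fourier variables: estimate $\int_\R |\mathcal F\{F\}(k,t)|\,dk$ by Cauchy--Schwarz with weight $(1+k^2)^{s-2}$, combined with a change of variables $\tau=\omega(k)$ as in \eqref{J0-est} applied to \eqref{flb-ivp-sol} in $(k,\tau)$. This naturally produces the $L^2_t$ rather than $L^1_t$ norm of $F$ and explains the switch of norm. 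Small frequencies $|k|<1$ will be handled by the Cauchy--Schwarz argument used for $K_0,K_1$.
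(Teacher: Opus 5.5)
Your space estimate matches the paper's argument. Your treatment of $-\frac12\le s\le\frac32$ is a valid variant: the paper proves only the endpoints $m=0,1$ (Minkowski plus \eqref{ivp-te-t} with $s=-\frac12,\frac32$) and interpolates, while you truncate by the Heaviside function at each fractional order, using $V_{t'}(x,0)=0$ when $m>\frac12$.

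The gap is the range $\frac32\le s\le\frac72$, which your proposal does not close. You assert that $\p_tZ$ is ``a Duhamel integral with vanishing initial value to which the first-range argument applies.'' This is false once $m-1>\frac12$. The truncation needs the \emph{integrand} to vanish at the cut, and $\p_tV_{t'}(x,0)=F(x,t')$; the fact that $\p_tZ(x,0)=0$ is irrelevant. Your Fourier-side remedy for the term $F(x,t)$ also fails where you need it. Cauchy--Schwarz on $\int_\R|\mathcal F\{F\}(k,t)|\,dk$ with weight $(1+k^2)^{s-2}$ requires $\int_\R(1+k^2)^{2-s}dk<\infty$, i.e.\ $s>\frac52$. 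That is exactly the Sobolev-embedding condition you were trying to avoid. For $s\le\frac52$, no argument that isolates $F(x,\cdot)$ can work in any time norm: take $F=a(t)b(x)$ with $b\in H^{s-2}(\R)$ singular at $x$; then $\no{F}_{L^2_tH^{s-2}_x}$ is finite while $F(x,\cdot)$ is not even defined. (Also, for $m\le2$ the inclusion is $L^2_t\subset H^{m-2}_t$, not the reverse.)

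The missing idea is that the pointwise term only has to be estimated where $s-2>\frac12$. The paper does this at the single endpoint $s=\frac72$. From $\p_t^2Z=F(x,t)+\int_0^t\p_t^2V_{t'}(x,t-t')\,dt'$, it bounds $\no{F(x)}_{L^2_t(0,T)}\le\no{F}_{L^2_tL^\infty_x}\lesssim\no{F}_{L^2_tH^{3/2}_x}$, and the integral by Minkowski and \eqref{ivp-te-t} with $s=\frac72$. It combines this with the $m=1$ bound, which is an $L^1_t$ and hence $L^2_t$ bound on $(0,T)$. Interpolating the linear map $F\mapsto Z(x)$ from $L^2_t((0,T);H_x^{2m-\frac52}(\R))$ to $H^m_t(0,T)$ between $m=1$ and $m=2$ then gives the whole second range. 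This is also the real reason for the switch from $L^1_t$ to $L^2_t$.

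Within your own framework the same point appears as follows. Heaviside truncation in $H^m$ only requires $V_{t'}(x,0)=0$ for all $\frac12<m<\frac32$, so your first-range argument already covers $s<\frac52$ (even with $L^1_t$). The term $F(x,t)$ only arises for $m>\frac32$, i.e.\ exactly when $s>\frac52$ and Sobolev embedding is available, and $s=\frac52$ follows by interpolation. The obstacle you flagged comes from applying the second-derivative splitting on the whole range $m\in[1,2]$.
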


\begin{proof}
For the space estimate \eqref{fivp-se-t}, starting from the Duhamel representation \eqref{W-sol}, we combine the triangle inequality with the corresponding homogeneous space estimate \eqref{ivp-se} to readily infer
\aaa{
\no
{Z(t)}_{H^s_x(\R)} 
&\leq
\int_0^t  \no{S[0,F(x,t');0](t-t')}_{H_x^s(\R)} dt'
\leq
2 \sqrt{1+T^2} \no{F}_{L^1_t((0,T);H^{s-2}_x(\R))}, \quad s\in \R, \ t\in [0, T].\label{fivp-se}
}
In addition, using the Leibniz rule while noting that $S[0,F(x,t');0](x,t-t')\big|_{t'=t} = 0$, we have
\eee{\label{zt}
\p_t Z(x,t) 
=
 \int_0^t  \p_t S[0,F(x,t');0](x,t-t')\,dt'
}
so by the triangle inequality and the homogeneous space estimate \eqref{ivp-se-t}, we have
\eee{\label{fivp-se-der}
\no{\p_t Z(t)}_{H_x^{s-2}(\R)}
\leq
 \int_0^t  \no{\p_t S[0,F(x,t');0](x,t-t')}_{H_x^{s-2}(\R)} dt'
\lesssim
\no{F}_{L_t^1((0, T); H_x^{s-2}(\R))}, \quad s\in \R, \ t\in [0, T].
}
Together, \eqref{fivp-se} and \eqref{fivp-se-der} imply the desired space estimate \eqref{fivp-se-t}.

The proof of the time estimate \eqref{fivp-te-t} is more involved. First, as in the case of the homogeneous Cauchy problem (see \eqref{ivp-solx}), differentiating \eqref{flb-ivp-sol} we observe that $\p_x^j Z(x, t) \equiv \p_x^j S\big[0, 0; F\big](x, t) = S\big[0, 0; \p_x^j F\big]$. Thus, it suffices to establish \eqref{fivp-te-t} for $j=0$. Similarly to \eqref{223}, for each $x\in\R$ and each $m \in \N_0$ we employ the physical space Sobolev norm  defined by 
\eee{
\no{Z(x)}_{H_t^m(0,T)} = c_{m, T} \sum_{j=0}^{m} \no{\p_t^j Z(x)}_{L^2_t(0,T)}.
}
For $m=0$, observing that integrand in \eqref{W-sol} makes sense for all $(t-t') \in \R$, we can augment the upper bound of the $t'$-integral from $t$ to $T$ to obtain
\aaa{
\no{Z(x)}^2_{L^2_t(0,T)} 
& \leq
\int_{t=0}^T \left( \int_{t'=0}^t \Big\lvert S[0,F(x,t');0](x,t-t')\Big\rvert\,dt'\right)^2 dt 
\nn\\
 &\leq
 \left[
 \int_{t'=0}^T \Big\|  S[0,F(x,t');0](x,t-t')\Big\|_{L^2_t(0,T)}\,dt'
 \right]^2.
 \label{333}
}
Since $|e^{i\omega t'}|=1$ for $k\in\R$, for each fixed $t'$ the $L^2$ norm with respect to $t$ can be estimated via the homogeneous time estimate \eqref{ivp-te-t} with $U_0 = 0$, $U_1 = F(t')$, $j=0$ and $s= -\frac 12$, giving rise to the estimate
\eee{
\no{Z(x)}_{L^2_t(0,T)} 
 \leq
c(T) \no{F}_{L^1_t((0,T);H_x^{-\frac 52}(\R))}.
}
For $m=1$, recalling \eqref{zt} and proceeding similarly to \eqref{333}, we have
\aaa{
\no{\p_t Z(x)}_{L^2_t(0,T)}
&\leq
\int_{t'=0}^T \no{\p_t S[0,F(x,t');0](x ,t-t')} _{L^2_t (0,T)}dt'
\nn \\
&\leq 
\int_{t'=0}^T \no{S[0,F(x,t');0](x, t-t')} _{H^1_t (0,T)}dt'. 
}
Thus, using once again the homogeneous time estimate \eqref{ivp-te-t}, now for $s=\frac 32$, we obtain
\eee{
\no{\p_t Z(x)}_{L^2_t(0,T)}
\leq c(T) \no{F}_{L^1_t((0,T);H_x^{-\frac12}(\R))}.
}
Moreover, since we have shown that $Z(x)$ is a linear map from  $L^1_t((0, T); H_x^{2m-\frac 52}(\R))$ to $H_t^m(0, T)$ for $m=0,1$, we can interpolate to extend the validity of this result to $0<m<1$. This yields the top half of estimate \eqref{fivp-te-t}. 

Finally, for $m=2$, differentiating \eqref{zt} via the Leibniz rule, we have 
\eee{
\p^2_t Z(x,t) 
=
F(x,t) + \int_0^t  \p^2_t S[0,F(x,t');0](x,t-t')\,dt'.
}
Hence, proceeding as above via Minkowski's integral inequality, we find 
\eee{
\no{\p_t^2 Z(x)}_{L^2_t(0,T)}
\leq
\no{F(x)}_{L_t^2(0, T)}
+
\int_{t'=0}^T \no{S[0,F(x,t');0](x,t-t')}_{H_t^2(0,T)}dt'.
}
The second term can be handled via \eqref{ivp-te-t} with $s=\frac 72$  while, in view of the Sobolev embedding theorem, the first term admits the bound
\eee{
\no{F(x)}_{L_t^2(0, T)}
\leq
\no{F}_{L_t^2((0, T); L_x^\infty(\R))}
\leq
\no{F}_{L_t^2((0, T); H_x^{\frac 32}(\R))}.
}
Therefore, we overall have
\eee{
\no{\p_t^2 Z(x)}_{L^2_t(0,T)}
\leq
\no{F}_{L_t^2((0, T); H_x^{\frac 32}(\R))}
+
c(T)  \no{F}^2_{L^1_t((0,T);H_x^{\frac32}(\R))}.
}
Having shown that $Z(x)$ is a linear map from  $L^2_t((0, T); H_x^{2m-\frac 52}(\R))$ to $H_t^m(0, T)$ for $m=1, 2$, we can interpolate to extend the validity of this result to $1<m<2$. This yields the bottom half of estimate \eqref{fivp-te-t}. 
\end{proof}

In addition to the time estimate \eqref{fivp-te-t} of Theorem \ref{fivp-t}, it will turn out useful to also establish the following time estimate, which is valid for lower values of $s \in (-\frac 32, \frac 12)$ and provides the analogue of \cite[Lemma 11]{h2005}. 
\begin{theorem}[Forced Cauchy time estimate for low $s$]\label{hom-te-t}
Let $j\in \N_0$ and suppose $-\frac 32 + j < s < \frac 12 + j$. Then, the solution $z = S\big[0, 0; \mathfrak F\big]$ to the forced linear Cauchy problem 
\ddd{\label{flb-ivp2}
z_{tt} - z_{xx} + z_{xxxx} &= \mathfrak F(x,t), \quad  x \in \mathbb R, \ t\in \R, 
\\
z(x,0) = z_t(x,0) & =  0,
}
admits the time estimate
\begin{equation}\label{fivp-te-low-t}
\sup_{x\in\mathbb R} \left\| \p_x^j z(x)\right\|_{\dot H_t^{\frac{2s-2j+1}{4}}(\mathbb R)}
\leqslant
c_{s, j} \left\|\mathfrak F\right\|_{L_t^1(\mathbb R; \dot H_x^{s-2}(\mathbb R))}.
\end{equation}
\end{theorem}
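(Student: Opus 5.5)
We reduce to $j=0$ and then run a Duhamel argument in which the truncation to $(0,t)$ in the $t'$-integral is handled by the boundedness of the Heaviside multiplier on $\dot H^b_t(\R)$ for $|b|<\frac12$. This is the mechanism of \cite[Lemma 11]{h2005}, and it explains the range of $s$: with $b=\frac{2s-2j+1}{4}$, the condition $|b|<\frac12$ is exactly $-\frac32+j<s<\frac12+j$.

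\emph{Step 1: reduction to $j=0$.} As in the proof of Theorem \ref{fivp-t}, differentiating the solution formula gives $\p_x^j S\big[0,0;\mathfrak F\big]=S\big[0,0;\p_x^j\mathfrak F\big]$. Moreover, $\|\p_x^j\mathfrak F(t)\|_{\dot H_x^{s-j-2}(\R)}=\|\mathfrak F(t)\|_{\dot H_x^{s-2}(\R)}$ since the norms are homogeneous. Replacing $s$ by $s-j$ therefore reduces \eqref{fivp-te-low-t} to the case $j=0$ with $-\frac32<s<\frac12$, i.e. $b:=\frac{2s+1}{4}\in(-\frac12,\frac12)$.

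\emph{Step 2: a whole-line homogeneous time estimate.} For fixed $t'$, set $g_{t'}(x,t):=S[0,\mathfrak F(t');0](x,t-t')$. This is the homogeneous solution \eqref{ivp-sol} with $U_0=0$ and $U_1=\mathfrak F(t')$, and it is defined for all $t\in\R$. I would prove
\[
\sup_{x\in\R}\|g_{t'}(x)\|_{\dot H_t^{b}(\R)}\lesssim\|\mathfrak F(t')\|_{\dot H_x^{s-2}(\R)} .
\]
The argument splits $g_{t'}$ into its $e^{\pm i\omega t}$ pieces and $k\gtrless0$, and changes variables $\tau=\pm\omega(k)$ exactly as for $J_0$ in the proof of Theorem \ref{ivp-t}. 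Since the $x$-dependence enters only through the unimodular factor $e^{ikx}$ and the time shift by $t'$ through a unimodular factor, Plancherel in $\tau$ gives a bound uniform in $x$ and $t'$. Switching back to $k$, one must check that
\[
|\tau|^{2b}\,\omega^{-2}\,|\omega'(k)|^{-1}\lesssim|k|^{2(s-2)}\cdot\frac{1}{|\omega'(k)|^{-1}}\quad\text{(after the Jacobian)},
\]
i.e. that the weight $|\omega|^{2b}\omega^{-2}|\omega'|$ is bounded by $|k|^{2s-4}$. For $|k|\gg1$ we have $\omega\sim k^2$ and $\omega'\sim k$, so the weight is $\sim k^{2s+1-4-1}\cdot k=k^{2s-4}$, an exact match. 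For $|k|\ll1$ we have $\omega\sim|k|$ and $\omega'\sim1$, so the weight is $\sim|k|^{s-\frac32}$, which is bounded by $|k|^{2s-4}$ precisely when $s\le\frac52$. This holds in our range.

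\emph{Step 3: Duhamel with the Heaviside cutoff.} From \eqref{W-sol}, for all $t\in\R$,
\[
z(x,t)=\int_0^\infty H(t-t')\,g_{t'}(x,t)\,dt'-\int_{-\infty}^0 H(t'-t)\,g_{t'}(x,t)\,dt',
\]
where $H$ is the Heaviside function. By Minkowski's integral inequality,
\[
\|z(x)\|_{\dot H^b_t(\R)}\le\int_\R\big\|H(\pm(\cdot-t'))\,g_{t'}(x)\big\|_{\dot H^b_t(\R)}\,dt'.
\]
Multiplication by $H(t-t')$, or by $H(t'-t)$, is bounded on $\dot H^b(\R)$ for $|b|<\frac12$, with a constant independent of $t'$ by translation invariance. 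Combining this with Step 2 and taking the supremum over $x$ yields $c_s\|\mathfrak F\|_{L^1_t(\R;\dot H^{s-2}_x(\R))}$.

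\emph{Main obstacle.} I expect the main difficulty to be the multiplier fact: $\chi_{(0,\infty)}$ is a bounded multiplier on $\dot H^b(\R)$ for $-\frac12<b<\frac12$. For $0\le b<\frac12$ this is the classical Hardy-inequality argument applied to the Gagliardo seminorm. The case $-\frac12<b<0$ follows by duality, since multiplication by $\chi_{(0,\infty)}$ is self-adjoint. A secondary point is justifying the Duhamel rewriting, and the interchange with Minkowski's inequality, for $\mathfrak F$ in the homogeneous negative-order space. I would first carry out the argument for Schwartz $\mathfrak F$ with $\mathcal F\{\mathfrak F\}$ vanishing near $k=0$, and then conclude by density.
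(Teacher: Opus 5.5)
Your argument is correct, but it is organized differently from the paper's proof.

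\textbf{The paper's route.} It follows Kenig--Ponce--Vega. It writes the signed indicator of $(0,t)$ as $\frac12[\operatorname{sgn}(t')+\operatorname{sgn}(t-t')]$, which splits $Z_+$ into two pieces:
\begin{itemize}
\item a free-type term $I$, estimated by duality and Cauchy--Schwarz (this uses $s\le\frac12$);
\item a time convolution $J$, whose Fourier transform is a principal-value integral. After $k=\pm\xi(\eta)$ this becomes a Hilbert transform, controlled on $L^2(|\tau|^{s+\frac12}d\tau)$ by the $A_2$ theory (this uses $-\frac32<s<\frac12$).
\end{itemize}

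\textbf{Your route.} You use that the right-hand side is in $L^1_t$. Minkowski in $t'$ then reduces everything to a single time-translated homogeneous solution cut off by a Heaviside function. So you need only two facts: your whole-line homogeneous time estimate (valid for all $s\le\frac52$), and boundedness of $\chi_{(0,\infty)}$ on $\dot H^b(\R)$ for $|b|<\frac12$.

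\textbf{How the two compare.} The two key facts are Fourier-dual. On the Fourier side, multiplication by $\chi_{(0,\infty)}$ is, up to normalization, $\frac12(I-i\mathscr H)$. Hence boundedness of the Heaviside multiplier on $\dot H^b$ is equivalent to boundedness of $\mathscr H$ on $L^2(|\tau|^{2b}d\tau)$. Since $|\tau|^{2b}=|\tau|^{s+\frac12}$ is exactly the paper's $A_2$ weight, both proofs hinge on the same condition $|b|<\frac12$.
\begin{itemize}
\item Your version makes this condition transparent. It also avoids the principal-value bookkeeping, the auxiliary functions $\varphi_\pm$, and the separate estimate of $I$.
\item The paper's decomposition never uses Minkowski in $t'$. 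It therefore adapts to right-hand norms (e.g.\ of $L^1_xL^2_t$ type, as in \cite{kpv1993-nls}) where your first reduction is unavailable.
\end{itemize}

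\textbf{A bookkeeping slip in Step 2.} After the change of variables $\tau=\pm\omega(k)$, the weight in $k$ is $|\omega|^{2b-2}|\omega'|^{-1}$, not $|\omega|^{2b}\omega^{-2}|\omega'|$. With the latter, the large-$k$ exponent would be $2s-2$ and the match would fail. With the correct weight you get $k^{(2s+1)-4-1}=k^{2s-4}$ for $|k|\gg1$ and $|k|^{s-\frac32}$ for $|k|\ll1$, so your conclusions stand. You should also clean up the displayed inequality involving $\frac{1}{|\omega'(k)|^{-1}}$ and the computation ``$k^{2s+1-4-1}\cdot k=k^{2s-4}$'', which is internally inconsistent.
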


\begin{proof}
We first observe that $\p_x^j z(x, t) \equiv \p_x^j S\big[0; \mathfrak F\big] =  S\big[0; \p_x^j \mathfrak F\big]$ thus it suffices to problem \eqref{fivp-te-low-t} for $j=0$. For this task, we employ the technique used in the proof of  \cite[Theorem 2.3]{kpv1993-nls} and \cite[Lemma 11]{h2005}.
In particular, according to the solution formula \eqref{flb-ivp-sol}, 
$Z = Z_+ + Z_-$
where
\eee{\label{Wpm-def}
Z_\pm(x, t) :=  \frac{1}{2\pi} \int_{\R} \frac{e^{ikx\pm i\omega t}}{\pm 2i\omega} \int_0^t e^{\mp i\omega t'} \mathcal F_x\{\mathfrak F\}(k,t')\,dt'  dk. 
}
So, by the triangle inequality, it suffices to establish the estimate for $Z_+$ as the estimation of $Z_-$ is entirely analogous.

We begin by noting that  $Z_+ = I + J$ where
\aaa{
I(x,t)
&:=
\frac{1}{4\pi}\int_{\mathbb R}\textnormal{sgn}(t')
 \int_{\mathbb R} \frac{e^{ikx+i\omega (t-t')}}{2i\omega} \mathcal F_x\{\mathfrak F\}(k,t') dk dt',
   \label{w-I}
\\
J(x, t)
&:= 
\frac{1}{4\pi}\int_{\mathbb R} e^{i\tau t}
\left[\lim_{\varepsilon\rightarrow 0^+} \frac{1}{2\pi} \int_{|\tau-\omega|>\varepsilon} 
e^{ikx} 
\, \frac{\mathcal F\{\mathfrak F\}(k, \tau)}{2i\omega (\tau-\omega)} dk\right] d \tau,
\label{w-J}
}
with $\textnormal{sgn}(\cdot)$ denoting the signum function and  $\mathcal F\{\mathfrak F\}(k,\tau)$ being the Fourier transform of $\mathfrak F(x, t)$ in both $x$ and $t$.  
\\[2mm]
\textbf{Estimation of $I(x,t)$.}
Using duality, we define the  norm of the space $L_x^\infty(\mathbb R; \dot H_t^{\frac{2s+1}{4}}(\mathbb R))$ by
\begin{equation}\label{weak}
\left\|I\right\|_{L_x^\infty(\mathbb R; \dot H_t^{\frac{2s+1}{4}}(\mathbb R))} 
= 
\sup_{\phi} 
\left|
\int_{\mathbb R} \int_{\mathbb R} I(x,t) \phi(x,t) dtdx
\right|
\end{equation}
where $\phi \in C_c^\infty(\mathbb R^2)$ with $\left\|\phi\right\|_{L_x^1(\mathbb R; \dot H_t^{-\frac{2s+1}{4}}(\mathbb R))} = 1$.
By the Cauchy-Schwarz inequality, we have
\begin{align}
\left|\int_{\mathbb R}\int_{\mathbb R}  I(x,t) \phi(x,t) dt dx \right|
&=
\left|
\int_{\mathbb R}
\left(\int_{\mathbb R} \text{sgn}(t')
 e^{-i\omega t'} \frac{\mathcal F_x\{\mathfrak F\}(k,t')}{2i\omega} dt'\right)
\left(\int_{\mathbb R}  e^{i\omega t} \mathcal F_x\{\phi\}(-k, t) dt\right) dk 
\right|
\nn \\
&\leqslant
\left(\int_{\mathbb R}
|k|^{2s}
\left|\int_{\mathbb R} \text{sgn}(t')
 e^{-i\omega t'} \frac{\mathcal F_x\{\mathfrak F\}(k,t')}{2i\omega} dt'\right|^2dk\right)^{\frac 12}
\label{14a}
\\
&\qquad
\cdot \left(
 \int_{\mathbb R}
|k|^{-2s}
\left|\int_{\mathbb R}  e^{i\omega t} \mathcal F_x\{\phi\}(-k, t) dt\right|^2 dk
 \right)^{\frac 12}.\label{14b}
\end{align}
For the term \eqref{14a}, we apply Minkowski's integral inequality and recall the definition of $\omega$ to obtain
\aaa{
\eqref{14a}
&\leqslant
\int_{\mathbb R} 
\left(\int_{\mathbb R}
|k|^{2s}\left|\frac{\mathcal F_x\{\mathfrak F\}(k,t')}{2i\omega}\right|^2dk\right)^{\frac 12}dt' \nn \\
& \lesssim
\int_{\mathbb R} 
\left(\int_{\mathbb R}
|k|^{2(s-2)}\left|\mathcal F_x\{\mathfrak F\}(k,t')\right|^2dk\right)^{\frac 12}dt' 
=
\left\|\mathfrak F\right\|_{L_t^1(\mathbb R; \dot H_x^{s-2}(\mathbb R))}.
\label{14a-est}
}
Regarding the term  \eqref{14b}, we let  
$k = -\xi(\tau)$ for 
$k\leqslant 0$ and $k=\xi(\tau)$ for $k\geqslant 0$, where $\xi$ is given by \eqref{cov}, so that in both cases we have $\tau=-\omega$. Thus, 
\begin{align}
 \hspace{-0.8cm} 
 \eqref{14b}
&=
\Bigg[
 \int_{-\infty}^0
|\xi(\tau)|^{-2s} (-\xi'(\tau))
\left(
\left|\int_{\mathbb R} e^{-i\tau t}\, \mathcal F_x\{\phi\}(\xi(\tau),t) dt\right|^2 
+
\left|\int_{\mathbb R} e^{-i\tau t}\, \mathcal F_x\{\phi\}(-\xi(\tau),t) dt\right|^2 
\right)
d\tau
\Bigg]^{\frac 12}
\nn\\
&\leq
2\left(
\int_{-\infty}^0
|\xi(\tau)|^{-2s} (-\xi'(\tau))
\left(\int_{\R}   \left|\mathcal F_t\{\phi\}(x, \tau)\right| dx \right)^2 d\tau
\right)^{\frac 12}.
\end{align}
Then, using Minkowski's integral inequality and splitting the $\tau$ integral results in
\aaa{
\eqref{14b}
&\leq
2 \int_{\R}
\left(
\int_{-\infty}^{-1}
|\xi(\tau)|^{-2s} (-\xi'(\tau))
 \left|\mathcal F_t\{\phi\}(x, \tau)\right|^2 d\tau
 \right)^{\frac 12}
  dx
 \label{tau1}
 \\
 &\quad
 +
  2 \int_{\R}
\left(
\int_{-1}^0
|\xi(\tau)|^{-2s} (-\xi'(\tau))
 \left|\mathcal F_t\{\phi\}(x, \tau)\right|^2 d\tau
 \right)^{\frac 12}
  dx.
   \label{tau2}
 }
For the first integral, since for $\tau \leq -1$ we have that $\sqrt{-1+\sqrt{1+4\tau^2}} \geq \sqrt{-1+\sqrt 5} > 1$, we can write
$
-\xi'(\tau) =  \sqrt 2|\tau| \left(\sqrt{1+4\tau^2}\sqrt{-1+\sqrt{1+4\tau^2}}\right)^{-1}
\simeq
|\tau|^{-\frac 12}.
$
Therefore, 
\aaa{
\eqref{tau1}
& \lesssim
\int_{\R}
\left(
\int_{-\infty}^{-1}
\left(|\tau|^2\right)^{-\frac{2s+1}{4}}
 \left|\mathcal F_t\{\phi\}(x, \tau)\right|^2 d\tau
 \right)^{\frac 12}
  dx
 \leq
\no{\phi}_{L_x^1(\R; \dot H_t^{-\frac{2s+1}{4}}(\R))} = 1.
\label{14b-est}
}
Concerning \eqref{tau2}, we observe that
$
|\xi(\tau)|^{-2s} (-\xi'(\tau)) 
\simeq
\left(1+\sqrt{1+4\tau^2}\right)^{\frac{2s+1}{2}} |\tau|^{-2s}
\lesssim
|\tau|^{-s-\frac 12}
$
for $\tau \in (-1, 0)$ and $s\leq \frac 12$. Thus, 
\aaa{
\eqref{tau2}
&\lesssim
\int_{\R}
\left(
\int_{-1}^0
|\tau|^{-s-\frac 12}
 \left|\mathcal F_t\{\phi\}(x, \tau)\right|^2 d\tau
 \right)^{\frac 12}
  dx
\leq
\no{\phi}_{L_x^1(\R; \dot H_t^{-\frac{2s+1}{4}}(\R))} = 1.
\label{14c-est}
} 
Overall, combining \eqref{14a-est}, \eqref{14b-est} and \eqref{14c-est} with \eqref{14a}, \eqref{14b} and the definition \eqref{weak}, we conclude that
\begin{equation}
\label{I-est-L1}
\left\|I\right\|_{L_x^\infty(\mathbb R; \dot H_t^{\frac{2s+1}{4}}(\mathbb R))} \lesssim \left\|\mathfrak F\right\|_{L_t^1(\mathbb R; \dot H_x^{s-2}(\mathbb R))},\quad s \leq \frac 12. 
\end{equation}
\noindent
\textbf{Estimation of $J(x,t)$.}
Under the change of variables $k=\xi(\eta)$ for $k\geqslant 0$ and $k = -\xi(\eta)$ for 
$k\leqslant 0$, where $\xi$ is given by \eqref{cov} (note that, in both cases,  $\eta=-\omega \leq 0$), formula \eqref{w-J} becomes
\begin{align}
J(x,t)
&\simeq
\int_{\mathbb R}  e^{i\tau t} 
\left[ -\lim_{\varepsilon \rightarrow 0^+}
\int_{|\tau+\eta|>\varepsilon, \, \eta < 0}
e^{i\xi(\eta)x}\, \frac{\mathcal F\{\mathfrak F\}(\xi(\eta),\tau)}{-\eta(\tau+\eta)}\, \xi'(\eta) d\eta \right] d\tau
\nonumber\\
&\quad
+\int_{\mathbb R}  e^{i\tau t} 
\left[ \lim_{\varepsilon \rightarrow 0^+}
\int_{|\tau+\eta|>\varepsilon, \, \eta < 0}
e^{-i\xi(\eta) x}\, \frac{\mathcal F\{\mathfrak F\}(-\xi(\eta),\tau)}{-\eta(\tau+\eta)}\, (-\xi'(\eta)) d\eta \right] d\tau.
\end{align}
Hence, 
\begin{align}
\left\|J(x)\right\|_{\dot H_t^{\frac{2s+1}{4}}(\mathbb R)}^2
&\lesssim
\int_{\mathbb R}
|\tau|^{s+\frac 12}
\left|
\lim_{\varepsilon \rightarrow 0^+}
\int_{|\tau+\eta|>\varepsilon, \, \eta < 0}
e^{i\xi(\eta)x}   \int_{\mathbb R} e^{-i\tau t} \mathcal F_x\{\mathfrak F\}(\xi(\eta),t) dt \, \frac{ \xi'(\eta)}{\eta(\tau+\eta)} \, d\eta
\right|^2
d\tau
\nonumber\\
&\quad
+
\int_{\mathbb R}
|\tau|^{s+\frac 12}
\left|
\lim_{\varepsilon \rightarrow 0^+}
\int_{|\tau+\eta|>\varepsilon, \, \eta < 0}
e^{-i\xi(\eta) x} \int_{\mathbb R} e^{-i\tau t} \mathcal F_x\{\mathfrak F\}(-\xi(\eta),t) dt \, \frac{\xi'(\eta)}{\eta(\tau+\eta)}\,  d\eta
\right|^2
d\tau
\nn\\
&\leqslant
\int_{\mathbb R}
|\tau|^{s+\frac 12}
\left(
 \int_{\mathbb R}  
\left| 
\lim_{\varepsilon \rightarrow 0^+}
\int_{|\tau-\eta|>\varepsilon, \, \eta < 0}
e^{i\xi(\eta)x}\, \frac{\mathcal F_x\{\mathfrak F\}(\xi(\eta),t)}{\eta(\tau-\eta)}\, \xi'(\eta) d\eta
 \right| dt
\right)^2
d\tau
\nn\\
&\quad
+
\int_{\mathbb R}
|\tau|^{s+\frac 12}
\left(
\int_{\mathbb R} 
\left| 
\lim_{\varepsilon \rightarrow 0^+}
\int_{|\tau-\eta|>\varepsilon, \, \eta < 0}
e^{-i\xi(\eta)x}\, \frac{\mathcal F_x\{\mathfrak F\}(-\xi(\eta),t)}{\eta(\tau-\eta)}\, \xi'(\eta) d\eta
 \right|
  dt
\right)^2
d\tau
\nn\\
&\simeq
\int_{\mathbb R}
|\tau|^{s+\frac 12}
\left(
\int_{\mathbb R} 
\left|
\mathscr H\left\{\varphi_+\right\}(\tau)
\right|
dt
\right)^2
d\tau
+
\int_{\mathbb R}
|\tau|^{s+\frac 12}
\left(
\int_{\mathbb R} 
\left|
\mathscr H\left\{\varphi_-\right\}(\tau)
\right|
dt
\right)^2
d\tau,
\end{align}
where  $\mathscr H\{\cdot\}$ denotes the Hilbert transform and for each  $(x, t) \in \mathbb R^2$ we define
\eee{\label{fpm}
\varphi_\pm (\eta)
=
\left\{
\begin{array}{ll}
e^{i\xi(\eta)x}\, \mathcal F_x\{\mathfrak F\}(\pm \xi(\eta),t)\, \frac{\xi'(\eta)}{\eta}, &\eta < 0, 
\\
0, &\eta>0.
\end{array}
\right.
}

Hence, by Minkowski's integral inequality between the $\tau$ and $t$ integrals, we obtain
\eee{
\left\|J(x)\right\|_{\dot H_t^{\frac{2s+1}{4}}(\mathbb R)}
\lesssim
\int_{\mathbb R} 
\left(
\int_{\mathbb R}
|\tau|^{s+\frac 12}
\left|
\mathscr H\left\{\varphi_+\right\}(\tau)
\right|^2
d\tau
\right)^{\frac 12}
dt
+
\int_{\mathbb R} 
\left(
\int_{\mathbb R}
|\tau|^{s+\frac 12}
\left|
\mathscr H\left\{\varphi_-\right\}(\tau)
\right|^2
d\tau
\right)^{\frac 12}
dt.
\label{J-est-0}
}
According to a classical result from the theory of Calder\' on-Zygmund singular integral operators, the Hilbert transform is bounded from $L^2(\mu)$ to $L^2(\mu)$ when $\mu$ is an $A_2$ weight \cite[Definition 7.1.3 and Theorem 7.4.6]{g2014c}. Using this fact for $\mu = |\tau|^{s+\frac 12}$ with $-\frac 32 < s < \frac 12$, which guarantees that $\mu$ is $A_2$, we obtain  
\eee{
\left\|J(x)\right\|_{\dot H_t^{\frac{2s+1}{4}}(\mathbb R)}
\lesssim
\int_{\mathbb R} \left(\int_{\mathbb R} |\eta|^{s+\frac 12}  \left|\varphi_+(\eta)\right|^2 d\eta \right)^{\frac 12} dt
+
\int_{\mathbb R} \left(\int_{\mathbb R} |\eta|^{s+\frac 12}  \left|\varphi_-(\eta)\right|^2 d\eta \right)^{\frac 12} dt.
}
Substituting for $\varphi_\pm$ via \eqref{fpm} and switching variables back to $k$ by letting $k = \xi(\eta)$ and $k = -\xi(\eta)$, respectively, we find, after also taking into account that $s - \frac 32 \leq 0$, 
\begin{align}
\left\|J(x)\right\|_{\dot H_t^{\frac{2s+1}{4}}(\mathbb R)}
&\lesssim
\int_{\mathbb R} \left(\int_0^\infty \left|k\left(1+k^2\right)^{\frac 12}\right|^{s-\frac 32}  \left| \mathcal F_x\{\mathfrak F\}(k,t)\right|^2  \frac{\left(1+k^2\right)^{\frac12}}{(1+2k^2)} dk \right)^{\frac 12} dt
\nn\\
&\quad
+
\int_{\mathbb R} \left(\int_{-\infty}^0 \left|k\left(1+k^2\right)^{\frac 12}\right|^{s-\frac 32}  \left| \mathcal F_x\{\mathfrak F\}(k,t)\right|^2  \frac{\left(1+k^2\right)^{\frac12}}{(1+2k^2)}  dk \right)^{\frac 12} dt
\nn\\
&\leq
\int_{\mathbb R} \left(\int_0^\infty |k^2|^{s-\frac 32}  \left| \mathcal F_x\{\mathfrak F\}(k,t)\right|^2  \frac{1}{|k|} dk \right)^{\frac 12} dt 
+ \int_{\mathbb R} \left(\int_{-\infty}^0 |k^2|^{s-\frac 32}  \left| \mathcal F_x\{\mathfrak F\}(k,t)\right|^2  \frac{1}{|k|}  dk \right)^{\frac 12} dt 
\nn\\
&\leq
2\left\|\mathfrak F\right\|^2_{L_t^1(\mathbb R; \dot H_x^{s-2}(\mathbb R))},
\quad -\frac 32 < s < \frac 12, \ x\in \mathbb R.
\label{J-est-L1}
\end{align}

Combining estimate \eqref{I-est-L1} for $I$ and estimate \eqref{J-est-L1} for $J$  with the writing $Z_+ = I + J$, we deduce the desired time estimate \eqref{fivp-te-low-t} for $Z_+$. The term $Z_-$ can be estimated similarly, thus the proof of Theorem \ref{hom-te-t} is complete.
\end{proof}

The estimates \eqref{fivp-te-t} and \eqref{fivp-te-low-t} imply that the quantity $(\p_x +\gamma I)Z(x, t)$   belongs to the space $H_t^{\frac{2s-1}4}(0,T)$. Specifically,  if $s \leq \frac 12$, then 
\eee{
\no{(\p_x +\gamma I)Z(x)}_{H_t^{\frac{2s-1}4}(0,T)}
\leq
\no{\p_x Z(x)}_{\dot H_t^{\frac{2s-1}4}(0,T)}
+
|\gamma| \no{Z(x)}_{H_t^{\frac{2s+1}4}(0,T)}
}
so, using \eqref{fivp-te-low-t} with $j=1$ and $\mathfrak F(x, t) = \chi_{[0, T]}(t) F(x, t)$ for the norm of $\p_x Z$ and also \eqref{fivp-te-t} with $j=0$ for the norm of $Z$, we find
\eee{\label{rte1}
\no{(\p_x +\gamma I)Z(x)}_{H_t^{\frac{2s-1}4}(0,T)}
\leq
c_s 
\no{F}_{L_t^1((0, T); \dot H_x^{s-2}(\R))},
\quad
-\frac 12 < s \leq \frac 12, \ x\in \R.
}
If $\frac 12 \leq s \leq  \frac 52$, then we only need \eqref{fivp-te-t}, once with  $j=1$ (for $\p_x Z$) and once with $j=0$ and $s-1$ in place of $s$ (for $Z$). We then obtain
\eee{\label{rte2}
\no{(\p_x +\gamma I)Z(x)}_{H_t^{\frac{2s-1}4}(0,T)}
\leq
c(s, T)
\no{F}_{L_t^1((0, T); H_x^{s-2}(\R))},
\quad
\frac 12 \leq s \leq \frac 52, \ x\in \R.
}
Finally, if $\frac 52 \leq s \leq  \frac 92$, then employing \eqref{fivp-te-t} as for the previous range of $s$ yields
\eee{\label{rte3}
\no{(\p_x +\gamma I)Z(x)}_{H_t^{\frac{2s-1}4}(0,T)}
\leq
c(s, T)
\no{F}_{L_t^2((0, T); H_x^{s-2}(\R))},
\quad
\frac 52 \leq s \leq \frac 92,\ x\in \R.
}
Similarly,  for each $x \in \R$, the term $(\p_x^2 +\delta \p_x )Z(x, t)$ belongs to the space $H_t^{\frac{2s-3}4}(0,T)$ with the estimate
\eee{\label{fivp-te5}
\no{(\p_x^2 +\delta \p_x )Z(x)}_{H_t^{\frac{2s-3}4}(0,T)}
 \leq
 c(s, T)
\begin{cases}
\no{F}_{L_t^1((0, T); \dot H_x^{s-2}(\R))}, & \frac12 < s \leq \frac32, \\
\no{F}_{L_t^1((0, T); H_x^{s-2}(\R))}, & \frac32 \leq s \leq \frac72, \\
\no{F}_{L_t^2((0, T); H_x^{s-2}(\R))}, & \frac72 \leq s \leq \frac{11}2,
\end{cases}
\quad
x\in\R.
}

\section{Linear and nonlinear theory for $s>\frac 12$}
\label{l-nl-high-s}

In this section, we establish the central linear estimate \eqref{lin-est} of Theorem \ref{linear-t} and then combine it with a contraction mapping argument in order to prove Theorem \ref{lwp-t} for the local well-posedness of the nonlinear problem~\eqref{gbous} in the case of high regularity.

\subsection{Proof of Theorem \ref{linear-t}}

The results of Section \ref{cauchy-s} allow us to perform a decomposition of the forced linear problem \eqref{lgbous} into simpler component problems that will be estimated individually.
For any $s\in \R$, let $U_0, U_1$ be extensions of the initial data $u_0, u_1$ from $H^s(0, \infty)$ to $H^s(\R)$ and from $H^{s-2}(0, \infty)$ to $H^{s-2}(\R)$, respectively, such that  
\eee{\label{ic-ext}
\no{U_0}_{H^s(\R)} \leq 2 \no{u_0}_{H^s(0,\infty)},
\quad
\no{U_1}_{H^{s-2}(\R)} \leq 2 \no{u_1}_{H^{s-2}(0,\infty)}.
}
The existence of such extensions is justified by viewing $H^s(0, \infty)$ as the restriction of $H^s(\R)$ on the half-line through the infimum norm
\eee{
\no{u}_{H^s(0, \infty)} := \inf\left\{\no{U}_{H^s(\R)}: U\big|_{(0, \infty)} = u\right\}.
}
Similarly, for each $t\in (0, T)$, let $W(t)\in H_x^s(\R)$ be an extension of $w(t)\in H_x^s(0, \infty)$ such that 
\eee{\label{f-ext}
\no{W(t)}_{H_x^s(\R)} \leq 2 \no{w(t)}_{H_x^s(0, \infty)}, \quad t\in (0, T). 
}
Importantly, note that for each $t\in (0, T)$ we have $\p_x^2 W\big|_{(0, \infty)} = \p_x^2 w$ (in the weak sense).  Then, let $W_0(t)$ denote the zero extension of $W(t)$ from $(0, T)$ to $(0, T+1)$, so that 
\eee{\label{f-ext2}
\no{W_0}_{L_t^p((0, T+1); H_x^s(\R))} = \no{W}_{L_t^p((0, T); H_x^s(\R))}
\leq
2 \no{w}_{L_t^p((0, T); H_x^s(0, \infty))}, \quad 1 \leq p \leq \infty, \ s\in\R, \ T>0.
}

Consider now the homogeneous Cauchy problem \eqref{ivp} on $\R \times (0, T)$ with initial data given by the extensions $U_0, U_1$ specified above and corresponding solution denoted by $U = S\big[U_0, U_1; 0\big]$. 
Furthermore, consider the forced Cauchy problem \eqref{flb-ivp} on $\R \times (0, T+1)$ (i.e. with $T$ in \eqref{flb-ivp} replaced by $T+1$) in the case of the forcing $F=-\p_x^2 W_0$ and with corresponding solution denoted by $Z = S\big[0, 0; -\p_x^2 W_0\big]$. Then, the solution of problem~\eqref{lgbous} can be written as
\eee{\label{dec}
S\big[u_0, u_1, \alpha, \beta; -\p_x^2 w\big]
=
S\big[U_0, U_1; 0\big]\big|_{x>0}
+
S\big[0, 0; -\p_x^2 W_0\big]\big|_{x>0, \, t\in (0, T)}
+
S\big[0, 0, \varphi_0, \psi_0; 0\big]
}
with the third component on the right side satisfying the following half-line problem with zero initial data and zero forcing
\ddd{\label{ribvp-0}
&u_{tt} - u_{xx} + u_{xxxx} = 0, \quad  x \in (0, \infty), \ t\in (0, T), 
\\
&u(x,0) = 0, 
\quad
u_t(x,0) = 0, 
\\
&u_x(0,t)+\gamma u(0,t) =   \varphi_0(t), 
\quad
u_{xx}(0,t)+\delta u_{x}(0,t) =   \psi_0(t),
}
where, for $U = S\big[U_0, U_1; 0\big]$ and $Z = S\big[0, 0; -\p_x^2 W_0\big]$ as above, we define
\ddd{
\varphi_0(t) &:= \alpha(t) - \left(\p_x + \gamma I\right)U(0, t) - \left(\p_x + \gamma I\right)Z(0, t)\big|_{t\in (0, T)}, 
\\
 \psi_0(t) &:= \beta(t) - \left(\p_x^2 + \delta \p_x\right)U(0, t) - \left(\p_x^2 + \delta \p_x\right)Z(0, t)\big|_{t\in (0, T)}.
}

The problem \eqref{ribvp-0} is almost the reduced problem \eqref{lgbous-r}, the only difference being the compact support condition present in \eqref{lgbous-r}. In particular, the regularity of the boundary data $\varphi_0$ and $\psi_0$ is the same with that of the data in~\eqref{lgbous-r}. Indeed, using the homogeneous Cauchy time estimate \eqref{ivp-te-t} and the forced Cauchy time estimates \eqref{rte1}-\eqref{fivp-te5} but with $T$ replaced by $T+1$, along with the fact that $\no{\p_x^2 W_0}_{\dot H_x^{s-2}(\R)} = \no{W_0}_{\dot H_x^{s}(\R)}$  and $\no{\p_x^2 W_0}_{H_x^{s-2}(\R)} \leq \no{W_0}_{H_x^s(\R)}$, we have
\aaa{
\no{\varphi_0}_{H^{\frac{2s-1}{4}}(0, T)}
&\leq
\no{\alpha}_{H^{\frac{2s-1}{4}}(0, T)} 
+
c(s, T) \left(\no{U_0}_{H^s(\R)} + \no{U_1}_{H^{s-2}(\R)}\right)
\nn\\
&\quad
+
c(s, T+1) \begin{cases} \no{W_0}_{L_t^1((0, T+1); \dot H_x^{s}(\R))}, &-\frac 12<s<\frac 12
\\
\no{W_0}_{L_t^1((0, T+1); H_x^{s}(\R))}, &\frac 12\leq s\leq\frac 52
\\
\no{W_0}_{L_t^2((0, T+1); H_x^{s}(\R))}, &\frac 52\leq s\leq\frac 92
\end{cases}.
}
Hence, by the extension inequalities \eqref{ic-ext} and \eqref{f-ext2}, 
\aaa{
\no{\varphi_0}_{H^{\frac{2s-1}{4}}(0, T)}
&\leq 
\no{\alpha}_{H^{\frac{2s-1}{4}}(0, T)} 
+
c(s, T) \left(\no{u_0}_{H^s(0, \infty)} + \no{u_1}_{H^{s-2}(0, \infty)}\right)
\nn\\
&\quad
+
c(s, T+1) \begin{cases} \no{w}_{L_t^1((0, T); \dot H_x^{s}(0, \infty))}, &-\frac 12<s<\frac 12
\\
\no{w}_{L_t^1((0, T); H_x^{s}(0, \infty))}, &\frac 12\leq s\leq\frac 52
\\
\no{w}_{L_t^2((0, T); H_x^{s}(0, \infty))}, &\frac 52\leq s\leq\frac 92
\end{cases},
\label{phi0-est}
}
which shows that $\varphi_0 \in H^{\frac{2s-1}{4}}(0, T)$. Similarly, $\psi_0 \in H^{\frac{2s-3}{4}}(0, T)$ with
\aaa{
\no{\psi_0}_{H^{\frac{2s-3}{4}}(0, T)}
&\leq
\no{\beta}_{H^{\frac{2s-3}{4}}(0, T)} 
+
c(s, T) \left(\no{u_0}_{H^s(0, \infty)} + \no{u_1}_{H^{s-2}(0, \infty)}\right)
\nn\\
&\quad
+
c(s, T+1)
\begin{cases}
\no{w}_{L_t^1((0,T);\dot H_x^{s}(0,\infty))}, & \frac12 < s< \frac32 \\
\no{w}_{L_t^1((0,T);H_x^{s}(0,\infty))}, & \frac32 \leq s \leq \frac72 \\
\no{w}_{L_t^2((0,T);H_x^{s}(0,\infty))}, & \frac72 \leq s \leq \frac{11}2
\end{cases}.
\label{psi0-est}
}
Importantly, note that the constant $c(s, T+1)$ in the above estimates remains bounded as $T\to 0^+$ due to the fact that it originates from the time estimate \eqref{fivp-te-t} but with $T+1$ in place ot $T$, in which case the relevant interval $(0, T+1)$ reduces to the non-degenerate interval $(0, 1)$ as $T \to 0^+$.

Next, we construct suitable extensions of $\varphi_0, \psi_0$ that have compact support (this will allow us to employ Theorem~\ref{r-t} for the third component in \eqref{dec}). 
First, by the infimum approximation property, let $\Phi \in H^{\frac{2s-1}{4}}(\R)$ be an extension of $\varphi_0$ such that
\eee{\label{phi-ineq}
\no{\Phi}_{H^{\frac{2s-1}{4}}(\R)}
\leq
2 \no{\varphi_0}_{H^{\frac{2s-1}{4}}(0, T)},
\quad
s\in\R.
}
If  $\frac 12 \leq s < \frac 32$, then by \cite[Theorem 11.4]{lm1972} the function $\chi_{[0, T+1]} \Phi$ (i.e. the zero extension of $\Phi|_{(0, T+1)}$) belongs to the Sobolev space $H^{\frac{2s-1}{4}}(\R)$ with the inequality
\eee{\label{zero-ext}
\no{\chi_{[0, T+1]} \Phi}_{H^{\frac{2s-1}{4}}(\R)}
\leq
c_{s, T+1} \no{\Phi}_{H^{\frac{2s-1}{4}}(0, T+1)}
\leq
2 c_{s, T+1} \no{\varphi_0}_{H^{\frac{2s-1}{4}}(0, T)}
}
with the last inequality due to \eqref{phi-ineq}. Note that in the limit $T\to 0^+$ the interval $(0, T+1)$ tends to $(0, 1)$, which is a non-degenerate interval and hence our constant $c_{s, T+1}$ remains finite as $T\to 0^+$.

If $\frac 32 < s \leq \frac 92$, then more care is required due to the existence of Sobolev traces of $\varphi_0$ at $t=0, T$. Specifically, thanks to our space and time estimates \eqref{ivp-se-t}, \eqref{ivp-te-t}, \eqref{fivp-se-t}, \eqref{fivp-te-t}, for $s> \frac 32$ we have continuity of $U, U_x, Z, Z_x$ in both $x$ and $t$. Thus, since  $U_0$ is an extension of $u_0$ and $U_0, U_0'$ are continuous functions of $x$,  we deduce
\eee{\label{phi0}
\varphi_0(0) =  \alpha(0) -  \left(\p_x + \gamma I\right)U(x, 0)\big|_{x=0} - \left(\p_x + \gamma I\right)Z(x, 0)\big|_{x=0}
=
\alpha(0) -  \left(u_0'(0) + \gamma u_0(0)\right) = 0, \quad s>\frac 32,
}
after using the first compatibility condition in \eqref{comp} for the last step. Similarly,
\eee{
\varphi_0'(0) 
= 
 \alpha'(0) -  \left(\p_x + \gamma I\right)U_t(x, 0)\big|_{x=0} - \left(\p_x + \gamma I\right)Z_t(x, 0)\big|_{x=0}
=
\alpha'(0) -  \left(u_1'(0) + \gamma u_1(0)\right)= 0, \quad s>\frac 72.
\label{phi0p}
}
Let $\theta \in C_c^\infty(\R)$ such that $0 \leq \theta(t) \leq 1$ for all $t\in \R$, $\theta(t) \equiv 1$ on $[- T, T]$ and $\theta(t) \equiv 0$ on $[-T-\frac 12, T+\frac 12]^c$. Then, since $\frac{2s-1}{4}>\frac 12$, by the algebra property the globally defined function
$\Phi_\theta(t) := \theta(t) \Phi(t)$ (with $\Phi$ as above)
satisfies 
\eee{\label{noname}
\no{\Phi_\theta}_{H^{\frac{2s-1}{4}}(\R)}
\leq
c_s \no{\theta}_{H^{\frac{2s-1}{4}}(\R)} \no{\Phi}_{H^{\frac{2s-1}{4}}(\R)}
\leq
c(s, T) \no{\varphi_0}_{H^{\frac{2s-1}{4}}(0, T)}
}
where the constant $c(s, T)$ remains finite at $T\to 0^+$ due to the fact that $\theta$ is smooth and its support becomes  $[-\frac 12, \frac 12]$ in that limit. In particular, \eqref{noname} shows that $\Phi_\theta|_{(0, T+1)} \in H^{\frac{2s-1}{4}}(0, T+1)$. 
In addition, by continuity in $t$ (since $s>\frac 32$) and the condition \eqref{phi0}, we have $\Phi_\theta(0) = \varphi_0(0) = 0$ and, moreover, $\Phi_\theta(T+1) = 0$ since $\theta(T+1) = 0$. 
Hence, for $\frac 32 < s \leq \frac 72$, Theorem 11.5 of \cite{lm1972} implies that $\Phi_\theta|_{(0, T+1)} \in H_0^{\frac{2s-1}{4}}(0, T+1)$. In turn, for $\frac 32 < s < \frac 72$, Theorem 11.4 of \cite{lm1972} allows us to conclude that $\chi_{[0, T+1]} \Phi_\theta \in H^{\frac{2s-1}{4}}(\R)$ with
\eee{\label{r0r}
\no{\chi_{[0, T+1]} \Phi_\theta}_{H^{\frac{2s-1}{4}}(\R)}
\leq
c_{s, T+1}  \no{\Phi_\theta}_{H^{\frac{2s-1}{4}}(0, T+1)}
\leq
c_{s, T+1} \, c(s, T)  \no{\varphi_0}_{H^{\frac{2s-1}{4}}(0, T)}
}
with the last inequality thanks to \eqref{noname}.

For the remaining range of $\frac 72 < s \leq \frac{9}{2}$ in \eqref{phi0-est}, using \eqref{phi0}, \eqref{phi0p}, the definition of $\theta$ and the fact that the derivative in $t$ is now continuous, we have
$\Phi_\theta'(0) = \theta'(0) \Phi(0) + \theta(0) \Phi'(0)
= 0
$
and
$\Phi_\theta'(T+1) = 0$.
These additional vanishing traces allow us to employ once again Theorem 11.5 of \cite{lm1972} to infer that $\Phi_\theta|_{(0, T+1)} \in H_0^{\frac{2s-1}{4}}(0, T+1)$ and then Theorem 11.4 in \cite{lm1972} to deduce the inequality \eqref{r0r} also for $\frac 72 < s \leq \frac{9}{2}$. 
Overall, the function
\eee{
\rho_0(t) := \left\{\begin{array}{ll} \chi_{[0, T+1]}(t) \, \Phi(t), &\frac 12 \leq s < \frac 32, \\[1mm]  \chi_{[0, T+1]}(t) \, \Phi_\theta(t), &\frac 32 < s \leq \frac 92, \ s \neq \frac 72, \end{array}\right.
}
is an extension of $\varphi_0$ from $(0, T)$ to $\R$ such that $\text{supp}(\rho_0) \subset (0, T+1)$ and
\eee{\label{r0r2}
\no{\rho_0}_{H^{\frac{2s-1}{4}}(\R)}
\leq
c_{s, T}  \no{\varphi_0}_{H^{\frac{2s-1}{4}}(0, T)}
}
where $c_{s, T}$ is a constant that remains bounded as $T\to 0^+$. 

For $\frac 32<s\leq \frac 92$ with $s\neq \frac 52$, the above argument for $\varphi_0$ can be readily adapted to also construct the desired extension $\zeta_0 \in H^{\frac{2s-3}{4}}(\R)$ for $\psi_0 \in H^{\frac{2s-3}{4}}(0, T)$ such that $\text{supp}(\zeta_0) \subset (0, T+1)$ and 
\eee{\label{z0z}
\no{\zeta_0}_{H^{\frac{2s-3}{4}}(\R)} \leq c_{s, T} \no{\psi_0}_{H^{\frac{2s-3}{4}}(0, T)}.
} 
In fact, \eqref{z0z} is also valid for $\frac 12 < s < \frac 32$ (which corresponds to $-\frac 12 < \frac{2s-3}{4} < 0$) thanks to the following lemma, which follows from \cite[Theorem~11.4]{lm1972} and a duality argument.
\begin{lemma}[\cite{mmj2026}]
\label{Chris-l}
Let $-\frac 12 < s <0$ and $-\infty \leq a < b \leq \infty$. If $f \in H^s(a, b)$, then the zero extension $F_0$ of $f$ belongs to $H^s(\R)$ and satisfies the estimate
$\no{F_0}_{H^s (\R)} \leq c_{s, a, b} \no{f}_{H^s (a, b)}$.
\end{lemma}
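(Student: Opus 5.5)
The plan is to pass to the dual side, using \cite[Theorem 11.4]{lm1972} together with the fact that for $0<-s<\frac12$ the spaces $H^{-s}(a,b)$ and $H_0^{-s}(a,b)$ coincide with equivalent norms. Set $\sigma:=-s\in(0,\frac12)$. Then $H^s(a,b)$ is identified, with equivalent norms, with the dual of $H_0^{\sigma}(a,b)$; for $\sigma<\frac12$ the latter equals $H^{\sigma}(a,b)$, since no traces exist in that range (this is \cite[Theorem 11.1]{lm1972}). Given $f\in H^s(a,b)$, I would define the zero extension $F_0$ as a tempered distribution on $\R$ by
\[
\langle F_0,\phi\rangle := \langle f,\phi|_{(a,b)}\rangle, \qquad \phi\in\mathcal S(\R).
\]
This makes sense because the restriction $\phi|_{(a,b)}$ lies in $H^{\sigma}(a,b)=H_0^{\sigma}(a,b)$, and $\|\phi|_{(a,b)}\|_{H^\sigma(a,b)}\le\|\phi\|_{H^\sigma(\R)}$ by the infimum definition of the norm. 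Hence
\[
|\langle F_0,\phi\rangle|\le c_{\sigma,a,b}\,\|f\|_{H^s(a,b)}\|\phi\|_{H^\sigma(\R)}.
\]
Since $H^s(\R)$ is the dual of $H^{\sigma}(\R)$ via the $L^2$ pairing, this yields $F_0\in H^s(\R)$ with the claimed bound.

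Two points then need to be checked. First, $F_0$ restricted to $(a,b)$ must equal $f$; this is immediate for test functions supported in $(a,b)$. Second, $F_0$ must genuinely be the ``zero extension'', meaning it is supported in $[a,b]$ and agrees with the classical zero extension when $f\in L^2$. For $\phi$ supported outside $[a,b]$ the pairing vanishes, and when $f\in L^2(a,b)$ the definition reduces to $\int_a^b f\bar\phi$.

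The main obstacle is justifying the duality identification $H^s(a,b)\cong (H_0^{\sigma}(a,b))^*$ with the restriction/infimum norm used in this paper, for an arbitrary (possibly unbounded) interval, and doing so with constants depending only on $s,a,b$. I would argue it as follows. An element $f=F|_{(a,b)}$ with $F\in H^s(\R)$ acts on $g\in H_0^\sigma(a,b)$ through $\langle F,\tilde g\rangle$, where $\tilde g$ is the zero extension of $g$; this extension is bounded on $H^\sigma$ by \cite[Theorem 11.4]{lm1972} precisely because $\sigma<\frac12$. The action does not depend on the choice of $F$, because $C_c^\infty(a,b)$ is dense in $H_0^\sigma(a,b)$. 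Taking the infimum over $F$ gives one inequality. The reverse inequality follows from Hahn--Banach, which extends a functional on the closed subspace $\{\tilde g\}\subset H^\sigma(\R)$ to all of $H^\sigma(\R)$. The constant in \cite[Theorem 11.4]{lm1972} is where the dependence on $a,b$ enters, and by translation and dilation it is uniform for intervals of comparable length. This matches the use of the lemma in the paper, where the intervals are $(0,T)$ or $(0,T+1)$.
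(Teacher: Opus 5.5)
Your proposal is correct and is essentially the argument the paper has in mind when it says the lemma follows from \cite[Theorem~11.4]{lm1972} and a duality argument. Concretely, multiplication by $\chi_{(a,b)}$ (that is, zero extension after restriction) is bounded on $H^{-s}(\R)$ for $0<-s<\frac12$, this is transferred to $H^{s}$ by pairing, and $F_0=\chi_{(a,b)}F$ for any extension $F$ of $f$.

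Two small remarks. First, only the bound $|\langle f,g\rangle|\lesssim\|f\|_{H^s(a,b)}\|g\|_{H^{-s}(a,b)}$ is actually used, so your Hahn--Banach step is unnecessary. Second, writing $\chi_{(a,b)}=\chi_{(a,\infty)}\chi_{(-\infty,b)}$, where each factor is bounded on $H^{-s}(\R)$ with a constant invariant under translation and reflection, shows that $c_{s,a,b}$ can be taken independent of $a,b$. This is stronger than your comparable-length remark, and it is what the paper's later use on $(0,T)$ with $T\to0^+$ actually requires.
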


With $\rho_0$ and $\zeta_0$ defined as above, we observe that problem \eqref{ribvp-0} can be embedded in the problem 
\ddd{\label{ribvp-01}
&u_{tt} - u_{xx} + u_{xxxx} = 0, \quad  x \in (0, \infty), \ t\in (0, T+1), 
\\
&u(x,0) = 0, 
\quad
u_t(x,0) = 0, 
\\
&u_x(0,t)+\gamma u(0,t) = \rho_0(t), \quad u_{xx}(0,t)+\delta u_{x}(0,t) =  \zeta_0(t)
}
Since $\text{supp}(\rho_0), \, \text{supp}(\zeta_0) \subset (0, T+1)$, we can estimate \eqref{ribvp-01} directly via Theorem \ref{r-t}  with $T$ replaced by $T+1$ and $(\varphi, \psi) = (\rho_0, \zeta_0)$ to infer
\eee{\label{r-se2}
\no{S\big[0, 0, \rho_0, \zeta_0; 0\big]}_{X_T^s}
 \leq  \left[c(s)+c(s, T+1) \sqrt{T+2} \big(1-e^{-(T+2)}\big)^{-1} \right] \left(\no{\rho_0}_{H^{\frac{2s-1}{4}}(\R)} + \no{\zeta_0}_{H^{\frac{2s-3}{4}}(\R)}\right).
}
Finally, using our extension inequalities \eqref{zero-ext}, \eqref{r0r2} and \eqref{z0z} together with the fact that $S\big[0, 0, \rho_0, \zeta_0; 0\big]\big|_{(0, T)} = S\big[0, 0, \varphi_0, \psi_0; 0\big]$, we conclude that the solution to problem \eqref{ribvp-0} satisfies
\ddd{
\no{S\big[0, 0, \varphi_0, \psi_0; 0\big]}_{X_T^s}
&\leq
c_{s, T} \left[c(s)+c(s, T+1) \sqrt{T+2} \big(1-e^{-(T+2)}\big)^{-1} \right] 
\\
&\quad
\cdot 
\left(\no{\varphi_0}_{H^{\frac{2s-1}{4}}(0, T)} + \no{\psi_0}_{H^{\frac{2s-3}{4}}(0, T)}\right),
\quad
\frac 12 < s\leq \frac 92, \ s\neq \frac 32, \frac 72. \label{r-se3}
}

Overall, for $\frac 12 < s\leq \frac 92$ with $s\neq \frac 32, \frac 72$, the decomposition \eqref{dec}, the estimates \eqref{ivp-se-t}, \eqref{fivp-se-t}, \eqref{r-se3}, \eqref{phi0-est}, \eqref{psi0-est}, and the extension inequalities~\eqref{ic-ext},~\eqref{f-ext} imply the desired estimate \eqref{lin-est} for the forced linear ``good'' Boussinesq problem \eqref{lgbous}, completing the proof of Theorem~\ref{linear-t}.

\subsection{Proof of Theorem \ref{lwp-t}}\label{lwp-s}

\noindent
\textit{Existence.} Recalling the definition \eqref{xst} of  $X_T^s$ and introducing the notation $S(u) := S\big[u_0, u_1, \alpha, \beta; -\p_x^2(u^2)\big]$, we combine estimate \eqref{lin-est} for $w=u^2$ with the algebra property in $H^s$ to infer
\aaa{\label{nonlin-est}
\no{S(u)}_{X_T^s}
&\leq
c(s, T) 
\Big(
\no{u_0}_{H^{s}(0, \infty)}
+
\no{u_1}_{H^{s-2}(0, \infty)}
+
\no{\alpha}_{H^{\frac{2s-1}{4}}(0, T)} 
+
\no{\beta}_{H^{\frac{2s-3}{4}}(0, T)} 
\nn\\
&\hspace*{1.8cm}
+
\sqrt T \no{u}_{X_T^s}^2\Big),
\quad
\frac12 < s < \frac92, \ s\neq \frac 32, \frac 52, \frac72.
}
Suppose $B(0,r) \subseteq X_T^s$ is a ball centered at $0$ with radius 
\eee{
r:= 2  c(s,T) \Big( \no{u_0}_{H^{s}(0, \infty)}
+
\no{u_1}_{H^{s-2}(0, \infty)}
+
\no{\alpha}_{H^{\frac{2s-1}{4}}(0, T)} 
+
\no{\beta}_{H^{\frac{2s-3}{4}}(0, T)}  \Big).
} 
Then, if $u \in \overline{B(0,r)}$, estimate \eqref{nonlin-est} implies
\eee{
	\no{S(u)}_{X_T^s} \leq \frac r2 +  c(s,T) \sqrt T  \, r^2,  
}
hence a sufficient condition for $S(u) \in \overline{B(0,r)}$ is $2r c(s,T) \sqrt T \leq 1$.  Further, for any $u, v \in \overline{B(0,r)}$, by \eqref{lin-est} and the algebra property in $H^s$, we have  
\eee{
\no{S(u) - S(v)}_{X_T^s}
\equiv
\no{S[0, 0, 0, 0; -\p_x^2(u^2-v^2)]}_{X_T^s} 
\leq  c(s,T) \sqrt T \no{u^2-v^2}_{X_T^s} 
\leq 
2r c(s,T)\sqrt T  \no{u-v}_{X_T^s}, \label{fp-est}
}
so the condition $2r c(s,T)\sqrt T < 1$, which corresponds to \eqref{contr-cond} with $c(s, T)$ replaced by $2c(s, T)$, guarantees that the map $u \mapsto S(u)$ is a contraction on $\overline{B(0, r)}$.
Then, by Banach's fixed point theorem, for $T$ satisfying \eqref{contr-cond} we have a unique fixed point of $u \mapsto S(u)$ in $\overline{B(0, r)} \subset X_T^s$, which amounts to a unique solution (in the sense of the integral equation $u=S(u)$) to the nonlinear problem \eqref{gbous} in $\overline{B(0, r)} \subset X_T^s$.
\\[2mm]
\noindent
\textit{Extending uniqueness to the whole of $X_T^s$.} 
We adapt the approach given in \cite[Proposition 4.2]{cw1990} to the framework of initial-boundary value problems for equations with two temporal derivatives.
Let $w=S(w)$ and $v=S(v)$ be two solutions to problem \eqref{gbous}  such that $w,v \in X_T^s$. Suppose, to the contrary, that there exists $t\in[0,T]$ at which $w(t) \neq v(t)$, and let 
\eee{\label{t-inf}
	t_{\inf} := \inf A \geq 0, \quad A := \{ t \in [0,T] : \text{ either } w(t) \neq v(t) \text{ or } w_t(t) \neq v_t(t) \}.
}

First, suppose $t_{\inf}>0$. Take $0 \leq t_n < t_{\inf}$ such that $t_n \to t_{\inf}^-$ as $n \to \infty$. Then,  $w(t_n)=v(t_n)$ \textit{and} $w_t(t_n)=v_t(t_n)$ for all $t_n$. Also, since $w,v \in X_T^s$,  $w, v$ are continuously differentiable on $[0,T]$. Thus, 
\eee{
w(t_{\inf}) = v(t_{\inf})=: \sigma_0 \in H^s(0,\infty),
\quad
w_t(t_{\inf}) = v_t(t_{\inf})=: \sigma_1 \in H^{s-2}(0,\infty).
}
which shows that $t_{\inf} \notin A$ and $t_{\inf}<T$ (since $A \neq \emptyset$).

Now, set $z_1(t) = w(t+t_{\inf})$ and  $z_2(t) = v(t+t_{\inf})$ and note that $z_1$ and $z_2$ are both solutions (as fixed points of the relevant iteration map) to the ``good'' Boussinesq equation on $(0, \infty) \times (0, T-t_{\inf})$ that satisfy the same initial and boundary conditions:
\ddd{\label{shifted-ibc}
& z_1(0)=z_2(0)= \sigma_0, \qquad \p_t z_1 (0) = \p_t z_2 (0)= \sigma_1, \\
& (\p_x+\gamma I) z_1 \big\vert_{x=0} = \alpha (\cdot + t_{\inf}) =: \alpha_{\inf}, 
\qquad
(\p_x^2+\delta \p_x) z_2 \big\vert_{x=0} = \beta (\cdot + t_{\inf}) =: \beta_{\inf}.
}
If $t_{\inf} =0$, then we set $z_1=w$ and $z_2=v$ and modify the above boundary conditions accordingly. 
Thus, from the contraction inequality \eqref{fp-est}, for all $T_* \in (0, T-t_{\inf}]$ we have
\eee{
	\no{z_1-z_2}_{X_{T_*}^s} 
	\leq
	 c_{\inf}(s,T_*) \sqrt{T_*} 
	 \left( \no {z_1}_{X_{T_*}^s}+\no {z_1}_{X_{T_*}^s}\right)
	 \no{z_1-z_2}_{X_{T_*}^s},
}
where $c_{\inf}$ remains bounded as $T_* \to 0^+$. Therefore, choosing $T_*>0$ sufficiently small, we have
\eee{
	\no{z_1-z_2}_{X_{T_*}^s}  \leq \frac12 \no{z_1-z_2}_{X_{T_*}^s} \ \Rightarrow \  \no{z_1-z_2}_{X_{T_*}^s} =0.
}
By the definition \eqref{xst} of $X_{T^*}^s$, this implies $w=v$ \textit{and} $w_t = v_t$ for all $t \in [t_{\inf}, t_{\inf}+T_*]$ and so $t_{\inf}+T_*$ is a lower bound of the set $A$ which is greater than $t_{\inf}$, leading to a contradiction. Thus, we must have $w=v$ in $X_T^s$ and our solution obtained by fixed point iteration is unique in the entire solution space. 
\\[2mm]
\noindent
\textit{Continuous dependence on the data.} We will show that the data-to-solution map is continuous and, more precisely, locally Lipschitz. 
Let $T>0$ and $d\in \mathcal D_T := H^s(0,\infty) \times H^{s-2}(0,\infty) \times H^{\frac{2s-1}{4}}(0,T) \times H^{\frac{2s-3}{4}}(0,T)$ be such that the contraction condition \eqref{contr-cond} is satisfied, namely
\eee{\label{contr-cond3}
4c(s,T)^2 \sqrt T < \frac{1}{\no{d}_{\mathcal D_T}},
}
where $\no{d}_{\mathcal D_T}$ denotes the sum of the individual data norms. Thus, by our existence theory, $d \in \mathcal D_T$ yields a (unique) solution in $\overline{B(0, r(T))} \subset X_T^s$, where $r(T) := 2c(s, T) \no{d}_{\mathcal D_T}$. 
Let $R=R(d, T)>0$ be small enough so that
\eee{\label{contr-cond2}
4c(s,T)^2 \sqrt T < \frac{1}{\no{d}_{\mathcal D_T}+R}
}
and denote by $B(d,R)$ the  open ball in $\mathcal D_T$ centered at $d$ and with radius $R$. For any two data points $d_u = (u_0,u_1, \alpha_u,\beta_u)$ and $d_v= (v_0,v_1, \alpha_v,\beta_v)$ in $B(d,R) \subset \mathcal D_T$, we have
\ddd{\label{duv}
	\no{d_u}_{\mathcal D_T} 
	& \equiv \no{u_0}_{H^s(0,\infty)} + \no{u_1}_{H^{s-2}(0,\infty)} + 
	\no{\alpha_u}_{H^{\frac{2s-1}{4}}(0,T) } + \no{\beta_u}_{H^{\frac{2s-3}{4}}(0,T)} 
	 < \no{d}_{\mathcal D_T}+ R, 
	 \\
	\no{d_v}_{\mathcal D_T} &\equiv \no{v_0}_{H^s(0,\infty)} + \no{v_1}_{H^{s-2}(0,\infty)} + 
	\no{\alpha_v}_{H^{\frac{2s-1}{4}}(0,T) } + \no{\beta_v}_{H^{\frac{2s-3}{4}}(0,T)} < \no{d}_{\mathcal D_T}+ R.
}
Thus, from \eqref{contr-cond2} we obtain the conditions
\eee{\label{contr-condj}
4c(s,T)^2 \sqrt T < \frac{1}{\no{d_u}_{\mathcal D_T}}, \quad 
4c(s,T)^2 \sqrt T < \frac{1}{\no{d_v}_{\mathcal D_T}},
}
which from our existence theory (specifically, the contraction condition \eqref{contr-cond3}) guarantee corresponding unique solutions $u \in \overline{B(0, r_u)} \subset X_T^s$ and $v \in \overline{B(0, r_v)} \subset X_T^s$, where $r_u(T) := 2c(s, T) \no{d_u}_{\mathcal D_T}$ and $r_v(T) := 2c(s, T) \no{d_v}_{\mathcal D_T}$. 
In fact, $u$ and $v$ arise as fixed points of the relevant iteration maps 
in the corresponding closed balls in $X_T^s$, thus 
\eee{
	\no{u-v}_{X_T^s} 
	=
	\no{S_u(u) - S_v(v)}_{X_T^s} 
\equiv 
	\no{S\left[ u_0-v_0, u_1-v_1,\alpha_u-\alpha_v,\beta_u - \beta_v ; -\p_x^2 (u^2-v^2)\right]}_{X_T^s}.
}
Hence, employing estimate \eqref{lin-est} and then the algebra property, we find
\eee{
	\no{u-v}_{X_{T}^s} 
	 \leq 
	c(s, T) 
	\left(
	\no{d_u-d_v}_{\mathcal D_T}
	+
	\sqrt {T}  \left[  \no{u}_{X_T^s}+ \no{v}_{X_T^s} \right] \no{u-v}_{X_T^s}\right).
}
By \eqref{duv}, we have $r_u, r_v \leq 2c(s, T) \left(\no{d}_{\mathcal D_T} + R\right)$. Therefore,
\eee{
\left[ 
	1- 4 c(s,T)^2 \sqrt T \left(\no{d}_{\mathcal D_T} + R\right)
\right]
\no{u-v}_{X_T^s} 
\leq
c(s,T) \no{d_u-d_v}_{\mathcal {D}_T}
}
which, in view of \eqref{contr-cond2} (which guarantees that the factor on the left side is strictly positive), can be rearranged to
\eee{
\no{u-v}_{X_T^s} 
\leq
\frac{c(s,T)}{1- 4 c(s,T)^2 \sqrt T \left(\no{d}_{\mathcal D_T} + R\right)}
 \no{d_u-d_v}_{\mathcal {D}_T}.
}
This inequality amounts to  local Lipschitz continuity of the data-to-solution map and concludes the proof of Theorem \ref{lwp-t} on the local Hadamard well-posedness of the Robin problem~\eqref{gbous} for the ``good'' Boussinesq equation on the half-line.

\section{Linear and nonlinear theory for $0\leq s < \frac 12$}
\label{l-nl-low-s}

In this section, we consider the case of low regularity corresponding to the range $0 \leq s < \frac 12$. In particular, we establish the linear estimate of Theorem \ref{linear-low-t}, which can then be combined with a contraction mapping argument to yield the local well-posedness result of Theorem \ref{lwp-t-low} for the nonlinear problem \eqref{gbous}.

In the range $0\leq s < \frac 12$, we consider problem \eqref{gbous} under the standard assumptions (e.g. see~\cite{l1993,x2008,farah2009,x2010}) $u_1(x) = v_1'(x)$ and $\beta(t) = b'(t)$, where $v_1 \in H^{s-1}(0, \infty)$ and $b \in H^{\frac{2s+1}{4}}(0, T)$. This additional structure allows us to employ the extension results of \cite[Theorem 11.4]{lm1972} and Lemma \ref{Chris-l}, which are valid for Sobolev spaces of exponent greater than $-\frac 12$. Indeed, this condition is not satisfied by the exponent $\frac{2s-3}{4}$ of the original boundary datum $\beta$ in our current range of $s<\frac 12$; however, it is satisfied by the exponent $\frac{2s+1}{4}$ of $b$. 
\\[2mm]
\textbf{Alternative decomposition and extensions.}  
Since we are in a setting of low regularity, we may decompose the forced linear problem as
\eee{\label{dec-low}
S\big[u_0, v_1', \alpha, b'; -\p_x^2 w\big](x, t)
=
S\big[u_0, v_1', \alpha, b'; 0\big](x, t)
+
\int_0^t S\big[0,-\p_x^2 w(\cdot, t'),0,0; 0\big](x, t-t') dt'.
}
We will further decompose the first term on the right side as follows. First, we consider extensions $U_0 \in H^s(\R)$ and $\mathcal U_1 \in H^{s-1}(\R)$ of  $u_0 \in H^s(0, \infty)$ and $v_1 \in H^{s-1}(0, \infty)$ such that 
\eee{\label{ic-ext2}
\no{U_0}_{H^s(\R)} \leq 2\no{u_0}_{H^s(0,\infty)},
\quad
\no{\mathcal U_1}_{H^{s-1}(\R)} \leq 2\no{v_1}_{H^{s-1}(0,\infty)}.
}
The following result follows directly from the solution formula \eqref{ivp-sol}:
\begin{lemma}\label{ivp-l}
Let $S[U_0, \mathcal U_1'; 0]$ denote the solution to the Cauchy problem \eqref{ivp} with data $U_0$ and $\mathcal U_1'$. Then, 
\eee{
\p_x^j S[U_0, \mathcal U_1'; 0](x, t) = \p_t S[V_{0, j}, V_{1, j}; 0](x, t),
\quad
j=1, 2,
}
where $V_{1, j} = \p_x^j U_0$ and $V_{0,j}$ is defined via its Fourier transform by
\eee{
\mathcal F\{V_{0, j}\}(k) = \frac{(ik)^{j-1}}{1+k^2} \mathcal F\{\mathcal U_1\}(k).
}
\end{lemma}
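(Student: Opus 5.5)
The plan is to verify the identity on the Fourier side in $x$, where both sides are explicit multipliers acting on $\mathcal F\{U_0\}$ and $\mathcal F\{\mathcal U_1\}$. It then suffices to check that the multipliers agree for every $k\in\R$ and $t$; Fourier inversion gives the identity in $x$, in the sense of tempered distributions for each fixed $t$.

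\textbf{Step 1: rewrite the transform of the homogeneous solution.} Starting from \eqref{ivp-sol}, I collect the exponentials $e^{\pm i\omega t}$ into trigonometric form:
\[
\mathcal F\{S[U_0, U_1; 0]\}(k,t) = \frac{\sin(\omega t)}{\omega}\,\mathcal F\{U_1\}(k) + \cos(\omega t)\,\mathcal F\{U_0\}(k),
\]
with $\omega^2 = k^2(1+k^2)$ and $\omega$ real for $k\in\R$. At $\omega=0$, i.e. $k=0$, the quotient $\sin(\omega t)/\omega$ is understood as its limit $t$, so there is no singularity.

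\textbf{Step 2: compute the left side.} I set $U_1=\mathcal U_1'$, so that $\mathcal F\{U_1\} = ik\,\mathcal F\{\mathcal U_1\}$, and apply $\p_x^j$, which multiplies by $(ik)^j$. This gives
\[
(ik)^{j+1}\frac{\sin(\omega t)}{\omega}\,\mathcal F\{\mathcal U_1\} + (ik)^j\cos(\omega t)\,\mathcal F\{U_0\}.
\]

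\textbf{Step 3: compute the right side.} Applying $\p_t$ to the formula of Step 1 with data $V_{0,j}, V_{1,j}$ gives
\[
\cos(\omega t)\,\mathcal F\{V_{1,j}\} - \omega\sin(\omega t)\,\mathcal F\{V_{0,j}\}
= (ik)^j\cos(\omega t)\,\mathcal F\{U_0\} - \frac{\omega\,(ik)^{j-1}}{1+k^2}\sin(\omega t)\,\mathcal F\{\mathcal U_1\}.
\]
Here the $\cos$ term comes from $\p_t$ of $\sin(\omega t)/\omega$ acting on $\mathcal F\{V_{1,j}\}$, and the $\sin$ term from $\p_t$ of $\cos(\omega t)$ acting on $\mathcal F\{V_{0,j}\}$.

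\textbf{Step 4: match the multipliers.} The $U_0$ terms agree identically. For the $\mathcal U_1$ terms I need
\[
\frac{(ik)^{j+1}}{\omega} = -\frac{(ik)^{j-1}\,\omega}{1+k^2}.
\]
Writing $(ik)^{j+1} = -k^2 (ik)^{j-1}$ and using $k^2/\omega = \omega/(1+k^2)$, which is exactly the relation $\omega^2 = k^2(1+k^2)$, gives this equality. When $k=0$ both sides vanish, because $\omega\to 0$ there.

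\textbf{Anticipated difficulty.} The only delicate point is functional-analytic rather than algebraic: $V_{0,j}$ must be a legitimate tempered distribution in a Sobolev space. This holds because the multiplier $(ik)^{j-1}/(1+k^2)$ is smooth and of order $j-3$, so $V_{0,j}\in H^{s+2-j}(\R)$. Likewise $V_{1,j}\in H^{s-j}(\R)$, so the pair $(V_{0,j},V_{1,j})$ is admissible data for \eqref{ivp} with regularity index $s+2-j$. The restriction $j\in\{1,2\}$ plays no role in the algebra; it only reflects the intended later use for the boundary operators $\p_x+\gamma I$ and $\p_x^2+\delta\p_x$.
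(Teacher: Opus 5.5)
Your proof is correct and is essentially the paper's argument: the paper simply states that the lemma follows directly from the solution formula \eqref{ivp-sol}, and your comparison of the Fourier multipliers $\cos(\omega t)$ and $\sin(\omega t)/\omega$, closed by the identity $\omega^2=k^2(1+k^2)$, is exactly that computation written out. Your regularity remark ($V_{0,j}\in H^{s+2-j}(\R)$ and $V_{1,j}\in H^{s-j}(\R)$) is consistent with, and slightly sharper than, the bound the paper later uses in \eqref{8.5}.
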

In view of Lemma \ref{ivp-l},  the first term on the right side of \eqref{dec-low} can be written as
\eee{\label{dec-low-h}
S\big[u_0, v_1', \alpha, b'; 0\big]
=
S\big[U_0, \mathcal U_1'; 0\big]\big|_{x>0}
+
S\big[0, 0, \varphi_0, q_0'; 0\big]
}
where
\eee{\label{psi0q0}
\varphi_0(t) = \alpha(t) - \left(\p_x + \gamma I\right)S\big[U_0, \mathcal U_1'; 0\big](0, t),
\quad
q_0(t) = b(t) - S[V_{0, 2}, V_{1, 2}; 0] (0, t) + \delta S[V_{0, 1}, V_{1, 1}; 0] (0, t).
}
The time estimates \eqref{ivp-te-t} and inequalities \eqref{ic-ext2} ensure that the datum $\varphi_0 \in H^{\frac{2s-1}{4}}(0, T)$ with the estimate
\eee{
\no{\varphi_0}_{H^{\frac{2s-1}{4}}(0, T)}
\lesssim
\no{\alpha}_{H^{\frac{2s-1}{4}}(0, T)} 
+
\no{u_0}_{H^s(0, \infty)}
+
\no{v_1}_{H^{s-1}(0, \infty)}, \quad s \in \R.
\label{phi01}
}
Moreover,  
\aaa{\label{psi0-est2}
\no{q_0}_{H^{\frac{2s+1}{4}}(0, T)}
&\leq
\no{b}_{H^{\frac{2s+1}{4}}(0, T)}
+
\no{S[V_{0, 2}, V_{1, 2}; 0] (0, \cdot)}_{H^{\frac{2s+1}{4}}(0, T)}
+
|\delta|
\no{S[V_{0, 1}, V_{1, 1}; 0] (0, \cdot)}_{H^{\frac{2s+1}{4}}(0, T)}
\nn\\
&\lesssim
\no{b}_{H^{\frac{2s+1}{4}}(0, T)}
 + \no{u_0}_{H^{s}(0, \infty)} + \no{v_1}_{H^{s-1}(0, \infty)}, \quad s \in \R,
}
with the last step in view of \eqref{ic-ext2}  and the fact that, by \eqref{ivp-te-t} and the definition of $V_{0, j}, V_{1, j}$, 
	\aaa{
		\no{S[V_{0,j}, V_{1,j}; 0](x)}_{H_t^{\frac{2s+1}{4}}(0, T)}^2 
		&\lesssim
		\no{V_{0,j}}_{H^{s}(\R)}^2 + \no{V_{1,j}}_{H^{s-2}(\R)}^2
		\nn \\
		& = 
			\int_\R \left(1+k^2\right)^{s-2} \left| k^{j-1}\mathcal F \{\mathcal U_1\} (k) \right|^2\, dk
		+
		\no
		{\p_x^j U_0}_{H^{s-2}(\R)}^2
		\nn\\
		&\leq
		\no{\mathcal U_1}_{H^{s-1}(\R)}^2 + \no{U_0}_{H^s(\R)}^2, 
		\quad j=1, 2, \ s \in \R. \label{8.5}
	} 

By  Lemma \ref{Chris-l}, for $ 0\leq s < \frac 12$   the extension $\Phi_0$ of $\varphi_0 \in H^{\frac{2s-1}{4}}(0, T)$ by $0$ outside $(0, T)$ belongs to $H^{\frac{2s-1}{4}}(\R)$ with the estimate
\eee{\label{phi0ext}
\no{\Phi_0}_{H^{\frac{2s-1}{4}}(\R)} 
\leq
c(T) \no{\varphi_0}_{H^{\frac{2s-1}{4}}(0, T)}.
}
In addition, by \cite[Theorem 11.4]{lm1972}, for $0 \leq s < \frac 12$ the extension $Q_0$ of $q_0$ by $0$ outside $(0, T)$ belongs to $H^{\frac{2s+1}{4}}(\R)$ with the estimate
\eee{\label{q0ext}
\no{Q_0}_{H^{\frac{2s+1}{4}}(\R)} \leq c(T) \no{q_0}_{H^{\frac{2s+1}{4}}(0, T)}.
}
In both of the above estimates, the constant $c(T)$ remains bounded as $T\to 0^+$ (see comment below \eqref{zero-ext}).

From \eqref{phi0ext} and \eqref{q0ext}, we may employ  Theorem \ref{r-t} with $\varphi(t) = \Phi_0(t)$ and $\psi(t) = Q_0'(t)$, which are permitted choices for the boundary data in that theorem thanks to the fact that they are supported inside $(0, T)$, to obtain
\eee{
\no{S\big[0, 0, \Phi_0, Q_0'; 0\big]}_{X_T^s} 
\leq
c(s, T) \left(\no{\varphi_0}_{H^{\frac{2s-1}{4}}(0, T)} + \no{q_0}_{H^{\frac{2s+1}{4}}(0, T)}\right),  \quad 0\leq s < \frac 12, \ t\in (0, T).
}
Noting that $S\big[0, 0, \Phi_0, Q_0'; 0\big] \equiv S\big[0, 0, \varphi_0, q_0'; 0\big]$ on $(x, t) \in (0, \infty) \times (0, T)$ and using the estimates \eqref{phi01} and \eqref{psi0-est2}, we infer 
\ddd{\label{ge1}
\no{S\big[0, 0, \varphi_0, q_0'; 0\big]}_{X_T^s}
\leq
c(s, T) \left(
\no{u_0}_{H^{s}(0, \infty)} + \no{v_1}_{H^{s-1}(0, \infty)}
+
\no{\alpha}_{H^{\frac{2s-1}{4}}(0, T)}
+
\no{b}_{H^{\frac{2s+1}{4}}(0, T)}
\right)
}
for all $0 \leq s < \frac 12$ and $t\in (0, T)$.
Returning to the decomposition \eqref{dec-low-h}, combining estimate \eqref{ge1}with the space estimate \eqref{ivp-se-t}  for the homogeneous Cauchy problem solution $S\big[U_0, \mathcal U_1'; 0\big]$ and the extension inequalities \eqref{ic-ext2}, we obtain
\eee{\label{hibvp-low}
\no{S\big[u_0, v_1', \alpha, b'; 0\big]}_{X_T^s}
\leq
c(s, T) \left(
\no{u_0}_{H^{s}(0, \infty)} + \no{v_1}_{H^{s-1}(0, \infty)}
+
\no{\alpha}_{H^{\frac{2s-1}{4}}(0, T)}
+
\no{b}_{H^{\frac{2s+1}{4}}(0, T)}
\right)
}
for all $0 \leq  s < \frac 12$ and $t\in (0, T)$.

Next, differentiating \eqref{dec-low} in $t$ while observing that $S\big[0,-\p_x^2 w(\cdot, t'),0,0; 0\big](x, t-t')\big|_{t'=t} = 0$, we have
\eee{
\no{S\big[u_0, v_1', \alpha, b'; -\p_x^2 w\big]}_{X_T^s}
\leq
\no{S\big[u_0, v_1', \alpha, b'; 0\big]}_{X_T^s}
+
\int_0^T \no{S\big[0,-\p_x^2 w(t'),0,0; 0\big](\cdot-t')}_{X_T^s} dt'.
}
The first term on the right side can be handled via the estimate \eqref{hibvp-low}, which can also be employed for the integrand of the $t'$ integral to yield
\eee{\label{hibvp-low2}
\no{S\big[0,-\p_x^2 w(t'),0,0; 0\big](\cdot-t')}_{X_T^s}
\leq
c(s, T) \no{w(t')}_{H_x^s(0, \infty)},  \quad
0 \leq s < \frac 12, \ t'\in (0, T). 
}
Note importantly that we can employ \eqref{hibvp-low} for $S\big[0,-\p_x^2 w(\cdot, t'),0,0; 0\big](x, t-t')$ because it does not require any compatibility conditions between the data, as we are in a low regularity setting. This would not have been the case in the higher regularity setting of $s>\frac 12$, which is why the decomposition \eqref{dec} was used instead of \eqref{dec-low}.
Combining \eqref{dec-low} with \eqref{hibvp-low}-\eqref{hibvp-low2}, we deduce 
\aaa{ 
\no{S\big[u_0, v_1', \alpha, b'; -\p_x^2 w\big]}_{X_T^s}
&\leq
c(s, T) \Big(
\no{u_0}_{H^{s}(0, \infty)} + \no{v_1}_{H^{s-1}(0, \infty)}
+
\no{\alpha}_{H^{\frac{2s-1}{4}}(0, T)}
+
\no{b}_{H^{\frac{2s+1}{4}}(0, T)}
\nn\\
&\hspace*{1.8cm}
+
\no{w}_{L_t^1((0, T); H_x^s(0, \infty))} 
 \Big), \quad 0 \leq s < \frac 12,
  \label{834}
}
which corresponds to the $X_T^s$ part of the $Y_T^s$ norm of the linear estimate \eqref{lin-est-low} in Theorem \ref{linear-low-t}.
\\[2mm]
\textbf{Motivation for the solution space $Y_T^s$.}  
Attempting to use the linear estimate \eqref{834} for the analysis of the nonlinear problem \eqref{gbous}, we have $w = u^2$ so we need to handle  the norm $\no{u^2(t)}_{H_x^s(0, \infty)}$.   
Since $s\geq 0$, this can be done via the fractional Leibniz rule (see \cite{kp1988,cw1991,bmmn2014} and, in particular, \cite[Theorem 1]{go2014}) along with the zero extension Theorem 11.4 in \cite{lm1972}, resulting in
\eee{\label{u20}
\no{u^2(t)}_{H_x^s(0, \infty)}
\leq
\no{u(t)}_{L_x^\infty(0, \infty)} \no{u(t)}_{H_x^s(0, \infty)}, \quad 0 \leq s<\frac12, \ t \in (0, T).
}
Therefore, by also applying the Cauchy-Schwarz inequality in $t$,  \eqref{834} yields
\ddd{
\no{S\big[u_0, v_1', \alpha, b'; -\p_x^2(u^2)\big]}_{X_T^s}
&\leq
c(s, T) \Big(
\no{u_0}_{H^{s}(0, \infty)} + \no{v_1}_{H^{s-1}(0, \infty)}
+
\no{\alpha}_{H^{\frac{2s-1}{4}}(0, T)}
+
\no{b}_{H^{\frac{2s+1}{4}}(0, T)} \label{838} \\
&\hspace*{1.8cm}
+
T^{\frac14}
\no{u}_{L_t^\infty((0, T); H_x^s(0, \infty))}
\no{u}_{L_t^4((0, T); L_x^\infty(0, \infty))} 
 \Big), \quad 0 \leq s < \frac 12.
}
Estimate \eqref{838} motivates the solution space $Y_T^s$ instead of $X_T^s$ when $s<\frac 12$. We note that, since $t \in (0, T)$ is finite, we could also have $L_t^2$ in place of $L_t^4$ in the definition \eqref{yst} of $Y_T^s$; however, as we shall see below, the space $L_t^4$ is a more natural choice as it can also accommodate $t\in \R$. 
In order to complete the proof of Theorem~\ref{linear-low-t}, it remains to establish the analogue of \eqref{834} in the new space $L_t^4((0, T); L_x^\infty(0, \infty))$ introduced via~\eqref{838}.
\\[2mm]
\textbf{Additional linear estimates in $L_t^4((0, T); L_x^\infty(0, \infty))$.}
We begin with the homogeneous linear Cauchy problem~\eqref{ivp}, whose solution formula $U = S[U_0,0;0] + S[0, U_1;0]$ is given by \eqref{ivp-sol}. By \cite[Lemma 2.4]{l1993}, 
\eee{\label{S+est}
\no{S[U_0,0;0]}_{L_t^4((0, T); L_x^\infty(0,\infty))} 
 \leq c \, (1+T^{\frac14}) \no{U_0}_{L^2(0,\infty)}.
}
Moreover, since 
\eee{
S[0, U_1;0](x, t)  =
	\frac1{2\pi } \int_\R \frac{e^{ikx}}{2i\omega}\left( e^{i\omega t}-  e^{-i\omega t}\right)\mathcal F\{U_1\}(k)\,dk
}
we adapt the proof of \cite[Lemma 2.5]{l1993} in order to exploit the removable singularity of the above integrand at $k=0$ and hence avoid the need to assume $U_1 = \mathcal U_1'$ at this very point (although this assumption is otherwise made due to the extensions discussed earlier). 

Specifically, using the smooth cut-off function $\theta \in C_c^\infty(\R)$ such that $0 \leq \theta(t) \leq 1$ for all $t\in \R$, $\theta(t) \equiv 1$ on $[- 1, 1]$, and $\theta(t) \equiv 0$ on $[-2, 2]^c$, we write 
\eee{
S[0,U_1;0] 
=
S_{\text{far}}^+[0,U_1;0] 
-S_{\text{far}}^-[0,U_1;0] + \left(S_{\text{near}}^+[0,U_1;0]  - S_{\text{near}}^-[0,U_1;0]\right)
}
where
\aaa{
S_{\text{far}}^\pm[0,U_1;0](x, t)
&:=
\frac1{2\pi} \int_\R e^{ikx\pm i\omega t} \left|\omega''\right|^{\frac 14} 
\left(1-\theta(k)\right)
\frac{\mathcal F\{U_1\}(k)}{2i\omega \left|\omega''\right|^{\frac 14}} \,dk,
\\
S_{\text{near}}^\pm[0,U_1;0](x, t)
&:=
\frac1{2\pi} \int_\R e^{ikx\pm i\omega t} \theta(k)  \frac{\mathcal F\{U_1\}(k)}{2i\omega}\,dk.
}
Since $\omega'' = k\left(3+2k^2\right) \left(1+k^2\right)^{-\frac 32}$, 
we can employ \cite[Theorem 2.1]{kpv1991-osc} to infer 
\eee{\label{sf}
\no{S_{\text{far}}^\pm[0,U_1;0]}_{L_t^4((0, T); L_x^\infty(\R))}
\lesssim
\Big\|\left(1-\theta(k)\right) \frac{\mathcal F\{U_1\}(k)}{\omega \left|\omega''\right|^{\frac 14}}\Big\|_{L^2(\R)}
\lesssim
\no{U_1}_{H^{-2}(\R)}
}
after also noting that $5 \cdot 2^{-\frac32} \leq |\omega''(k)| \leq 5$ for $|k| \geq 1$.
In addition, by the Sobolev embedding theorem and the fact that $\frac{\sin^2(\omega t)}{\omega^2} \leq t^2$,
\aaa{
\no{S_{\text{near}}^+[0,U_1;0](t)-S_{\text{near}}^-[0,U_1;0](t)}_{L_x^\infty(\R)}^2
&\leq
\no{S_{\text{near}}^+[0,U_1;0](t)-S_{\text{near}}^-[0,U_1;0](t)}_{H_x^1(\R)}^2
\nn\\
&=
\int_{|k|\leq 2} \left(1+k^2\right) \left|\theta(k)\right|^2 \frac{\sin^2(\omega t)}{\omega^2}  \left|\mathcal F\{U_1\}(k)\right|^2 dk
\nn\\
&\leq
\int_{|k|\leq 2} \left(1+k^2\right) \left|\theta(k)\right|^2 t^2 \left|\mathcal F\{U_1\}(k)\right|^2 dk
\lesssim
t^2 \no{U_1}_{H^{-2}(\R)}^2.\label{sn}
}
Combining \eqref{sf} and \eqref{sn}, we have
\eee{\label{su1-est2}
\no{S[0,U_1;0]}_{L_t^4((0, T); L_x^\infty(\R))}
\lesssim
\no{U_1}_{H^{-2}(\R)}
}
which together with estimate \eqref{S+est} yields
\eee{\label{ivp-est-low}
\no{U}_{L_t^4((0, T); L_x^\infty(\R))}
\lesssim
\no{U_0}_{L^2(\R)} + \no{U_1}_{H^{-2}(\R)} 
 \leq
\no{U_0}_{H^s(\R)} + \no{U_1}_{H^{s-2}(\R)}, \quad s\geq 0.
}

We move on to the forced Cauchy problem \eqref{flb-ivp}, whose solution  is given by \eqref{W-sol}.
By Minkowski's integral inequality and estimate \eqref{su1-est2} with $U_1 = F(t')$,  
\aaa{
\no{Z}_{L_t^4((0, T); L_x^\infty(\R))} 
& \leq
  \left[ \int_0^T \left(\int_0^t \no{S[0,F(\cdot,t');0](\cdot,t-t')}_{L_x^\infty(\R)}\,dt'\right)^4dt \right]^\frac14 \nn\\
  & \leq
  \int_0^T \left( \int_{t'}^T \no{S[0,F(\cdot,t');0](\cdot,t-t')}_{L_x^\infty(\R)}^4 dt\right)^\frac14 dt' \nn\\
  &= \int_0^T \no{S[0,F(t');0]}_{L^4_\tau((0,T-t');L_x^\infty(\R))} dt'
  \lesssim \no{F}_{L_t^1((0, T); H_x^{-2}(\R))}.
   \label{mink-t}
}

Next, we turn our attention to the reduced initial-boundary value problem \eqref{lgbous-r}, whose solution \eqref{v-sol} is the sum of the terms \eqref{iterms}. 
We only provide the details for $I_{\varphi, 1}$, since the estimation of the other three terms is entirely analogous.
We begin with the portion of $I_{\varphi,1}$ along the real axis, namely the integral $I_{\varphi,1}^r$ given by \eqref{ifr-def}. Since this term makes sense for all $x\in \R$, we can readily employ the Cauchy problem estimate  \eqref{S+est}  along with Plancherel's theorem to deduce
\aaa{
\no{I_{\varphi,1}^r}_{L_t^4([0,T];L_x^\infty(\R))}
&\lesssim
(1+T^{\frac14}) 
\no{\frac{k(k+\nu(k))}{i\omega(k)\Delta_1(k)}\cdot
	\nu(k)(\nu(k)-i\delta)\mathcal F\{\varphi\}(-\omega(k))}_{L^2(a,\infty)}
\nn\\
&\lesssim
(1+T^{\frac14})  \no{\varphi}_{H^{-\frac14}(\R)}
\label{irest}
}
with the last inequality following along the lines of \eqref{redI-r1}-\eqref{redI-r2} with $s=0$.

Moving on to the imaginary axis and the term $I^i_{\varphi,1}$ given by \eqref{im-phi}, we recall that this term does not make sense for $x<0$ so the approach used for $I^r_{\varphi,1}$ cannot be applied here. Instead, we observe that
\eee{
I^i_{\varphi,1}(x,t) 
= 
\int_a^\infty e^{-\lambda x-i\check{\omega}t}\mathcal F\{f\}(\lambda)\,d\lambda
}
where, for $\check{\omega} = \check{\omega}(\lambda) :=\omega(i\lambda)$ and $\check \nu, \check \Delta_1$ defined analogously, the function $f$ is defined via its Fourier transform 
\eee{
\mathcal F\{f\}(\lambda) := 
\chi_{(a, \infty)}(\lambda) \, \frac{\lambda(i\lambda+\check \nu)}{i\check \omega \check \Delta_1} \, \check \nu (\check \nu-i\delta) \mathcal{F}\{\varphi\} (-\check \omega).
}
Writing 
\eee{
I_{\varphi,1}^i(x,t) 
=
 \int_\R f(y)  K(x,y,t) dy, 
 \quad
K(x,y,t) := \int_a^\infty e^{-\lambda x - i\check{\omega}t-i\lambda y}\,d\lambda
}
and using the duality norm combined with the Cauchy-Schwarz inequality, we have
\aaa{
\no{I^i_{\varphi,1}}_{L_t^4(\R;L_x^\infty(0, \infty))} 
&=
 \sup_{\no{\eta}_{L_t^{4/3}L_x^1}=1}
 \left|
 \int_\R f(y) \int_\R \int_0^\infty K(x,y,t)\eta(x,t)\,dx\,dt\,dy
 \right| \nn \\
 & \leq 
(\sup_{\eta} M) \no{f}_{L^2(\R)}, 
\quad
M := \no{\int_\R \int_0^\infty K(x,y,t) \eta(x,t)\,dx\,dt}_{L_y^2(\R)}.
\label{C-S}
}
Now, by Plancherel's theorem and the calculations \eqref{L2-norm}-\eqref{rvp-est} for $j=0$, we have $\no{f}_{L^2(\R)} \lesssim \no{\varphi}_{H^{-\frac14}(\R)}$ which in turn implies
\eee{\label{632}
\no{I^i_{\varphi,1}}_{L_t^4(\R;L_x^\infty(0, \infty))} \lesssim ( \sup_{\eta} M) \no{\varphi}_{H^{-\frac14}(\R)}.
}
It remains to show that $\displaystyle \sup_\eta M$ is finite. Keeping in mind that $\check\omega \in \R$, we consider
\eee{
M^2 
=
 \int_\R \int_\R \int_0^\infty \int_0^\infty 
\eta(x,t) \overline{\eta(z,\tau)} N(x,z,t,\tau)\,dz\,dx\,d\tau\,dt,
}
where 
$N(x,z,t,\tau) := \displaystyle \int_\R K(x,y,t) \overline{K(z,y,\tau)}\,dy$. 
Now, since $K(x,y,t) = \mathcal F_\lambda \{e^{-\lambda x -i\check{\omega}t}\chi_{(a,\infty)}(\lambda)\}(y)$, 
\aaa{
N(x,z,t,\tau)
& =
\int_a^\infty e^{-\lambda z+i\check{\omega}\tau} \int_\R e^{i\lambda y}K(x,y,t)\,dy\,d\lambda
\nn\\
& \simeq 
\int_a^\infty e^{-\lambda z+i\check{\omega}\tau} \cdot e^{-\lambda x-i\check{\omega}t}\,d\lambda
= 
\wtil{N}(x+z,t-\tau),
}
where $\displaystyle \wtil N (x,t) := \int_a^\infty e^{-\lambda x-i\check{\omega}\tau}\,d\lambda$. Hence, using also H\"older's inequality in $t$, we find
\aaa{
M^2 
& \lesssim 
\int_\R 
 \left( \int_\R
	\sup_x \left|
		\int_0^\infty \overline{\eta(z,\tau)} \wtil N(x+z,t-\tau)\,dz
	\right|
d\tau 
\right) 
\no{\eta(t)}_{L_x^1(0, \infty)}\, dt 
\nn\\
& \leq 
\no{\int_\R \sup_{x,z} |\wtil N(x+z,t-\tau)| \no{\eta(\tau)}_{L_z^1(0, \infty)} \,d\tau}_{L_t^4(\R)}.
\label{m2bound}
}
Consider $N_b(x,t) := \displaystyle \int_a^b e^{-\lambda x -i \check\omega t}\,d\lambda$ and note that $ |(-\check\omega)''| \to 2 \text{ as } \lambda \to \infty$ thus for large enough $a$ we have $ |(-\check\omega)''| \geq 1 $ for all $\lambda \in [a, b]$. Hence, by the van der Corput Lemma \cite[page 334]{s1993}, 
\eee{
|N_b(x,t)| \leq c |t|^{-\frac12} \left( e^{-bx} + \int_a^b  xe^{-\lambda x}\,d\lambda \right)
}
where the constant $c$ is independent of $a$ and $b$. In fact, since 
since $x,a,b \geq 0$, it follows that $ |N_b(x,t)| \leq 2c |t|^{-\frac12}$ and taking the limit $b \to \infty$ we  infer
\eee{
	|\wtil N (x+z,t-\tau)| \lesssim |t-\tau|^{-\frac12}.
}
Combining this bound with \eqref{m2bound} and the Sobolev-Hardy-Littlewood inequality \cite[Theorem 2.6]{lp2009}, we conclude that
\eee{\label{m2}
M^2 \lesssim \no{\int_\R |t-\tau|^{-\frac12} \no{\eta(\tau)}_{L_z^1(0, \infty)} d\tau}_{L_t^4(\R)}
\lesssim \no{\eta}_{L_\tau^{4/3}(\R; L_z^1(0, \infty))} =1, 
}
which completes the estimation of $I_{\varphi, 1}^i$ in view of \eqref{632}. It is important to note that the space $L_t^4(\R)$ is the only choice that allows us to combine the duality norm with the Sobolev-Hardy-Littlewood inequality as shown above. In this sense, $L_t^4(0, T)$ is the natural choice of space to include in  the definition \eqref{yst}.

Finally, we consider the term $I_{\varphi, 1}^C$ given by \eqref{234}, which corresponds to the integral along the finite quarter-circle depicted in Figure \ref{fig:0}. We have 
\aaa{
	\no{I_{\varphi,1}^C(t)}_{L_x^\infty(0,\infty)} 
	& \lesssim
	a \sup_{x \in (0, \infty)} 
	\int_0^{\frac{\pi}{2}} e^{\text{Im}(\omega(ae^{i\theta})t)} e^{-ax\sin \theta}|\mathcal F\{\varphi\}(-\omega(ae^{i\theta}))|\,d\theta \nn \\
	& \leq
	ae^{a\sqrt{1+a^2}T} 
	\int_0^{\frac{\pi}{2}} |\mathcal F\{\varphi\}(-\omega(ae^{i\theta}))|\,d\theta
}
and by the earlier calculations from \eqref{eta}-\eqref{Conv2}, we obtain
\eee{
\no{I_{\varphi,1}^C(t)}_{L_x^\infty(0,\infty)} 
 \leq
	c(a,d,T) \no{\varphi}_{H^{-d}(\R)}, \quad d \in \N, 
}
where the constant 
\eee{
c(a,d,T) =  
	\frac{\pi a^{2d+1}e^{a\sqrt{1+a^2}T}}{2(1-e^{-(T+1)})^d}\sqrt{(d+1)T+d}\, \big(1+e^{-(T+1)}\big)^d
}
 remains bounded as $T\to 0^+$. Thus,  
\eee{\label{icest}
	\no{I_{\varphi,1}^C}_{L^4_t((0, T);L_x^\infty(0,\infty))} \lesssim c(a,d,T) \, T^{\frac 14} \no{\varphi}_{H^{-d}(\R)}, \quad d \in \N.
}

Overall, the estimates \eqref{irest}, \eqref{632},  \eqref{m2} and \eqref{icest} yield 
\eee{
	\no{I_{\varphi,1}}_{L^4_t((0, T);L_x^\infty(0,\infty))} 
	 \lesssim
	 c(a,T)
	\no{\varphi}_{H^{-\frac 14}(\R)}, 	
}
where $c(a, T)$ remains bounded as $T\to 0^+$. Estimating the remaining terms in \eqref{iterms} analogously, we obtain 
\eee{\label{red-low}
	\no{v}_{L^4_t((0, T);L_x^\infty(0,\infty))} 
	\lesssim 
	 c(a,T) 
	 \left( 
	 	\no{\varphi}_{H^{-\frac14}(\R)} + \no{\psi}_{H^{-\frac34}(\R)}
	 \right).
}
Estimate \eqref{red-low} together with the homogeneous Cauchy estimate \eqref{ivp-est-low} for $U_1 = \mathcal U_1'$ defined by \eqref{ic-ext2} as well as the forced Cauchy estimate~\eqref{mink-t} for $F = -\p_x^2 W$, where $W(t) \in H_x^s(\R)$ is the zero extension of $w(t) \in H_x^s(0, \infty)$, results in the linear estimate \eqref{lin-est-low}, completing the proof of Theorem \ref{linear-low-t} for the forced linear problem \eqref{lgbous}. 
Theorem~\ref{lwp-t-low} for the local well-posedness of the nonlinear problem \eqref{gbous} in the low regularity range of $0\leq s<\frac 12$ then follows by combining the linear estimate \eqref{lin-est-low} with a contraction mapping argument entirely analogous to the one presented for the case of $s>\frac 12$ in Section \ref{lwp-s}.

\section{Linear problem solution via Fokas's unified transform}
\label{ut-s}

In this section, we solve problem \eqref{lgbous} for the forced linear ``good'' Boussinesq equation on the half-line with nonzero Robin boundary data. We derive a novel solution formula via the unified transform (also known as the Fokas method), which provides the direct analogue of the Fourier transform in the case of spatial domains that involve a boundary \cite{f1997,f2008}.
For the purpose of this section, we work under the assumption of initial and boundary data with sufficient smoothness and decay at infinity. 

For any $\phi \in L^2(0, \infty)$, we consider the half-line Fourier transform pair
\eee{\label{hl-ft}
\what \phi(k) = \int_0^\infty e^{-i k x} \phi(x) \,dx, 
\quad \text{Im}(k) \leq 0,
\qquad
\phi(x) = \frac{1}{2\pi} \int_{\mathbb R} e^{i k x} \what \phi(k) \,dk, 
\quad
x\in (0, \infty),
}
where we note that $\what \phi$ is analytic in the lower half of the complex $k$-plane via a Paley-Wiener theorem \cite[Theorem 7.2.4]{s1994}.
Taking the half-line Fourier transform of equation \eqref{lgbous} 
and using the boundary conditions in~\eqref{lgbous} to eliminate $u_x(0, t)$ and $u_{xx}(0, t)$, we have
\ddd{\label{ode}
\p_t^2 \what u(k, t) + \omega(k)^2 \,  \what u( k, t) &= 
 \what{f}(k,t)+g_3(t)
 +
 \left[i \gamma \delta k + \gamma\left(1+k^2\right) -ik\left(1+k^2\right)\right] g_0(t)
 \\
 &\quad
 + ik \beta(t) - \left[i \delta k + \left(1+k^2\right)\right]\alpha(t)
}
where we have introduced the notation $g_j(t) := \p_x^j u(0,t)$, $j=0, 3$ and  the dispersion relation
\ddd{\label{om}
\omega(k):= 
k\left(1+k^2\right)^{\frac12}, \quad k \in \C \setminus \mathcal{B},
}
is made single-valued via a branch cut along the line segment $\mathcal{B} = i[-1,1]$ (red segment in Figure \ref{fig:1}) and defining
\eee{\label{bcut}
\left(1+k^2\right)^{\frac12}
=
\sqrt{ \left|1+k^2\right|} \, e^{i(\theta_1+\theta_2-\pi)/2}, \quad \theta_1, \theta_2 \in [0, 2\pi], 
}
with the angles $\theta_{1, 2}$ corresponding to $k\pm i$ and measured counterclockwise, starting below the branch points $\mp i$.
Based on this definition, for $k\in \R$ we have $\omega(k) = |k| \sqrt{|1+k^2|}$ implying that $\omega$ is continuous from either side of $k=0$ along the real axis. 

Solving the second-order linear constant-coefficient ODE \eqref{ode} via variation of parameters while taking into account the initial conditions in \eqref{lgbous}, we find the following spectral identity which is known as the global relation:
\aaa{\label{gr0}
&\what u (k,t)
= \frac{e^{i\omega t}}{2i\omega} \left(\what{u}_1(k)+ i\omega \what{u}_0(k)\right) - \frac{e^{-i\omega t}}{2i\omega} \left( \what{u}_1(k)-i\omega \what{u}_0(k)\right) +  \frac{e^{i\omega t} }{2i\omega}\int_0^t e^{-i\omega\tau} \what f(k,\tau)\,d\tau -\frac{e^{-i\omega t}}{2i\omega} \int_0^t e^{i\omega\tau} \what f(k,\tau)\,d\tau \nn\\
& \quad + \frac{e^{i\omega t}}{2i\omega} \left\{  \wtil g_3(\omega, t)
 +
 \left[i \gamma \delta k + \gamma\left(1+k^2\right) -ik\left(1+k^2\right)\right] \wtil g_0(\omega, t)
 + ik \wtil \beta(\omega, t) - \left[i \delta k + \left(1+k^2\right)\right] \wtil \alpha(\omega, t)\right\} \\
& \quad -\frac{e^{-i\omega t}}{2i\omega}  \left\{  \wtil g_3(-\omega, t)
 +
 \left[i \gamma \delta k + \gamma\left(1+k^2\right) -ik\left(1+k^2\right)\right] \wtil g_0(-\omega, t)
 + ik \wtil \beta(-\omega, t) - \left[i \delta k + \left(1+k^2\right)\right] \wtil \alpha(-\omega, t)\right\}, \nn
}
where $\wtil{g}(\omega, t) := \displaystyle \int_0^t e^{-i\omega\tau} g(\tau)\,d\tau$. 
Then, inverting via \eqref{hl-ft}, we arrive at the integral representation 
\aaa{\label{ir}
&2\pi u(x,t)=   \int_{\R} \frac{e^{ikx}}{2i\omega}\left\{e^{i\omega t}\left[\what{u}_1(k)+ i\omega \what{u}_0(k)\right\}-e^{-i\omega t}\left\{ \what{u}_1(k)-i\omega \what{u}_0(k)\right]\right\}dk \\
&  +  \int_{\R} \frac{e^{ikx}}{2i\omega} \left\{ e^{i\omega t } \int_0^t e^{-i\omega\tau} \what f(k,\tau)\,d\tau - e^{-i\omega t} \int_0^t e^{i\omega\tau} \what f(k,\tau)\,d\tau  \right\} dk \nn \\
&  +  \int_{\R} \frac{e^{ikx}}{2i\omega} 
\bigg(
e^{i\omega t} \left\{  \wtil g_3(\omega, t)
 +
 \left[i \gamma \delta k + \gamma\left(1+k^2\right) -ik\left(1+k^2\right)\right] \wtil g_0(\omega, t)
 + ik \wtil \beta(\omega, t) - \left[i \delta k + \left(1+k^2\right)\right] \wtil \alpha(\omega, t)\right\}  \nn \\
&  -  
e^{-i\omega t} \left\{  \wtil g_3(-\omega, t)
 +
 \left[i \gamma \delta k + \gamma\left(1+k^2\right) -ik\left(1+k^2\right)\right] \wtil g_0(-\omega, t)
 + ik \wtil \beta(-\omega, t) - \left[i \delta k + \left(1+k^2\right)\right] \wtil \alpha(-\omega, t)\right\} \bigg) dk, \nn 
}
which is not a solution formula as it contains two unknown quantities, namely the transforms $\wtil{g}_3$ and $\wtil{g}_0$ of the unknown boundary values $u_{xxx}(0, t)$ and $u(0, t)$. However, these unknown transforms can be eliminated from \eqref{ir} by combining certain symmetries of $\omega$ with appropriate deformations in the complex plane.

\setlength{\multicolsep}{1mm}
\setlength{\columnsep}{-3mm}
\begin{multicols}{2}
First, we note that the integrand of the last $k$ integral in \eqref{ir} is entire. For example,  the term
$$
\frac{e^{i\omega t}\wtil g_3(\omega,t)-e^{-i\omega t}\wtil g_3(-\omega,t)}{2i\omega}
=
\int_0^t  \frac{\sin(\omega(t-t'))}{\omega} \, g_3( t') dt'
$$
is continuous across $\mathcal B$ since $\omega$   changes sign across $\mathcal B$ and the sine function is odd. Thus, we can employ Cauchy's theorem to deform the contour of integration of the last $k$ integral in \eqref{ir} from $\R$ to a complex contour $\Gamma$ bypassing $\mathcal B$ from above, as shown in Figure \ref{fig:1}.
\columnbreak

\begin{Figure}
\begin{center}
\resizebox{5.8cm}{3.4cm}{
\begin{tikzpicture}[>=Stealth]
    \draw[<->, thick] (-3.2,0) -- (3.2,0) node[below] {Re($k$)};
    \draw[<->, thick] (0,-2) -- (0,2.5) node[right] {Im($k$)};
    \node[below left] at (0,0) {0};
        \draw[thick] (-0.05, 1.2) -- (0.05, 1.2) node[left=5pt] {$i$}; 
     \draw[thick] (-0.05, -1.2) -- (0.05, -1.2) node[left=5pt] {$-i$}; 
      \draw[red, thick, branch cut] (0,1.2) -- (0,-1.2);
    \draw[blue, very thick, decoration={
        markings,
        mark=at position 0.15 with {\arrow{>}},
        mark=at position 0.45 with {\arrow{>}},
        mark=at position 0.65 with {\arrow{>}},
        mark=at position 0.85 with {\arrow{>}}},
        postaction={decorate}]
        (-2.8, 0.01) -- (-0.5, 0.01) 
        .. controls (-0.02, 0.05) and (-0.3, 1.9) .. (0, 1.9) 
        .. controls (0.3, 1.9) and (0.01, 0.1) .. (0.5, 0.01) 
        -- (2.8, 0.01); 
    \node[blue] at (-2.2, 0.4) {$\Gamma$};
    \begin{scope}[shift={(2.4,2.0)}]
        \draw[thick] (0,0.4) -- (0,0) -- (0.4,0);
        \node at (0.2, 0.24) {$k$};
    \end{scope}
\end{tikzpicture}
}
\end{center}
\vspace*{-2.5mm}
\captionof{figure}{Deformation from $\R$ to $\Gamma$.}
 \label{fig:1} 
\end{Figure}
\end{multicols}

Next, we observe that  $\omega(-k) = \omega(k)$ and $\omega(\pm \nu(k)) = -\omega(k)$, where $\nu(k) : = i \left(1+k^2\right)^{\frac12}$. Moreover, by the definition \eqref{bcut}, for $|k|\gg 1$ we have $\nu(k)\simeq ik$ and $\omega(k) \simeq k^2$ so, in turn,  
$
\text{Im}(\nu)
\simeq
\text{Re}(k)
$
and
$\text{Re}(i\omega(k)) 
\simeq 
-2 \text{Re}(k)\, \text{Im}(k)$.
Hence, combining Cauchy's theorem with exponential decay, we further deform the contour $\Gamma$ to either $\p D_1$ (in the case of the transforms involving $-\omega$) or $\p D_2$ (in the case of the transforms involving $\omega$), as shown in Figure \ref{fig:0}, resulting in 
\aaa{\label{IR}
&u(x,t)= \frac{1}{2\pi} \int_{\R} \frac{e^{ikx}}{2i\omega}\left\{e^{i\omega t}\left[\what{u}_1(k)+ i\omega \what{u}_0(k)\right\}-e^{-i\omega t}\left\{ \what{u}_1(k)-i\omega \what{u}_0(k)\right]\right\}dk \\
& \quad +  \frac{1}{2\pi} \int_{\R} \frac{e^{ikx}}{2i\omega} \left\{ e^{i\omega t } \int_0^t e^{-i\omega\tau} \what f(k,\tau)\,d\tau - e^{-i\omega t} \int_0^t e^{i\omega\tau} \what f(k,\tau)\,d\tau  \right\}dk \nn \\
& \quad +\frac{1}{2\pi} \int_{\p D_2} \frac{e^{ikx+i\omega t}}{2i\omega} \left\{  \wtil g_3(\omega, t)
 +
 \left(i \gamma \delta k - \gamma \nu^2 + ik\nu^2\right) \wtil g_0(\omega, t)
 + ik \wtil \beta(\omega, t) - \left(i \delta k -\nu^2\right) \wtil \alpha(\omega, t)\right\} dk \nn \\
& \quad - \frac{1}{2\pi} \int_{\p D_1} \frac{e^{ikx-i\omega t}}{2i\omega}  \left\{  \wtil g_3(-\omega, t)
 +
 \left(i \gamma \delta k - \gamma \nu^2 + ik\nu^2\right) \wtil g_0(-\omega, t)
 + ik \wtil \beta(-\omega, t) - \left(i \delta k - \nu^2\right) \wtil \alpha(-\omega, t)\right\} dk. \nn 
}

Now, combining the aforementioned symmetries of $\omega(k)$ with the global relation \eqref{gr0} while noting that $\nu(-k) = -\nu(k)$ and $\nu(\pm \nu(k)) = \mp k$, we obtain the following three additional spectral identities:
\aaa{\label{gr-k}
\what u (-k,t)
&= \frac{e^{i\omega t}}{2i\omega} \left(\what{u}_1(-k)+ i\omega \what{u}_0(-k)\right) - \frac{e^{-i\omega t}}{2i\omega} \left( \what{u}_1(-k)-i\omega \what{u}_0(-k)\right) 
\\
&\quad
+  \frac{e^{i\omega t} }{2i\omega}\int_0^t e^{-i\omega\tau} \what f(-k,\tau)\,d\tau -\frac{e^{-i\omega t}}{2i\omega} \int_0^t e^{i\omega\tau} \what f(-k,\tau)\,d\tau 
\nn\\
& \quad + \frac{e^{i\omega t}}{2i\omega} \left[  \wtil g_3(\omega, t)
-
 \left(i \gamma \delta k + \gamma \nu^2 + ik \nu^2\right) \wtil g_0(\omega, t)
 - ik \wtil \beta(\omega, t) + \left(i \delta k +\nu^2\right) \wtil \alpha(\omega, t)\right] \nn\\
& \quad -\frac{e^{-i\omega t}}{2i\omega}  \left[  \wtil g_3(-\omega, t)
-
 \left(i \gamma \delta k + \gamma \nu^2 +ik\nu^2\right) \wtil g_0(-\omega, t)
 - ik \wtil \beta(-\omega, t) + \left(i \delta k +\nu^2\right) \wtil \alpha(-\omega, t)\right], \quad \text{Im}(k)\geq 0,
 \nn\\[2mm]
\label{gr+nu}
\what u (\nu,t)
&= \frac{e^{i\omega t}}{2i\omega} \left(\what{u}_1(\nu)+ i\omega \what{u}_0(\nu)\right) - \frac{e^{-i\omega t}}{2i\omega} \left( \what{u}_1(\nu)-i\omega \what{u}_0(\nu)\right) +  \frac{e^{i\omega t} }{2i\omega}\int_0^t e^{-i\omega\tau} \what f(\nu,\tau)\,d\tau -\frac{e^{-i\omega t}}{2i\omega} \int_0^t e^{i\omega\tau} \what f(\nu,\tau)\,d\tau \nn\\
& \quad + \frac{e^{i\omega t}}{2i\omega} \left[  \wtil g_3(\omega, t)
 +
 \left(i \gamma \delta \nu - \gamma k^2  + i\nu k^2\right) \wtil g_0(\omega, t)
 + i\nu \wtil \beta(\omega, t) - \left(i \delta \nu - k^2\right) \wtil \alpha(\omega, t)\right] \\
& \quad -\frac{e^{-i\omega t}}{2i\omega}  \left[  \wtil g_3(-\omega, t)
 +
 \left(i \gamma \delta \nu - \gamma k^2 + i\nu k^2\right) \wtil g_0(-\omega, t)
 + i\nu \wtil \beta(-\omega, t) - \left(i \delta \nu -k^2\right) \wtil \alpha(-\omega, t)\right], \quad \text{Im}(\nu)\leq 0,
 \nn\\[2mm]
 \label{gr-nu}
\what u (-\nu,t)
&= \frac{e^{i\omega t}}{2i\omega} \left(\what{u}_1(-\nu)+ i\omega \what{u}_0(-\nu)\right) - \frac{e^{-i\omega t}}{2i\omega} \left( \what{u}_1(-\nu)-i\omega \what{u}_0(-\nu)\right) 
\nn\\
& \quad
+  \frac{e^{i\omega t} }{2i\omega}\int_0^t e^{-i\omega\tau} \what f(-\nu,\tau)\,d\tau -\frac{e^{-i\omega t}}{2i\omega} \int_0^t e^{i\omega\tau} \what f(-\nu,\tau)\,d\tau \nn\\
& \quad + \frac{e^{i\omega t}}{2i\omega} \left[  \wtil g_3(\omega, t)
-
 \left(i \gamma \delta \nu + \gamma k^2  + i\nu k^2\right) \wtil g_0(\omega, t)
 - i\nu \wtil \beta(\omega, t) + \left(i \delta \nu + k^2\right) \wtil \alpha(\omega, t)\right] \\
& \quad -\frac{e^{-i\omega t}}{2i\omega}  \left[  \wtil g_3(-\omega, t)
-
 \left(i \gamma \delta \nu + \gamma k^2 + i \nu k^2\right) \wtil g_0(-\omega, t)
- i\nu \wtil \beta(-\omega, t) + \left(i \delta \nu + k^2\right) \wtil \alpha(-\omega, t)\right], \quad \text{Im}(\nu)\geq 0.
\nn
}
Employing \eqref{gr-k}, \eqref{gr+nu} for the integral along $\p D_2$ and \eqref{gr-k}, \eqref{gr-nu} for the integral along $\p D_1$, we obtain two linear systems for $\wtil g_3, \wtil g_0$ which can be solved to yield
 the following solution formula for the linearized ``good'' Boussinesq problem \eqref{lgbous}: 
\aaa{\label{T-sol}
u(x,t)&= \frac{1}{2\pi} \int_{\R} \frac{e^{ikx}}{2i\omega}\left\{e^{i\omega t}\left[\what{u}_1(k)+ i\omega \what{u}_0(k)\right]-e^{-i\omega t}\left[ \what{u}_1(k)-i\omega \what{u}_0(k)\right]\right\}dk 
\nn\\ 
& \quad +\frac{1}{2\pi} \int_{\p D_2}  \frac{e^{ikx+i\omega t}}{2i\omega} \left\{\frac{(k-\nu)\left[\gamma(k+\nu)+i (\gamma\delta - k\nu )\right]}{\Delta_2(k)(k+\nu)} \left[i\omega \what{u}_0(-k)+\what{u}_1(-k)\right]\right\}\,dk
\nn\\
& \quad -\frac{1}{2\pi} \int_{\p D_2}  \frac{e^{ikx+i\omega t}}{2i\omega} \left\{ \frac{2ik(\nu^2+\gamma \delta)}{\Delta_2(k)(k+\nu)} \left[ i\omega \what{u}_0(\nu)+\what{u}_1(\nu)\right]\right\}\,dk 
\nn\\
& \quad + \frac{1}{2\pi} \int_{\p D_1}  \frac{e^{ikx-i\omega t}}{2i\omega} \left\{ \frac{(k+\nu)\left[\gamma(k-\nu)+i(\gamma\delta+k\nu)\right]}{\Delta_1(k)(k-\nu)} \left[i\omega \what{u}_0(-k)-\what{u}_1(-k)\right]\right\}\,dk
\nn\\
& \quad -\frac{1}{2\pi} \int_{\p D_1}  \frac{e^{ikx-i\omega t}}{2i\omega} \left\{\frac{2ik(\nu^2+\gamma \delta)}{\Delta_1(k)(k-\nu)} \left[ i\omega \what{u}_0(-\nu)-\what{u}_1(-\nu)\right]\right\}\,dk 
\nn\\
& \quad +  \frac{1}{2\pi} \int_{\R} \frac{e^{ikx}}{2i\omega} \left\{e^{i\omega t } \int_0^t e^{-i\omega\tau} \what f(k,\tau)\,d\tau - e^{-i\omega t} \int_0^t e^{i\omega\tau} \what f(k,\tau)\,d\tau  \right\}dk 
\nn\\
& \quad +\frac{1}{2\pi} \int_{\p D_2} \frac{e^{ikx+i\omega t}}{2i\omega} \left\{\frac{(k-\nu)\left[\gamma(k+\nu)+i (\gamma\delta - k\nu )\right]}{\Delta_2(k)(k+\nu)} \int_0^t e^{-i\omega\tau} \what{f}(-k,\tau)\,d\tau \right\}\,dk 
\nn\\
& \quad -\frac{1}{2\pi} \int_{\p D_2}  \frac{e^{ikx+i\omega t}}{2i\omega} \left\{ \frac{2ik(\nu^2+\gamma \delta)}{\Delta_2(k)(k+\nu)} \int_0^t e^{-i\omega\tau} \what{f}(\nu,\tau)\,d\tau \right\}\,dk 
\nn\\
& \quad - \frac{1}{2\pi} \int_{\p D_1}  \frac{e^{ikx-i\omega t}}{2i\omega} \left\{ \frac{(k+\nu)\left[\gamma(k-\nu)+i(\gamma\delta+k\nu)\right]}{\Delta_1(k)(k-\nu)} \int_0^t e^{i\omega\tau} \what{f}(-k,\tau)\,d\tau\right\}\,dk
\nn\\
& \quad +\frac{1}{2\pi} \int_{\p D_1}  \frac{e^{ikx-i\omega t}}{2i\omega} \left\{\frac{2ik(\nu^2+\gamma \delta)}{\Delta_1(k)(k-\nu)}  \int_0^t e^{i\omega\tau} \what{f}(-\nu,\tau)\,d\tau\right\}\,dk 
\nn\\
& \quad -\frac{1}{2\pi} \int_{\p D_2}  \frac{e^{ikx+i\omega t}}{i\omega}\cdot \frac{k(k-\nu)}{\Delta_2(k)} \left[(\nu+i\gamma) \wtil{\beta}(\omega,T) +i\nu(\nu+i\delta)\wtil{\alpha}(\omega,T)\right]\,dk
\nn\\
& \quad -\frac{1}{2\pi} \int_{\p D_1}  \frac{e^{ikx-i\omega t}}{i\omega} \cdot \frac{k(k+\nu)}{\Delta_1(k)} 
\left[
(\nu-i\gamma)\wtil{\beta}(-\omega,T) - i\nu(\nu-i\delta)\wtil{\alpha}(-\omega,T)
\right]\,dk
}
where we note that the radius $a$ associated with the contours $\p D_1, \p D_2$ in Figure \ref{fig:0} is chosen sufficiently large to ensure that the (finitely many) zeros of the quantities
\eee{
 \Delta_1(k) :=  -\gamma (k+\nu) + i (\gamma \delta -k\nu),
\quad
\Delta_2(k) := -\gamma (k-\nu) + i(\gamma \delta + k\nu)
\label{Del-2}
}
lie inside the circle of radius $a$ and center at $0$. Furthermore, thanks to exponential decay inside the regions enclosed by $\p D_1$ and $\p D_2$, we have been able to replace $t$ by the fixed value $T > t$ in the second argument of the transforms $\wtil a$ and $\wtil \beta$. Finally, analyticity and exponential decay inside those regions has allowed us to show that the integrals that arise from \eqref{gr-k}-\eqref{gr-nu} and involve the unknown quantities $\what u(-k, t), \what u(\pm \nu, t)$ vanish. 

For example, by Cauchy's theorem, we have
\eee{
\int_{\p D_1} 
e^{ikx} \, 
 \frac{k^2(i\nu+\gamma)+i\gamma\delta\nu}{\Delta_1(k)(k-\nu)} \, \what u(-k, t) \, dk 
 = \lim_{R\to\infty} \int_{C_R} e^{ikx} \, 
 \frac{k^2(i\nu+\gamma)+i\gamma\delta\nu}{\Delta_1(k)(k-\nu)} \, \what u(-k, t) \, dk =: \lim_{R\to\infty} J_R(x, t),
}
where $C_R$ is the quarter-circle of radius $R$ in the first quadrant. For $|k|\gg 1$, we have $\nu \simeq ik$, $\Delta_1 \simeq k^2$, thus 
\aaa{
|J_R(x, t)| 
&\lesssim \int_{C_R} e^{-\text{Im}(k) x}    |\what u(-k, t)|\,|dk| = \int_{C_R} e^{-\text{Im}(k) x}    \left|\int_0^\infty e^{iky} u(y, t) dy \right| 
|dk|
 \nn\\
 &\leq  |u(0, t)| 
	\int_{C_R} 
	\frac{e^{-\text{Im}(k) x}}{|k|}\, |dk| +
	\int_{C_R}
	\frac{e^{-\text{Im}(k) x}}{|k|}
	\int_0^\infty 
	e^{-\text{Im}(k) y} |u'(y, t)| dy\,|dk|  
}
after also integrating by parts. Then, parametrizing $C_R$ via $k=R e^{i\theta}, 0 \leq \theta \leq \frac{\pi}{2}$ and using Fubini's theorem, we deduce via the inequality $\sin \theta \geq \frac{2\theta}{\pi}$ for $0 \leq \theta \leq \frac{\pi}{2}$  that 
\aaa{
|J_R(x, t)| 
	&\lesssim
	|u(0, t)| 
	\int_0^{\frac{\pi}{2}}
	e^{-R x \sin \theta}\,d\theta + 
	\int_0^\infty |u'(y, t)| 
	\int_0^{\frac{\pi}{2}}
	e^{-R (x+y) \sin\theta }\,d\theta\,dy 
\nn\\
&\leq
|u(0, t)|  
\int_0^{\frac{\pi}{2}}
e^{- \frac{2Rx}{\pi}\,\theta }\,d\theta 
+
\int_0^\infty |u'(y, t)| 
\int_0^{\frac{\pi}{2}}
e^{- \frac{2R(x+y)}{\pi}\,\theta }\,d\theta \, dy
\nn\\
&=
|u(0, t)|  \, 
 \frac{\pi}{2Rx} \left[1-  e^{-Rx} \right] 
 +
 \int_0^\infty |u'(y, t)| \, \frac{\pi}{2R(x+y)} \left[1-  e^{-R(x+y)} \right] dy,
}
thus concluding that $\displaystyle \lim_{R\to\infty} J_R(x, t) = 0$ for each $x \in (0, \infty)$ and $t\in (0, T)$, as desired.

\bibliographystyle{myamsalpha}
\bibliography{references_DM}

\end{document}